\newtheorem{theorem}{Theorem}[section]
\newtheorem{lemma}[theorem]{Lemma}
\newtheorem{proposition}[theorem]{Proposition}
\newtheorem{definition}[theorem]{Definition}
\newtheorem{remark}[theorem]{Remark}
\newenvironment{proof}[1][Proof:]{\begin{trivlist}
\item[\hskip \labelsep {\bfseries #1}]}{\end{trivlist}}
\newcommand{\qed}{\nobreak \ifvmode \relax \else
      \ifdim\lastskip<1.5em \hskip-\lastskip
      \hskip1.5em plus0em minus0.5em \fi \nobreak
      \vrule height0.75em width0.5em depth0.25em\fi}
\newcommand{\R}{\mathbb{R}}
\newcommand{\N}{\mathbb{N}}
\begin{document}

\thispagestyle{empty}

%\maketitle

\begin{center}

\LARGE\textbf{Optimal Reliability in Design for Fatigue Life}\\\vspace{0.5cm}

%\vspace{0.1cm}

\Large\textbf{Part I -- Existence of Optimal Shapes}\\

\vspace{0.9cm} \noindent{Hanno Gottschalk\footnote{Bergische
Universit\"at Wuppertal, Fachbereich Mathematik und
Naturwissenschaften, Email: hanno.gottschalk@uni-wuppertal.de} and Sebastian
Schmitz\footnote{Universita della Svizzera Italiana, Istituto di
Scienze Computazionali,  Email: sebastian.schmitz@usi.ch
and
Siemens AG Energy, M\"ulheim an
der Ruhr, Germany, Email: schmitz.sebastian@siemens.com}}\\
\end{center}

\noindent{\small{\bf Abstract:} The failure of a component often
is the result of a degradation process that originates with the
formation of a crack. Fatigue describes the crack formation in the
material under cyclic loading. Activation and deactivation
operations of technical units are important examples in
engineering where fatigue and especially low-cycle fatigue (LCF)
play an essential role. A significant scatter in fatigue life for
many materials results in the necessity of advanced probabilistic
models for fatigue. Moreover, optimization of reliability is of
vital interest in engineering, where with respect to fatigue the
cost functionals are motivated by the predicted probability for
the integrity of the component after a certain number of load
cycles. The natural mathematical language to model failure, here
understood as crack initiation, is the language of spatio-temporal
point processes and their first failure times.
The local crack formation intensities thereby need to be modeled
as a function of local stress states and thus as a function of the
derivatives of the displacement field $u$ obtained as the solution
to the PDE of linear elasticity. This
translates the problem of optimal reliability in the framework of
shape optimization.
The cost functionals derived in this way for realistic optimal
reliability problems are too singular to be $H^1$-lower
semi-continuous as many damage mechanisms, like LCF, lead to crack
initiation as a function of the stress at the component's surface.
Realistic crack formation models therefore impose a new challenge
to the theory of shape optimization. In this work, we have to
modify the existence proof of optimal shapes, for the
case of sufficiently smooth shapes using elliptic regularity,
uniform Schauder estimates and compactness of strong solutions via
the Arzela-Ascoli theorem. This result applies to a variety of
crack initiation models and in particular applies to a recent
probabilistic model for LCF.}

\vspace{.2cm}

\noindent{\bf MSC (2010):} 49Q10, 60G55

\section{Introduction}
A design being made from material will fail if the material
degradation due to loading exceeds certain limits. Reliability,
i.e. the absence of failure, is thus the ultimate goal of
structural design.  Whether a design will operate safely under
certain load conditions depends on the failure mechanisms which
are very diverse for different material classes and operation
conditions. Degradation can occur as a function of operating time,
which is e.g. the case for creep damage. Or it can occur when
small plastic deformations under cyclic loading pile up and result
in a crack. This is called fatigue which can be differentiated
into high-cycle fatigue (HCF) and low-cycle fatigue (LCF)
\cite{Harders_Roesler,Vormwald}.

A common feature of crack initiation over diverse classes of
material and damage mechanisms is its probabilistic nature, see
e.g.\ \cite{Vormwald}. A natural mathematical language to capture
the random times and locations of crack initiation is the language
of point processes \cite{Kallenberg}. Associated to a
spatio-temporal point process is a failure time, which is well
known from the theory of renewable models in reliability
statistics \cite{Escobar_Meeker}.  The probability that the time
to failure -- here understood as the time that passes until the
formation of the first crack -- will be larger than a certain
warranty time or service interval length $t^*$ is an important
quantity in engineering. The maximization of this quantity sets
the problem of optimal reliability.

A central part of engineering is about the choice of the form
$\Omega\subseteq\R^3$ of the component. Given material, surface
and volume loads that act on the component, it is desirable to
optimize failure and survival probabilities for the component at
$t^*$ as a function of $\Omega$. This embeds the quest of optimal
reliability into the established field of shape optimization
\cite{ABFJ,Arora,BS,Bucur_Buttazzo,DZ,Haug_Arora,Epp,Shape_Optimization_Haslinger,Sokolowski_Zolesio}
or synonymously PDE constrained optimization with shape control.
The natural choice for the  PDE is the PDE of (linear and
elliptic) elasticity
\cite{Braess,Elasticity_Ciarlet_1,Evans,Finite_Elements_Ern}.

In \cite{ABFJ,AJ,AK1,DZ}, the authors show generalized optimal
shapes with elasticity state equation by the homogenization
method, which translates shape optimization to a sizing problem.
In particular this allows compliance optimization, which otherwise
leads to an ill-posed problem. The usage of generalized shapes in
design however poses some additional problems. The existence
results in \cite{Shape_Optimization_Haslinger} and references
therein are based on shapes that fulfil some compactness
properties and cost functionals with lower $H^1$-semicontinuity.
Fujii \cite{Fujii} gives an interesting method to prove lower
semicontinuity for convex cost functionals given by volume
integrals of derivatives of the solution $u$ with respect to weak
$H^1$-topologies for scalar elliptic PDE. Although we essentially
follow  the book by M\"akinen and Haslinger, we are able do deal
with cost functionals that live on the boundary and depend on
stress or even higher derivatives on the bulk or boundary, which
is not covered in \cite{Shape_Optimization_Haslinger}. Also, we do
not require any convexity assumption. Further results on optimal
shapes are e.g.\ \cite{EU} dealing with the case of quasi-linear
cost functionals and \cite{LNT} for quadratic cost functionals not
depending on derivatives of the solution. For results of
eigenvalue optimization see \cite{DZ} and references given there.

In particular, cost functionals motivated by reasonable point
process models of the material failure mechanisms however fall
into this singular class. This is due to the fact that many
failure mechanisms are stress driven and generate surface cracks,
as it is the case for LCF. The associated crack intensities thus
depend locally on $\nabla u$ restricted to $\partial\Omega$, which
is an operation that is not well defined for the weak
$H^1$-solutions $u$ to the elasticity PDE. As a consequence, the
theory of weak solutions as in \cite{Shape_Optimization_Haslinger}
is insufficient for our purposes and the shape optimization
formalism needs to be extended to strong solutions with the help
of elliptic regularity theory
\cite{Agmon_Douglis_Nirenberg_1,Agmon_Douglis_Nirenberg_2,Evans,Elliptic_PDE_Gilbarg}.
This makes it necessary to revise existing proofs for the
existence of optimal shapes with a PDE constraint given by linear
elasticity. In particular, stronger $C^4$-boundary regularity is
considered that is not needed in the articles cited above.

Note that in the first sections, we treat the stochastic subject
of optimal reliability which leads to cost functionals that are
given by volume and surface integrals. In Section
\ref{section_SO_admissible_domains} to
\ref{section_optimal_reliability}, we focus on a class of shape
optimization problems which include the objective of optimal
reliability under linear elastic PDE constraints. Readers only
interested in the general theory of shape optimization can also
read these sections independently.
 The main result regarding the existence of
optimal shapes is Theorem \ref{theorem_existence_optimal_shape}.
The paper is organized as follows:

In Section \ref{section_stochastics_poisson_distribution}, we
review the theory of point processes essentially following
\cite{Kallenberg} with applications to reliability statistics
\cite{Escobar_Meeker} and formulate the optimal reliability
problem. Here, the notions of crack initiation models and their
associated failure times and probabilities are crucial. In
particular, if interaction between cracks is neglected, the
Poisson point process (PPP) plays a central role.  Here, we
connect the topic of optimal reliability to shape optimization
with the elasticity PDE as state equation and classify PPP models
according to their singularity. In Section \ref{section_prob_lcf},
we introduce a recently proposed and experimentally validated
crack initiation model for LCF in polycrystalline metal
\cite{SSBKRG}. This model has been applied to gas turbine
compressor design \cite{SKRG1} and extended to themomechanical
design of cooled gas turbine blades \cite{SKRG2}. Here, we give a
detailed account of its mathematical properties and show that it
fits to the framework of Section
\ref{section_stochastics_poisson_distribution}.

Section \ref{section_SO_admissible_domains} sets the stage for
shape optimization essentially following
\cite{Shape_Optimization_Haslinger} with an emphasis on uniform
$C^k$-regularity of shapes in the local epigraph parametrization
of shapes
\cite{DZ,Shape_Optimization_Haslinger,Sokolowski_Zolesio} with
$k=4$.  The following Section \ref{section_schauder_estimates}
treats existence and regularity of strong solutions to the
elasticity PDE based on regularity theory for elliptic systems
\cite{Agmon_Douglis_Nirenberg_1,Agmon_Douglis_Nirenberg_2}. In
order to check uniform regularity we revisit several results in
this field and check uniformity of estimates as a function of our
parametrization of shapes. In conclusion we are able to show that
the solutions to the elasticity PDE on admissible domains $\Omega$
fulfill uniform $[C^{3,\phi}(\Omega)]^3$-bounds where this space
stands for $3$-times differentiable functions with $\phi$-H\"older
continous third derivarive, $\phi\in (0,1)$. We also show that
such functions can be extended to $[C^{3,\phi}( \Omega^{\rm
ext})]^3$ with $\Omega^{\rm ext}\subseteq \R^3$ being some bounded
Lipschitz domain that contains all admissible shapes $\Omega$.
This section contains the technical core of this paper.

In the final Section \ref{section_optimal_reliability}, we derive
our main result regarding the existence of optimal shapes under
linear elastic PDE constraints (Theorem
\ref{theorem_existence_optimal_shape}). Here, we apply the results
of Section \ref{section_schauder_estimates} to the framework of
shape optimization developed in Section
\ref{section_SO_admissible_domains}. By the Arzela-Ascoli theorem,
balls in $[C^{3,\phi}(\Omega)]^3$ are compact. It is therefore
easy to show that the graph ${\cal G}$ that consists of pairs
$(\Omega,u(\Omega))$ of admissible shapes -- endowed with a
suitable $C^4$-topology -- and the associated solution to the
elasticity state equation with $C^{3,\phi}$-topology  possesses
the compactness of ${\cal G}$ required in the general shape
optimization formalism \cite{Shape_Optimization_Haslinger}. For a
minimizing sequence $(\Omega_n)_{n\in\mathbb{N}}$ of admissible
shapes with respect to a cost functional $J(\Omega,u(\Omega))$,
the implied $[C^{3,\phi}_0(\Omega^{\rm ext})]^3$-convergence of a
subsequence of extended solutions $(u^{\rm
ext}(\Omega_{n_k}))_{n_k\in\mathbb{N}}$ makes it easy to check the
(lower semi-) continuity properties for a large class of rather
singular cost functionals $J(\Omega,u)$. This concludes the proof
of existence of optimal admissible shapes. In particular, our
proof covers a large class of cost functionals that include
optimal reliability problems introduced in Sections
\ref{section_stochastics_poisson_distribution} and the
probabilistic LCF model from Section \ref{section_prob_lcf}, in
particular.

\section{A Mathematical Setting for Optimal Reliability}\label{section_stochastics_poisson_distribution}

Let us consider a bounded, open domain $\Omega\subset \R^3$ with
Lipschitz boundary $\partial\Omega$. Let $\Omega$ be interpreted
as the portion of the space filled with matter, i.e.\ the shape of
the component. Let $\mathscr{T}=\N_{0}$ or $=\bar \R_+$ be the
time axis, where time is either measured in discrete units (load
cycles) of in natural time. Let $dt$ denote either the continuous
or discrete Lebesgue measure on $\mathscr{T}$.

We define $\mathscr{C}=\mathscr{T}\times \bar\Omega$ as the
configuration space of crack initiations at time $t\in\mathscr{T}$
and at location $x\in\bar\Omega$ and endow $\mathscr{C}$ with the
standard metric topology.

Let $\mathscr{R}=\mathscr{R}(\mathscr{C})$ be the space of Radon
measures on $\mathscr{C}$, i.e.\ the set of measures $\gamma$ on
the measurable space $(\mathscr{C},\mathscr{B}(\mathscr{C}))$ such
that $\gamma(A)<\infty$ for $A\in \mathscr{B}(\mathscr{C})$
bounded. $\mathscr{B}(\mathscr{C})$ denotes the Borel sigma
algebra of the topological space $\mathscr{C}$. By $\mathscr{R}_c$
we denote the counting measures, i.e. the Radon measures
$\gamma\in\mathscr{R}$ such that $\gamma(B)\in\N_0$ for bounded,
measurable $B\subseteq \mathscr{C}$. In the given context
$\gamma\in\mathscr{R}_c$ encodes one particular history of
(multiple) crack initiations on the component $\bar\Omega$.  Given
such a history and some $B\in\mathscr{B}(\mathscr{C})$,
$\gamma(B)$ gives the number of cracks that initiated with
time-location instances $c=(t,x)\in B$. Note that
$\partial\Omega\subset \bar \Omega$, thus surface crack formation
-- as in the case of LCF -- can be modeled by measures $\gamma$
with support in $\mathscr{T}\times \partial\Omega\subseteq
\mathscr{C}$.

The occurrence of the first crack on $\bar \Omega$ will be
interpreted as a failure event. For the crack initiation history
$\gamma$, we define the failure time
$\tau:\mathscr{R}_c\to\mathscr{T}^\bullet=\mathscr{T}\cup\{\infty\}$
as
\begin{equation}
\tau(\gamma)=\inf\{ t>0: \gamma(\mathscr{C}_t)>0\},
\end{equation}
where $\mathscr{C}_t=\{(\tau,x)\in  \mathscr{C}:\tau\leq t\}$.
For $B\in\mathscr{B}(\mathscr{C})$ with bounded diameter, the
restriction of $\gamma\in \mathscr{R}_c$ to $B$ can be written as
(see \cite[Chapter 2]{Kallenberg})
\begin{equation}
\label{decompB}
\gamma\restriction_B=\sum_{j=1}^nb_j\delta_{c_j},~~c_j\in\mathscr{C}, c_i\not=c_j\mbox{ for }i\not=j , ~b_j\in\N.
\end{equation}
This decomposition is unique up to order. $\delta_c$ stands for
the Dirac measure in $c$. The Radon counting measure $\gamma$ is
called simple, if $\forall B\in\mathscr{B}(\mathscr{S})$ in
(\ref{decompB}) we have $b_j=1$ for all $j=1,\ldots,n$. The
simplicity of crack initiation histories $\gamma$ is a natural
condition, namely two cracks that originate at the same time at
the same place are considered as the same crack.

Next we have to take into account that crack initiation is a
random process. By $\mathscr{N}(\mathscr{R}_c)$ we denote the
standard sigma algebra on the space of Radon counting measures
generated by the mappings $\gamma\mapsto \int_{\mathscr{C}}f\,
d\gamma$ with $f\in C^0(\mathscr{C})$, the space of compactly
supported continuous functions on $\mathscr{C}$. The failure time
$\tau
:(\mathscr{R}_c,\mathscr{N}(\mathscr{R}_c))\to(\mathscr{T},\mathscr{B}(\mathscr{T}))$
is easily seen to be measurable. The following definition can be
found in \cite{Kallenberg}:

\begin{definition}
\label{pointprocess} \textbf{(Point Process)}\\
{\rm Let
$(\mathscr{X},\mathscr{A},P)$ be a probability space.
\begin{itemize}
\item[(i)] A point process on $\mathscr{C}$ is a measurable mapping $\gamma :(\mathscr{X}\mathscr{A},P)\to(\mathscr{R}_c,\mathscr{N}(\mathscr{R}_c))$.
\item[(ii)] The point process $\gamma$ is simple, if $\gamma(.,\omega)$ is simple  for $P$-almost all $\omega\in\mathscr{X}$.
\item[(iii)] A point process $\gamma$ is non-atomic, if $P(\gamma(\{c\})>0)=0$ $\forall c\in\mathscr{C}$.
\item[(iv)] A point process $\gamma$ has independent increments, if for $B_1,\ldots,B_n\in\mathscr{B}(\mathscr{C})$ mutually disjoint, the random variales $\gamma(B_1),\ldots,\gamma(B_n)$ are independent.
\end{itemize}
}
\end{definition}
Random crack initiation histories are naturally modeled as simple
point processes. The interpretation of the additional assumption
that $\gamma$ does not posses 'atoms' is that there is no location
$x\in\bar\Omega$ such that a crack will originate exactly in $x$
with a probability larger than zero.

\begin{definition}
\label{crackformationprocess} \textbf{(Crack Initiation Process)}
\begin{itemize}
\item[(i)] A crack initiation process $\gamma$ is a simple, non-atomic point process on $\mathscr{C}$.
\item[(ii)] The time to crack initiation $T:\mathscr{X}\to\mathscr{T}^\bullet$ associated with $\gamma$ is the random variable $T=\tau(\gamma)$.
\end{itemize}
\end{definition}

Whether the assumption of independent increments is realistic for
random crack initiation can be disputed. This approach however
should be valid if we are only interested in the component's
history until the formation of the first crack initiation. For a
model of interacting crack networks, see e.g. \cite{MVH}.

We now apply some standard results from the theory of point
processes to the present context:

\begin{proposition}
\label{Poisson_Point_Process} \textbf{(Crack Initiation Processes
and Poisson Point Processes)}
\begin{itemize}
\item[(i)] Any crack initiation process $\gamma$ on $\mathscr{C}$ with independent increments is a Poisson point process,
i.e.\ there exists a unique Radon measure $\rho\in\mathscr{R}$ such that
$$
P(\gamma(B)=n)= e^{-\rho(B)}\frac{\rho(B)^n}{n!} ~~\forall B\in\mathscr{B}(\mathscr{C})~\mbox{bounded}.
$$
$\rho$ is called the intensity measure of $\gamma$.

\item[(ii)] The distribution function $F_T$ of the time to crack initiation $T$ is given by $F_T(t)=1-e^{-H(t)}$ with
cumulative hazard function $H(t)=\rho(\mathscr{C}_t)$.

\item[(iii)] If $\rho(\mathscr{C})=\infty$, then $P(T=\infty)=0$ and $T$ can be modified to
$T:(\mathscr{X},\mathscr{A})\to (\mathscr{T},\mathscr{B}(\mathscr{T}))$.
\end{itemize}
\end{proposition}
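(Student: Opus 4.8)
Part (i) is the classical characterisation of a Poisson point process through its independent increments; I would establish it along the lines below, after which (ii) and (iii) reduce to elementary manipulations with the intensity $\rho$.

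For (i), define the candidate intensity by $\rho(B):=-\log P(\gamma(B)=0)$ for bounded $B\in\mathscr{B}(\mathscr{C})$; uniqueness of $\rho$ is then automatic, since the case $n=0$ of the asserted formula forces this identity. Finite additivity of $\rho$ on disjoint bounded sets is immediate from independence of $\gamma(B_1)$ and $\gamma(B_2)$, and continuity of $\rho$ at $\emptyset$ follows because $\gamma\in\mathscr{R}_c$ gives $\gamma(B_n)\downarrow 0$ pointwise for $B_n\downarrow\emptyset$ (with $B_1$ bounded), hence $P(\gamma(B_n)=0)\to 1$; together these yield $\sigma$-additivity on the bounded Borel sets. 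Non-atomicity of $\gamma$ translates directly into $\rho(\{c\})=-\log P(\gamma(\{c\})=0)=0$, so $\rho$ is diffuse, and by $\sigma$-compactness of $\mathscr{C}=\mathscr{T}\times\bar\Omega$ it then extends to a Radon measure once we know it is finite on bounded sets. Finiteness of $\rho$ on bounded $B$ and the Poisson law for $\gamma(B)$ I would obtain simultaneously by a law-of-rare-events argument: along a refining sequence of finite Borel partitions $\{B_i\}$ of $B$ with mesh tending to zero one has $\max_i p_i\to 0$ (by diffuseness of $\rho$ and compactness of $\bar B$), where $p_i:=P(\gamma(B_i)\ge 1)$, while $\sum_i(-\log(1-p_i))=\rho(B)$ is partition-independent; comparing the product formula $E[z^{\gamma(B)}]=\prod_iE[z^{\gamma(B_i)}]$, $z\in[0,1)$, with the elementary two-sided bound $1-(1-z)p_i-zr_i\le E[z^{\gamma(B_i)}]\le 1-(1-z)p_i$ for $r_i:=P(\gamma(B_i)\ge 2)$ shows that $\rho(B)=\infty$ would force $E[z^{\gamma(B)}]=0$ for all $z\in(0,1)$, contradicting $\gamma(B)<\infty$ almost surely; and once $\rho(B)<\infty$, simplicity of $\gamma$ forces $\sum_i r_i\to 0$, so both bounds converge to $e^{-(1-z)\rho(B)}$, i.e.\ $\gamma(B)\sim\mathrm{Poisson}(\rho(B))$. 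The independent increments then make $(\gamma(B_1),\dots,\gamma(B_n))$ a vector of independent $\mathrm{Poisson}(\rho(B_j))$ variables for disjoint bounded $B_j$, which is the defining property of the Poisson point process with intensity $\rho$.

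For (ii), I would first verify $\{T\le t\}=\{\gamma(\mathscr{C}_t)>0\}$ for every $t>0$: ``$\supseteq$'' is immediate from the definition of $\tau$, and for ``$\subseteq$'' one observes that $T\le t$ gives $\gamma(\mathscr{C}_{t+\varepsilon})>0$ for all $\varepsilon>0$, and continuity from above of the Radon measure $\gamma$ along $\mathscr{C}_{t+1/n}\downarrow\mathscr{C}_t$ (these sets being bounded) yields $\gamma(\mathscr{C}_t)=\lim_n\gamma(\mathscr{C}_{t+1/n})\ge 1$. Since $\mathscr{C}_t$ is bounded, part (i) applies and gives
\begin{equation*}
F_T(t)=P(T\le t)=P(\gamma(\mathscr{C}_t)>0)=1-e^{-\rho(\mathscr{C}_t)}=1-e^{-H(t)},\qquad H(t):=\rho(\mathscr{C}_t).
\end{equation*}
For (iii), continuity from below of $\rho$ along $\mathscr{C}_t\uparrow\mathscr{C}$ gives $H(t)\uparrow\rho(\mathscr{C})$, hence $P(T=\infty)=\lim_{t\to\infty}P(T>t)=\lim_{t\to\infty}e^{-H(t)}=e^{-\rho(\mathscr{C})}=0$ whenever $\rho(\mathscr{C})=\infty$; thus $T<\infty$ $P$-almost surely, and $T$ may be redefined on the complement of a $P$-null set as a measurable map into $(\mathscr{T},\mathscr{B}(\mathscr{T}))$.

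The single delicate point is the law-of-rare-events passage in (i), specifically showing that the double-occupancy probabilities $\sum_i P(\gamma(B_i)\ge 2)$ vanish along a refining sequence of partitions: a mesh small enough to separate the almost surely finitely many distinct points of $\gamma$ in $B$ makes every $\gamma(B_i)\le 1$ pathwise, and independence of the cell counts converts this pathwise statement into the required convergence; this is where simplicity and non-atomicity of $\gamma$ are essential, and it underlies both the finiteness of $\rho$ and the Poisson law. Everything else is routine bookkeeping with $\sigma$-additivity and continuity of measures, and since the underlying characterisation is standard one may alternatively simply invoke it from \cite{Kallenberg}.
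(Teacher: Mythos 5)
Your argument is correct, but it takes a more self-contained route than the paper at the one place where the paper delegates: for part (i) the paper simply invokes \cite[Corollary 6.7]{Kallenberg}, whereas you reprove the classical characterisation (independent increments plus simplicity and non-atomicity imply Poisson) via the intensity candidate $\rho(B)=-\log P(\gamma(B)=0)$ and a law-of-rare-events limit over refining partitions; your closing remark that one could instead just cite Kallenberg is exactly what the authors do. Two small points on your sketch of (i): the key uniformity $\max_i p_i\to 0$ should be attributed to non-atomicity of $\gamma$ itself (a compactness argument: if $P(\gamma(B(c_k,\delta_k))\geq 1)\geq\varepsilon$ with $\delta_k\to 0$, pass to $c_k\to c$ and use continuity from above of the a.s.\ finite counting measure to contradict $P(\gamma(\{c\})>0)=0$) rather than to ``diffuseness of $\rho$'', since at that stage $\rho(B)<\infty$ is not yet known; and once $\max_i p_i<1$ along a finite partition, finiteness of $\rho(B)=\sum_i(-\log(1-p_i))$ is immediate, so the generating-function contradiction for $\rho(B)=\infty$ is not really needed. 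For (ii) and (iii) you follow essentially the paper's proof, with the useful extra detail of verifying the event identity $\{T\leq t\}=\{\gamma(\mathscr{C}_t)>0\}$ by continuity from above along $\mathscr{C}_{t+1/n}\downarrow\mathscr{C}_t$ (the paper takes this for granted), and the same monotone-limit argument $H(t)\uparrow\rho(\mathscr{C})=\infty$ followed by redefining $T$ on the null set $\{T=\infty\}$. The trade-off: your version is longer but self-contained and makes explicit where simplicity and non-atomicity enter; the paper's is shorter at the price of an external reference.
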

\begin{proof}
Assertion (i) is proven in \cite[Corrollary 6.7]{Kallenberg} and
(ii) then follows from
$F_T(t)=1-P(T>t)=1-P(\gamma(\mathscr{C}_t)=0)=1-e^{-\rho(\mathscr{C}_t)}=1-e^{-H(t)}$.

Finally, consider (iii): If $\rho(\mathscr{C})=\infty$, we have by
lower continuity of radon measures that $H(t)\to\infty$ and thus
$S(t)=e^{-H(t)}=P(T>t)\rightarrow 0$ as $t\to\infty$. This implies
by lower continuity of $P$ that $T<\infty$ holds $P$ almost sure
and we can redefine $T=0$ on the null set $\{T=\infty\}$ without
changing the probability law of $T$.\qed
\end{proof}

The reliability of the component $\Omega$ at some warranty time
$t^*$ or after the passage of a service interval of duration $t^*$
depends on the loads that act on $\Omega$, the material and shape
$\Omega$ itself. As in many design applications the loads and the
material are given, the choice of the shape $\Omega$ is the
crucial design task. A natural question arising is: Is there a
shape with optimal reliability? The answer crucially depends on an
assignment of failure probabilities --  i.e.\ probability of crack
initiation on $\Omega$ until $t^*$ -- to the shape $\Omega$. The
following definition collects some basic requirements:

\begin{definition}
\label{reliability_optimization_problem} \textbf{(Crack Initiation Model)}\\
{\rm Let ${\cal O}$ be some collection of admissible domains
contained in $\Omega^{\rm ext}\subseteq \R^3$ and let
$f,g:\mathscr{C}^{\rm ext}=\mathscr{T}\times\Omega^{\rm ext}\to
\R^3$ be vector fields of some spaces  $\mathscr{V}_{\rm vol}$ and
$\mathscr{V}_{\rm sur}$, respectively. For $\Omega\in{\cal O}$,
$g\restriction_{\partial\Omega}$ is interpreted as the history --
or load collective -- of surface force densities on
$\partial\Omega$ and $f\restriction_\Omega$ as the histroy of
volume force densities on $\Omega$.

A crack initiation model is a mapping $\gamma$ from ${\cal
O}\times\mathscr{V}_{\rm vol}\times \mathscr{V}_{\rm sur} $ to the
space of all crack initiation processes on $\mathscr{T}\times
\Omega^{\rm ext}$ mapping $(\Omega,f,g)$ to $\gamma_{\Omega,f,g}$
such that
\begin{itemize}
\item[(i)]  $\gamma_{\Omega,f,g}(\mathscr{T}\times (\Omega^{\rm ext}\setminus\bar\Omega))=0$ $P$-almost surely;
\item[(ii)]   $\gamma_{\Omega,f,g}$ $P$-almost surely depends
only on $f\restriction_{\mathscr{T}\times\Omega}$ and $g\restriction_{\mathscr{T}\times \partial\Omega}$.
\end{itemize}
}
\end{definition}

Obviously, for any crack initiation model, we obtain an induced
mapping of $(\Omega,f,g)$ to  the associated crack initiation time
random variables $T_{\Omega,f,g}$ associated with
$\gamma_{\Omega,f,g}$. We can now define the optimal reliability
problem, given the fixed load histories $f$ and $g$ and a fixed
time $t^*\in\mathscr{T}$. For notational simplicity, we suppress
$f$ and $g$ dependency in the following.

\begin{definition}
\label{optimal_reliability_problem} \textbf{(Optimal Reliability Problem)}\\
{\rm Given $t^*\in\mathscr{T}$, $f\in\mathscr{V}_{\rm vol}$,
$g\in\mathscr{V}_{\rm sur}$ and a crack initiation model $\gamma$,
find $\Omega^*\in {\cal O}$ such that
$$
P(T_{\Omega^*}\leq t^*)\leq P(T_{\Omega}\leq t^*)~~\forall \Omega\in{\cal O}.
$$
}
\end{definition}

Now, we want to construct crack initiation models with independent
increments based on the PDE of linear isotropic elasticity. This
establishes the link between the optimal reliability problem of
Definition \ref{optimal_reliability_problem} and PDE constrained
shape optimization. For simplicity, we restrict ourselves to the
case that the load vector fields $f$ and $g$ are independent of
$t$ such that our model is based on one well defined load cycle,
see Figure \ref{Figure_cyclic_loading}. The time $t$ then counts
the number of such load cycles.

%For further simplicity, we focus on the stationary disjoint
%displacement-traction problem. Let $\nu$ be the outward normal of
%the boundary $\partial\Omega$ and let
%$\partial\Omega=\partial\Omega_D\cup\partial\Omega_N$ be a
%partition where $\partial\Omega_D$ is clamped and on
%$\partial\Omega_N$ a force surface density
%$g\restriction_{\partial\Omega_N}$ is imposed. In the following,
%we assume
%$\overline{\partial\Omega_D}\cap\overline{\partial\Omega_D}=\emptyset$.
%Then, according to \cite{Finite_Elements_Ern} the mixed problem of
%linear isotropic elasticity is described by:

Let $\nu$ be the outward normal of the boundary $\partial\Omega$
and let $\partial\Omega=\partial\Omega_D\cup\partial\Omega_N$ be a
partition where $\partial\Omega_D$ is clamped and on
$\partial\Omega_N$ a force surface density
$g\restriction_{\partial\Omega_N}$ is imposed. Then, according to
\cite{Finite_Elements_Ern} the mixed problem of linear isotropic
elasticity is described by:
\begin{align}\label{Thermoelasticity.1.1}
\begin{split}
&\nabla\cdot\sigma(u)+f=0\hspace{32mm}\textrm{in }\Omega,\\
&\sigma(u)=\lambda(\nabla\cdot u)I+\mu(\nabla u+\nabla u^T)\quad\textrm{in }\Omega,\\
&u=0\hspace{51mm}\textrm{on }\partial\Omega_D,\\
&\sigma(u)\cdot \nu=g\hspace{41mm}\textrm{on }\partial\Omega_N.
\end{split}
\end{align}
Here, $\lambda>0$ and $\mu>0$ are the Lam\'e coefficients and
$u:\Omega\to\R^3$ is the displacement field on $\Omega$. $I$ is
the identity on $\R^3$. The linearized strain rate tensor
$\varepsilon(u):\Omega\rightarrow\mathbb{R}^{3\times 3}$ is
defined as $\varepsilon(u)=\frac{1}{2}(\nabla u+\nabla u^T)$.
Approximate numerical solutions can be computed by a finite
element approach, confer \cite{Hetnarski} and
\cite{Finite_Elements_Ern}.

\begin{definition}
\label{stress_models} \textbf{(Local Crack Initiation Model)}

\noindent {\rm Let ${\cal O}\times \mathscr{V}_{\rm
vol}\times\mathscr{V}_{\rm sur}$ be such that for all
$f\in\mathscr{V}_{\rm vol}$, $g\in\mathscr{V}_{\rm sur}$ and
$\Omega\in{\cal O}$ there exists a unique (weak) solution
$u(\Omega)$ to (\ref{Thermoelasticity.1.1}). Let furthermore
$\varrho_{\rm sur}:\mathscr{T}\times \R^d\to \bar\R_+$ and
$\varrho_{\rm vol}:\mathscr{T}\times \R^d\to \bar\R_+$ with
$d=3+\sum_{j=0}^r3^{j+1}=3+\frac{3}{2}(3^{r+1}-1)$ be measurable,
non negative functions with $r\in \N$ the order of the model.
Suppose that the $l$-th weak derivative $\nabla^lu$ of $u$ are
measurable functions for $l=0,\ldots,r$ and that the trace
$\nabla^lu\restriction_{\partial\Omega}$ is well defined in the
sense of measurable functions. Here, $(\nabla^l u)_{j_1\cdots
j_l}^j$ stands for $\frac{\partial^l u_j}{\partial x_{j_1}\cdots
\partial x_{j_l}}$. Then, we define:

\begin{itemize}
\item[(i)] A $r$-th order local crack initiation model with independent increments and linear elasticity state
equation is defined by this data by setting $\gamma_\Omega$ to be
the Poisson point process on $\mathscr{C}^{\rm ext}$ associated to
the intensity measure
\begin{eqnarray*}
\rho_\Omega(B)&=&\int_{B\cap (\mathscr{T}\times\Omega)}\varrho_{\rm vol}(t,x,u,\nabla u,\ldots,\nabla^ru)\, dtdx\\
&+&\int_{B\cap (\mathscr{T}\times\partial\Omega)}\varrho_{\rm sur}(t,x,u,\nabla u,\ldots,\nabla^ru)\, dtdA ~~\forall B\in\mathscr{B}(\mathscr{C}^{\rm ext}),
\end{eqnarray*}
provided that the resulting measures are Radon measures on
$\mathscr{C}^{\rm ext}$.
\item[(ii)] $\gamma$ is said to be strain driven, if $\varrho_{\rm sur}$ and $\varrho_{\rm vol}$ depend only on the elastic
 strain tensor field $\varepsilon(u)$. As the elastic stress tensor field $\sigma(u)$ can be obtained from $\varepsilon(u)$ and vice versa,
 strain and stress driven crack initiation models are synonymous.
\item[(iii)] If $\varrho_{\rm sur}=0$, then $\gamma$ is volume driven and if $\varrho_{\rm vol}=0$ it is surface driven.
\item[(iv)] We say that the $r$-th order crack initiation model has $s$-regular intensity functions, $s\in\mathscr{T}$,
if $\varrho_{\rm sur}$, $\varrho_{\rm vol}$ are in $C^0(\mathscr{T})\otimes C^s(\R^d)$.
\end{itemize}
}
\end{definition}

From the above definition it is now clear that the optimal
reliability problem  -- confer Definition
\ref{optimal_reliability_problem} -- in the case described $r$-th
order local crack initiation model is just a PDE constrained shape
optimization problem:

\begin{lemma} \textbf{(Optimal Reliability and PDE Constrained
Optimization)}\label{opt_reliability_is_shape_opt}\\
For a $r$-th order local crack initiation model with elasticity
state equation as defined in Definition \ref{stress_models}, the
optimal reliability problem given in Definition
\ref{optimal_reliability_problem} is equivalent to the shape
optimization problem given in Definition
\ref{definition_cost_functional_SO_problem} below with
\begin{align}\label{objectiveFunction}
\begin{split}
J(\Omega,y)=\int_{\Omega}{\cal F}_{\rm vol}(x,y,\nabla y,\ldots,\nabla^ry)\, dx\\
+\int_{\partial\Omega}{\cal F}_{\rm sur}(x,y,\nabla y,\ldots,\nabla^ry)\, dA
\end{split}
\end{align}
with ${\cal F}_{\rm vol}(\cdot)=\int_0^{t^*}\varrho_{\rm
vol}(t,\cdot)\, dt$ and ${\cal F}_{\rm
sur}(\cdot)=\int_0^{t^*}\varrho_{\rm sur}(t,\cdot)\, dt$ and
$y:\Omega^{\rm ext}\to\R^3$ sufficiently regular.

In particular, for the case of $s$-regular intensity functions, ${\cal F}_{\rm vol},{\cal F}_{\rm sur}\in C^s(\R^d)$.
\end{lemma}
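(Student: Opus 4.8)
The plan is to evaluate the failure probability $P(T_\Omega\le t^*)$ in closed form using Proposition~\ref{Poisson_Point_Process}, to rewrite it as a strictly monotone function of the quantity $J(\Omega,u(\Omega))$, and thereby identify the minimizers of the two problems. First I would fix $f\in\mathscr{V}_{\rm vol}$, $g\in\mathscr{V}_{\rm sur}$ and $\Omega\in{\cal O}$ and record that, by Definition~\ref{stress_models}, the weak solution $u=u(\Omega)$ of \eqref{Thermoelasticity.1.1} exists and is unique, the weak derivatives $\nabla^lu$, $l=0,\ldots,r$, and their traces on $\partial\Omega$ are well-defined measurable functions, and $\rho_\Omega$ is a Radon measure on $\mathscr{C}^{\rm ext}$, so that $\gamma_\Omega$ and $T_\Omega=\tau(\gamma_\Omega)$ are well-defined. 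Proposition~\ref{Poisson_Point_Process}(ii) applied to $\gamma_\Omega$ then gives $P(T_\Omega\le t^*)=F_{T_\Omega}(t^*)=1-e^{-H_\Omega(t^*)}$ with $H_\Omega(t^*)=\rho_\Omega(\mathscr{C}_{t^*})$. Since $\gamma_\Omega$ is supported on $\mathscr{T}\times\bar\Omega$, one has $\mathscr{C}_{t^*}\cap(\mathscr{T}\times\Omega)=[0,t^*]\times\Omega$ and $\mathscr{C}_{t^*}\cap(\mathscr{T}\times\partial\Omega)=[0,t^*]\times\partial\Omega$ (a finite range of load cycles when $\mathscr{T}=\N_0$), so inserting the definition of $\rho_\Omega$ yields
$$
H_\Omega(t^*)=\int_0^{t^*}\!\!\int_\Omega\varrho_{\rm vol}(t,x,u,\ldots,\nabla^ru)\,dx\,dt+\int_0^{t^*}\!\!\int_{\partial\Omega}\varrho_{\rm sur}(t,x,u,\ldots,\nabla^ru)\,dA\,dt.
$$
Because the integrands are non-negative and measurable, and the iterated integrals are finite ($\rho_\Omega$ being a Radon measure), Tonelli's theorem allows me to interchange the $t$-integral with the $x$- resp.\ surface integral; carrying out the inner $t$-integration first turns the integrands into ${\cal F}_{\rm vol}(x,u,\ldots,\nabla^ru)=\int_0^{t^*}\varrho_{\rm vol}(t,x,u,\ldots,\nabla^ru)\,dt$ and ${\cal F}_{\rm sur}(x,u,\ldots,\nabla^ru)$, so that $H_\Omega(t^*)=J(\Omega,u(\Omega))$ with $J$ as in \eqref{objectiveFunction}.

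Next I would observe that $h\mapsto 1-e^{-h}$ is continuous and strictly increasing on $[0,\infty]$ (with the convention $1-e^{-\infty}:=1$, which matters since $J$ may be infinite); hence for all $\Omega_1,\Omega_2\in{\cal O}$ we have $P(T_{\Omega_1}\le t^*)\le P(T_{\Omega_2}\le t^*)$ if and only if $J(\Omega_1,u(\Omega_1))\le J(\Omega_2,u(\Omega_2))$. Consequently $\Omega^*$ solves the optimal reliability problem of Definition~\ref{optimal_reliability_problem} precisely when it minimizes $\Omega\mapsto J(\Omega,u(\Omega))$ over ${\cal O}$, i.e.\ solves the shape optimization problem of Definition~\ref{definition_cost_functional_SO_problem} with cost functional \eqref{objectiveFunction}; this is the claimed equivalence.

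For the addendum I would argue as follows. If the model has $s$-regular intensity functions, then $\varrho_{\rm vol},\varrho_{\rm sur}\in C^0(\mathscr{T})\otimes C^s(\R^d)$. For a generating tensor $\varrho_{\rm vol}(t,\xi)=a(t)b(\xi)$ with $a\in C^0(\mathscr{T})$, $b\in C^s(\R^d)$, one has $\int_0^{t^*}\varrho_{\rm vol}(t,\xi)\,dt=\big(\int_0^{t^*}a(t)\,dt\big)\,b(\xi)\in C^s(\R^d)$; by linearity this extends to finite sums of such tensors, and (if the completed tensor product is meant) by uniform approximation, equivalently by differentiation under the integral sign over the compact interval $[0,t^*]$, which is legitimate because the $\xi$-derivatives of order $\le s$ are jointly continuous in $(t,\xi)$ and hence uniformly bounded on $[0,t^*]\times K$ for compact $K\subset\R^d$. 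In the discrete case the $t$-integral is a finite sum and the statement is immediate. Thus ${\cal F}_{\rm vol},{\cal F}_{\rm sur}\in C^s(\R^d)$. I do not expect a genuine obstacle in this lemma: the only delicate points are the Tonelli interchange -- which rests on the standing Radon-measure hypothesis of Definition~\ref{stress_models} -- and treating the continuous and discrete time axes $\mathscr{T}$ uniformly.
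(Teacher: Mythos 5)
Your proposal is correct and follows essentially the same route as the paper: the paper's proof likewise reduces optimal reliability to minimizing $\rho_\Omega(\mathscr{C}_{t^*})$ via Proposition \ref{Poisson_Point_Process}(ii), then invokes Definition \ref{stress_models}(i) and Fubini's lemma for positive functions to obtain the cost functional (\ref{objectiveFunction}), and justifies the $C^s$-regularity of ${\cal F}_{\rm vol},{\cal F}_{\rm sur}$ by properties of Bochner integrals, which is exactly what your tensor-product/differentiation-under-the-integral argument spells out in detail.
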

\begin{proof}
By Proposition \ref{Poisson_Point_Process} (ii)  the optimal
reliability problem is equivalent to
$\rho_{\Omega^*}(\mathscr{C}_{t^*})\leq
\rho_{\Omega}(\mathscr{C}_{t^*})$ for all $\Omega\in{\cal O}$. Now
apply Definition \ref{stress_models} (i), Fubini's lemma for
positive functions and the properties of Bochner integrals. \qed
\end{proof}

\section{A Probabilistic Crack Initiation Model for LCF} \label{section_prob_lcf}
A somewhat general approach should contain at least one relevant
example. In the following we present probabilistic LCF crack
initiation as a surface and strain driven crack initiation model
with elasticity state equation. In \cite{SKRG1} this model has
been numerically implemented and applied to gas turbine design. An
extension to thermomechanical equations can be found in
\cite{SKRG2}. Experimental validation is presented in
\cite{SSBKRG}.

One of the paradoxes that comes with the use of the equations of
isotropic elasticity (\ref{Thermoelasticity.1.1}) in the context
of fatigue comes from the fact that any perfectly elastic
deformation of a material is completely reversible and therefore
does not lead to degradation. The time-honored elastic-plastic
stress conversion resolves this issue and is widely used in
engineering. We therefore choose it as the basis of our model.

Defining the von Mises stress as $\sigma_{v}=\sqrt{\frac{3}{2}{\rm
tr}(\sigma^{'2})}$, with $\sigma'=\sigma-\frac{1}{3}{\rm
tr}(\sigma)I$ the trace-free part of $\sigma$ capturing
non-hydrostatic stresses, only, we present the Ramberg-Osgood
equation which is used to locally derive strain levels from scalar
comparison stresses $\sigma_v$, confer \cite{Ramberg}. This
equation describes stress-strain curves of metals near their yield
points.

\textbf{Notation (Ramberg-Osgood Equation)}

\noindent Let $K>0$ denote the strain hardening coefficient and
$n'>0$ the strain hardening exponent. Then, the Ramberg-Osgood
relation between an elastic-plastic comparison strain
$\varepsilon^{\rm el-pl}\in \bar\R_+$ and an elastic-plastic
comparison stress $\sigma_v\in\bar \R_+$ is given by
\begin{align}\label{Thermoelasticity.8.1}
\varepsilon^{\rm el-pl}_v=\frac{\sigma_v^{{\rm el-pl}}}{E}+\left(\frac{\sigma^{{\rm el-pl}}_v}{K}\right)^{1/n'}
\end{align}
with Young's modulus $E=\frac{\mu(3\lambda+2\mu)}{\lambda+\mu}$.
The equation defines the comparison strain $\varepsilon_v^{\rm
el-pl}$. We also write $\varepsilon^{{\rm
el-pl}}_v=\textrm{RO}(\sigma^{{\rm el-pl}}_v)$.

The obvious problem is that the elastic-plastic comparison stress $\sigma^{{\rm el-pl}}_v$ needs to
be defined on the basis of the elastic von Mises stress $\sigma_v$
which can be approximated by a finite element calculation.
This is accomplished by the method of stress
shakedown by Neuber\footnote{The stress shakedown is based on an
energy-conservation ansatz, confer \cite{Hoffmann} for
details on Neuber shakedown in conjunction with equivalent
stresses. As an alternative to Neuber's rule we could have also
used Glinka's method, see e.g. \cite{Knop_Jones_Molent_Wang}.}, confer \cite{Neuber} and
\cite{Harders_Roesler}.

\textbf{Notation (Elastic-Plastic Stress Conversion and
Shakedown)}

\noindent Given $\sigma_v\in\bar\R_+$, the associated
elastic-plastic comparison stress $\sigma_v^{{\rm el-pl}}$ is
defined as the positive solution to the following equation
\begin{align}\label{Thermoelasticity.9.1}
\frac{(\sigma_v)^2}{E}=\sigma^{{\rm el-pl}}_v\,\varepsilon^{{\rm el-pl}}_v=\frac{(\sigma^{{\rm el-pl}}_v)^2}{E}+\sigma^{{\rm el-pl}}_v\left(\frac{\sigma^{{\rm el-pl}}_v}{K}\right)^{1/n'}.
\end{align}

From the the elastic von Mises comparison stress $\sigma_v$, we
can thus calculate the elastic-plastic von Mises stress
$\sigma^{{\rm el-pl}}_v$ by solving (\ref{Thermoelasticity.9.1})
and thus we are able to obtain $\varepsilon_v^{{\rm el-pl}}$ from
(\ref{Thermoelasticity.8.1}). We also write $\sigma_v^{{\rm
el-pl}}=\textrm{SD}(\sigma_v)$.

In structural analysis, fatigue describes the damage or failure of
material under cyclic loading, confer \cite{Harders_Roesler} and
\cite{Vormwald}. Compared to the static case material is damaged
by much lower load amplitudes of cyclic loading. Figure
\ref{Figure_cyclic_loading} shows a triangle shaped uniaxial
load-time-curve as an example, where
$\sigma_a=[(\sigma_{\max}-\sigma_{\min})/2]_v$ is the elastic von
Mises comparison stress amplitude. For more details on surface
driven LCF failure mechanism with respect to polycrystalline
metal, we refer to
\cite{Harders_Roesler,Vormwald,Fedelich,Sornette}.  In fatigue the
number of cycles until failure is determined and if the tests are
strain controlled so-called $E-N$ diagrams can be created, see the
test points in Figure \ref{Figure_woehler_diagram_strain}.
\begin{figure}
\begin{minipage}[hbt]{8cm}
    \centering
\scalebox{0.33}{\input{Doktorarbeit_cyclic_loading_2.pstex_t}}
%\vspace{0.25cm}
\caption{Triangle shaped load-time-curve.}
    \label{Figure_cyclic_loading}
\end{minipage}
\hfill
\begin{minipage}[hbt]{8cm}
    \centering
\scalebox{0.28}{\input{Doktorarbeit_EN_diagram_2.pstex_t}}
\caption{EN-diagram of a standardized specimen.}
    \label{Figure_woehler_diagram_strain}
\end{minipage}
\end{figure}

Figure \ref{Figure_woehler_diagram_strain} also shows the
relationship of strain amplitude $\varepsilon_a^{\rm el-pl}={\rm
RO}\circ{\rm SD}(\sigma_a)$ and the life time $N_i$ to crack
initiation measured in cycles. The Coffin-Manson-Basquin (CMB) or
W\"ohler equation\footnote{ A discussion of the physical origin of
this equation can be found in \cite{Sornette}.} connects the
number of cycles to crack initiation $N_i$ with the plastic strain
amplitude $\varepsilon_a^{\rm el-pl}$.

\textbf{Notation (Coffin-Manson-Basquin Equation)}

\noindent The CMB-equation connects the (deterministic) time to
crack initiation $N_i$ with the elastic-plastic strain amplitude
$\varepsilon_a^{\rm el-pl}$ via
\begin{equation}\label{materials_1.1}
\varepsilon_a^{{\rm el-pl}}={\rm CMB}(N_i)=\frac{\sigma_f'}{E}(2N_i)^b+\varepsilon_f'(2N_i)^c.
\end{equation}
Here, $\sigma_f'$ and $\epsilon_f'$ are positive and $b$ and $c$
negative material parameters.

In the following, we will assume for simplicity that the lower
edge of the load cycle is stress-free, corresponding to $f_{\rm
min}=0$ and $g_{\rm min}=0$ in (\ref{Thermoelasticity.1.1}) and
thus $\sigma_a=\sigma_v/2$, where we set $\sigma=\sigma_{\rm
max}$, $f=f_{\rm max}$ and $g=g_{\rm max}$.

In deterministic design, the lifetime of a component under cyclic
loading corresponds to the loading conditions at the part's
surface position of highest stress. Safety factors are
additionally imposed to account for the stochastic nature of LCF
and size effects\footnote{Note that different geometries of test
specimens lead to different W\"ohler /CMB curves, confer
\cite{Vormwald}.}. We refer to this method as the safe-life
approach in fatigue design which is widely used in engineering,
confer \cite{Harders_Roesler,Vormwald,SSBKRG,Schott}. The
following lemma collects the mathematical properties of this
procedure that will be of importance:

\begin{lemma}
\label{PropertiesNi} $\varphi={\rm CMB}^{-1}\circ{\rm RO}\circ
{\rm SD}:\R_+\to \R_+$, that maps elastic von Mises comparison
stress to a predicted life time, satisfies:
\begin{itemize}
\item[(i)] $\varphi$ is bijective and strictly monotonically decreasing;
\item[(ii)] $\lim_{\sigma_v\to0}\varphi(\sigma_v)=\infty$;
\item[(iii)] $\varphi$ is in $C^\infty(\R_+)$.
\end{itemize}
\end{lemma}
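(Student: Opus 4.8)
The plan is to establish each of the three properties by tracking the composition $\varphi = {\rm CMB}^{-1}\circ{\rm RO}\circ{\rm SD}$ from the inside out, using the explicit defining equations \eqref{materials_1.1}, \eqref{Thermoelasticity.8.1} and \eqref{Thermoelasticity.9.1}. First I would analyze ${\rm SD}$: rewriting \eqref{Thermoelasticity.9.1} as $\sigma_v^2 = (\sigma_v^{\rm el-pl})^2 + E\,\sigma_v^{\rm el-pl}(\sigma_v^{\rm el-pl}/K)^{1/n'}$, the right-hand side is a strictly increasing continuous function of $\sigma_v^{\rm el-pl}\in\R_+$ (sum of two strictly increasing terms, since $1/n'>0$), vanishing at $0$ and diverging to $+\infty$; hence for each $\sigma_v>0$ there is a unique positive solution, and ${\rm SD}$ is a strictly increasing bijection of $\R_+$ onto itself with ${\rm SD}(\sigma_v)\to 0$ as $\sigma_v\to 0$. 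Smoothness of ${\rm SD}$ follows from the implicit function theorem: the defining relation $G(\sigma_v,\sigma_v^{\rm el-pl})=0$ has $\partial G/\partial\sigma_v^{\rm el-pl}\neq 0$ on $\R_+\times\R_+$ because that partial derivative is again a sum of strictly positive terms, and $G$ is $C^\infty$ there. Next, ${\rm RO}$ as given by \eqref{Thermoelasticity.8.1} is visibly a $C^\infty$, strictly increasing bijection of $\R_+$ onto itself with ${\rm RO}(0)=0$. Therefore ${\rm RO}\circ{\rm SD}$ is a $C^\infty$ strictly increasing bijection $\R_+\to\R_+$ with limit $0$ at $0$.

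Then I would turn to ${\rm CMB}$ of \eqref{materials_1.1}: as a function of $N_i\in\R_+$ it is a sum of two power terms $\tfrac{\sigma_f'}{E}(2N_i)^b$ and $\varepsilon_f'(2N_i)^c$ with $\sigma_f',\varepsilon_f'>0$ and $b,c<0$, hence each term is $C^\infty$, strictly decreasing, with limit $+\infty$ as $N_i\to 0^+$ and limit $0$ as $N_i\to\infty$; so ${\rm CMB}$ is a $C^\infty$ strictly decreasing bijection of $\R_+$ onto $\R_+$, and ${\rm CMB}^{-1}$ is then likewise a $C^\infty$ strictly decreasing bijection of $\R_+$ onto $\R_+$ (the derivative of ${\rm CMB}$ never vanishes on $\R_+$, so the inverse function theorem applies globally). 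Composing, $\varphi = {\rm CMB}^{-1}\circ({\rm RO}\circ{\rm SD})$ is a composition of a strictly increasing $C^\infty$ bijection followed by a strictly decreasing $C^\infty$ bijection, hence a strictly decreasing $C^\infty$ bijection of $\R_+$ onto $\R_+$, which gives (i) and (iii). For (ii), as $\sigma_v\to 0$ we have ${\rm RO}\circ{\rm SD}(\sigma_v)\to 0$ by the above, and ${\rm CMB}^{-1}(\varepsilon)\to +\infty$ as $\varepsilon\to 0^+$ since ${\rm CMB}(N_i)\to 0$ exactly as $N_i\to\infty$; chaining these limits yields $\lim_{\sigma_v\to 0}\varphi(\sigma_v)=\infty$.

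The only step requiring genuine care is the analysis of ${\rm SD}$, since it is defined only implicitly through the quadratic-type relation \eqref{Thermoelasticity.9.1}; everything else is either an explicit elementary formula or a formal consequence of chaining monotone bijections. I expect the main obstacle to be verifying cleanly that the implicit function theorem applies uniformly on all of $\R_+$ — i.e. that the relevant partial derivative is nonzero and that the solution branch $\sigma_v\mapsto{\rm SD}(\sigma_v)$ is globally well-defined and does not escape to a boundary — and that the behaviour near the endpoint $\sigma_v=0$ is handled correctly (one-sided limits, the solution staying in the open ray $\R_+$). Once those facts about ${\rm SD}$ are in hand, assembling (i)–(iii) is routine bookkeeping on compositions of monotone smooth bijections.
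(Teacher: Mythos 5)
Your proposal is correct and follows essentially the same route as the paper: the paper's (very terse) proof also treats ${\rm RO}$ as an explicit smooth strictly increasing bijection and obtains ${\rm SD}$ and ${\rm CMB}^{-1}$ as inverses of strictly monotone $C^\infty$ maps, then composes. Your extra care with the implicit/inverse function theorem (nonvanishing partial derivative of the defining relation for ${\rm SD}$, nonvanishing ${\rm CMB}'$) and the endpoint limits simply fills in details the paper dismisses as ``elementary to check.''
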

\begin{proof}
It is elementary to check that ${\rm RO}:\R_+\to\R_+$ is
bijective, an element of $C^\infty(\R^+)$ and strictly monotonic
increasing. As inverse of strictly monotonically increasing
(decreasing) $C^\infty(\R_+)$-function, the same applies to ${\rm
SD}$ (${\rm CMB}^{-1}$, which is strictly monotonically
decreasing, however). The three assertions of the lemma now follow
immediately. \qed
\end{proof}

\textbf{Notation (Deterministic LCF-Life at a Surface Point)}

\noindent Let $\mathbb{R}^{3\times 3}$ be the space of real
$3\times3$ matrices. Define $N_{\rm det}:\mathbb{R}^{3\times
3}\to\R_+\cup\{\infty\}$ via
\begin{equation}
\label{eqaNdet}
N_{\rm det}(M)=\varphi(\left[\lambda \, {\rm tr}(M)I+\mu(M+M^T)\right]_v),
\end{equation}
with $\varphi$ as in Lemma \ref{PropertiesNi} extended by
$\varphi(0)=\infty$. Given a solution $u\in [H^1(\Omega)]^3$ to
(\ref{Thermoelasticity.1.1}) such that the trace $\nabla
u\restriction_{\partial\Omega}$ can be reasonably defined and can
be represented as a continuous function, $N_{\rm det}(\nabla
u(x))$ is the predicted deterministic LCF-life at the point
$x\in\partial\Omega$.

Although somewhat out of the main line of this article, we state
the following definition that represents (up to safety factors)
the traditional engineering lifing approach:

\begin{definition}\label{definition_deterministic_LCF} \textbf{(Deterministic Optimal LCF-Lifing Problem)}\\
\emph{Let the setting of the previous Definition of LCF-life at a surface point be given.
\begin{itemize}
\item[(i)] The deterministic life time to crack initiation
for LCF is defined as
$$T_{{\rm det},\Omega}=\inf\{N_{\textrm{det}}(\nabla u(x))|x\in\partial\Omega\}.$$
\item[(ii)] Given a set of admissible shapes ${\cal O}$ and loads
$f\in\mathscr{V}_{\rm vol}$, $g\in \mathscr{V}_{\rm sur}$ such that the above setting
holds for all $\Omega\in{\cal O}$, we define the deterministic optimal LCF lifing problem as follows
$$
\mbox{Find } \Omega^*\in {\cal O} \mbox{ such that }T_{{\rm det},\Omega^*}\geq T_{{\rm det},\Omega}~\forall\Omega\in{\cal O}.
$$
\end{itemize}}
\end{definition}

It is a usual approach in reliability statistics
\cite{Escobar_Meeker}, to choose the deterministic life prediction
as a scale variable of a failure time distribution. Moreover,
Weibull distributions are widely used in technical reliability
analysis. Recall that the Weibull distribution with scale
parameter $\eta$ and shape parameter $m$ is defined by the
cumulative distribution function
$F(t)=1-e^{-\left(\frac{t}{\eta}\right)^m}$ for $t>0$ and zero
otherwise.

\begin{definition}\label{definition_local_probabilistic_LCF} \textbf{(Local Weibull Model for LCF)}\\
\rm For $m\geq 1$, the strain and surface driven crack initiation
model (confer Definition \ref{stress_models}) with independent
increments of $1$-st order that is defined by
\begin{align}\label{materials_3.1}
\begin{split}
\varrho_{\rm vol}=0,~~\varrho_{\rm sur}(t,\nabla u)=\frac{m}{N_{\textrm{det}}(\nabla u)}\left(\frac{t}{N_{\textrm{det}}(\nabla u)}\right)^{m-1},
\end{split}
\end{align}
is called the local (probabilistic) Weibull model for LCF. The
associated optimal reliability problem as given in Definition
\ref{optimal_reliability_problem} and Lemma
\ref{opt_reliability_is_shape_opt} is called the optimal
reliability problem for LCF. Here, we employ the convention
$\frac{1}{\infty}=0$.
\end{definition}

The model could be defined for both, $\mathscr{T}=\N_0$ and
$\mathscr{T}=\bar\R_+$, but the second option is used more often.
In this case, ${\cal F}_{\rm sur}(\nabla u)=\left(\frac{t}{N_{\rm
det}(\nabla u)}\right)^m$.

\begin{proposition}\label{properties_Weibull_model} \textbf{(Properties of the local Weibull Model for LCF)}

\noindent Let the conditions of Definition \ref{stress_models} be
fulfilled such that $\left\|\frac{1}{N_{\rm det}(\nabla
u)}\right\|_{L^m(\partial\Omega)}<\infty$ for some $m>1$, and for
all $f\in\mathscr{V}_{\rm vol}$, $g\in\mathscr{V}_{\rm sur}$ and
$\Omega\in{\cal O}$. Then,
\begin{itemize}
\item[(i)] The local probabilistic Weibull model for LCF actually induces a 1st order local crack initiation
model, i.e. the associated measures $\rho_\Omega$ are Radon
measures.
\item[(ii)] The intensities of this model are $0$-regular, i.e. are continuous functions of $\nabla u$.
\item[(iii)] The crack initiation time $T_\Omega$ is Weibull distributed with shape parameter $m$
and scale parameter  $\eta=\left\|\frac{1}{N_{\rm det}(\nabla u)}\right\|_{L^m(\partial\Omega)}^{-1}$.
\end{itemize}
\end{proposition}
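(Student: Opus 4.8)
The plan is to verify the three assertions in turn, each reducing to a routine application of earlier definitions together with basic measure theory and the theory of Weibull distributions. Throughout I would use that, under the stated hypotheses, $\nabla u\restriction_{\partial\Omega}$ is a continuous function (as assumed in Definition~\ref{stress_models}), and that by Lemma~\ref{PropertiesNi} the map $\varphi$ is continuous and bijective with $\varphi(0)=\infty$, so that $N_{\rm det}=\varphi\circ[\,\cdot\,]_v\circ(\lambda\,{\rm tr}(\cdot)I+\mu(\cdot+\cdot^T))$ is a continuous function from $\mathbb{R}^{3\times3}$ to $\R_+\cup\{\infty\}$, and hence $1/N_{\rm det}(\nabla u(\cdot))$ is a continuous, nonnegative function on $\partial\Omega$ with the convention $1/\infty=0$.

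For (i), I would show that the candidate measure $\rho_\Omega$ from Definition~\ref{definition_local_probabilistic_LCF}, namely $\rho_\Omega(B)=\int_{B\cap(\mathscr{T}\times\partial\Omega)}\varrho_{\rm sur}(t,\nabla u)\,dt\,dA$ with $\varrho_{\rm sur}(t,\nabla u)=\frac{m}{N_{\rm det}(\nabla u)}\big(\frac{t}{N_{\rm det}(\nabla u)}\big)^{m-1}$, is a Radon measure on $\mathscr{C}^{\rm ext}$. Since $\varrho_{\rm sur}\geq0$ is measurable, $\rho_\Omega$ is a well-defined (Borel) measure by Tonelli; the only thing to check is local finiteness. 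A bounded $B\subseteq\mathscr{C}^{\rm ext}$ is contained in $[0,t^*]\times\Omega^{\rm ext}$ for some $t^*<\infty$, and on such a set $\int_0^{t^*}\varrho_{\rm sur}(t,\nabla u)\,dt=\big(t^*/N_{\rm det}(\nabla u)\big)^m$, so
\[
\rho_\Omega(B)\leq \int_{\partial\Omega}\Big(\frac{t^*}{N_{\rm det}(\nabla u)}\Big)^m dA=(t^*)^m\Big\|\tfrac{1}{N_{\rm det}(\nabla u)}\Big\|_{L^m(\partial\Omega)}^m<\infty
\]
by hypothesis. Hence $\rho_\Omega$ is Radon, which by Definition~\ref{stress_models}(i) means the model genuinely induces a $1$st-order local crack initiation model.

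Assertion (ii) is the $0$-regularity claim: by Definition~\ref{stress_models}(iv) I must show $\varrho_{\rm sur}\in C^0(\mathscr{T})\otimes C^0(\R^d)$, i.e. that $(t,M)\mapsto \frac{m}{N_{\rm det}(M)}\big(\frac{t}{N_{\rm det}(M)}\big)^{m-1}$ is continuous in $M\in\mathbb{R}^{3\times3}$ (here the relevant part of $\nabla u$) and in $t$. Continuity in $t$ is obvious since $m\geq1$. For continuity in $M$, note $M\mapsto 1/N_{\rm det}(M)$ is continuous with the convention $1/\infty=0$ (using $\varphi(0)=\infty$ and strict monotonicity from Lemma~\ref{PropertiesNi}), and $x\mapsto x^{m-1}$ is continuous on $[0,\infty)$ for $m\geq1$, so the product and composition are continuous; this gives (ii). For (iii), I would invoke Proposition~\ref{Poisson_Point_Process}(ii): the distribution function of $T_\Omega$ is $F_{T_\Omega}(t)=1-e^{-H(t)}$ with $H(t)=\rho_\Omega(\mathscr{C}_t)$, and
\[
H(t)=\int_{\partial\Omega}\int_0^t\varrho_{\rm sur}(s,\nabla u)\,ds\,dA=\int_{\partial\Omega}\Big(\frac{t}{N_{\rm det}(\nabla u)}\Big)^m dA=\Big(\frac{t}{\eta}\Big)^m
\]
with $\eta=\|1/N_{\rm det}(\nabla u)\|_{L^m(\partial\Omega)}^{-1}$, which is exactly the cumulative hazard of a Weibull distribution with shape $m$ and scale $\eta$; hence $T_\Omega$ is Weibull$(m,\eta)$-distributed. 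The main obstacle, such as it is, lies in (i)--(ii): one must be careful that $N_{\rm det}$ may take the value $\infty$ on the zero-stress set and that the convention $1/\infty=0$ is used consistently so that $1/N_{\rm det}(\nabla u)$ is an honest continuous $L^m$-function on $\partial\Omega$; once that is granted, every step is a one-line estimate or an appeal to an earlier result.
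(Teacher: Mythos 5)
Your proposal is correct and follows essentially the same route as the paper: (i) bound $\rho_\Omega(B)$ for bounded $B$ by $(t^*)^m\|1/N_{\rm det}(\nabla u)\|_{L^m(\partial\Omega)}^m$ using the hypothesis, (ii) deduce continuity of $1/N_{\rm det}(\nabla u)$ from Lemma \ref{PropertiesNi} together with the convention $1/\infty=0$, and (iii) identify the cumulative hazard $H(t)=(t/\eta)^m$ with that of a Weibull law via Proposition \ref{Poisson_Point_Process}(ii). Your additional remarks on the behaviour at the zero-stress set only make explicit what the paper leaves implicit.
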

\begin{proof}
(i) To see the finiteness of the intensity measure on sets of
finite diameter, note that for any such set $B\subseteq
\mathscr{B}(\mathscr{C}^{\rm ext})$ we find $t\in\mathscr{T}$ such
that $B\subseteq \mathscr{C}^{\rm ext}_t$ and thus, with ${\cal
F}_{\rm sur}(\nabla u)=\left(\frac{t}{N_{\rm det}(\nabla
u)}\right)^m$, confer Definition
\ref{definition_local_probabilistic_LCF},
\begin{equation}
\label{WeibullProbLCF}
\rho_\Omega(B)\leq\rho_\Omega(\mathscr{C}^{\rm ext}_t)=\int_{\mathscr{C}^{\rm ext}_t\cap(\mathscr{T}\times \partial\Omega)}\varrho_{\rm sur}(t,\nabla u)\,dAdt=t^m\left\|\frac{1}{N_{\rm det }(\nabla u)}\right\|_{L^m(\partial\Omega)}^m <\infty.
\end{equation}

(ii) Obviously, the elastic von Mises stress depends continuously
on the components of $\nabla u$. Taking into account the
convention $\frac{1}{\infty}=0$ and Lemma \ref{PropertiesNi}, we
see that $\frac{1}{N_i(\nabla u)}$ depends continuously on the
components of $\nabla u$. This implies $0$-regularity of the
intensity function (\ref{materials_3.1}) in the sense of
Definition \ref{stress_models} (iv).

(iii) The right hand side of (\ref{WeibullProbLCF}) equals the
cumulative hazard function $H(t)$ of $T_\Omega$. Comparing this to
the Weibull cumulative hazard function $H_{\rm
Wei}(t)=\left(\frac{t}{\eta}\right)^m$, the assertion follows.
\qed
\end{proof}

Higher order regularity of the intensity functions crucially
depends on the material parameters. We postpone a detailed account
to \cite{Gottschalk_Schmitz}.

\section{Shape Optimization and $C^k$-Admissible Domains}\label{section_SO_admissible_domains}

In this section, we introduce into an abstract setting of shape
optimization and discuss so-called $C^k$-admissible domains. %From
%what has been said at the end of Section
%\ref{section_PDE_constrained_opt}, it is clear that we have to
%introduce stronger regularity conditions than demanded by the
%usual weak theory in order to obtain sufficiently regular
%solutions to (\ref{Thermoelasticity.1.1}).
This leads to a theoretical frame for an existence proof of
optimal designs in Section \ref{section_optimal_reliability} with
respect to a general class of cost functionals which include some
functionals given in Lemma \ref{opt_reliability_is_shape_opt},
namely those coming from $3$-rd order local crack initiation model
with $0$-regular intensity functions, see  Definition
\ref{stress_models} (i--iv). In particular this includes the
local, probabilistic Weibull model for LCF, confer Definition
\ref{definition_local_probabilistic_LCF} and Proposition
\ref{properties_Weibull_model}, but also the deterministic optimal
LCF lifing problem, confer Definition
\ref{definition_deterministic_LCF}.

First, we introduce basic notations and closely follow Section 2.4 in
\cite{Shape_Optimization_Haslinger}. These notations have to be precisely concretized in each class
of shape optimization problems, as we will do so at the end of this section and in
Section \ref{section_optimal_reliability}, after we have established the $C^k$-admissible domains and
the state problem, respectively. For further introductions
into shape optimization confer \cite{Sokolowski_Zolesio,Bucur_Buttazzo,DZ}, for example.

\textbf{Notation (Family of Admissible Domains, State Space)}\\
Let $\tilde{\mathcal{O}}$ denote a family of admissible domains
and let $V(\Omega)$ for every $\Omega\in\tilde{\mathcal{O}}$
denote a state space of real functions defined in $\Omega$.

\textbf{Notation (Convergence of Sets and of Functions
with Variable Domains)}\\
Let $(\Omega_n)_{n\in\mathbb{N}}$ be a sequence in
$\tilde{\mathcal{O}}$ and let $\Omega\in\tilde{\mathcal{O}}$. Then
$\Omega_n\stackrel{\tilde{\mathcal{O}}}{\longrightarrow}\Omega$ as
$n\rightarrow\infty$ denotes the convergence of
$(\Omega_n)_{n\in\mathbb{N}}$ against $\Omega$. If
$(y_n)_{n\in\mathbb{N}}$ is a sequence of functions with $y_n\in
V(\Omega_n)$ for every $n\in\mathbb{N}$ and if $y\in V(\Omega)$
then $y_n\rightsquigarrow y$ as $n\rightarrow\infty$ denotes the
convergence of $(y_n)_{n\in\mathbb{N}}$ against $y$. Moreover, it
is assumed that any subsequence of a convergent sequence converges
against the limit of the original one.

In every $\Omega\in\tilde{\mathcal{O}}$ one solves a state problem
which can be a PDE or a variational inequality, for example.
Assuming that every state problem has a unique solution and
associating with any $\Omega\in\tilde{\mathcal{O}}$ the
corresponding unique solution $u(\Omega)\in V(\Omega)$ one obtains
the map $u:\Omega\longmapsto u(\Omega)\in V(\Omega)$. Let
$\mathcal{O}$ be a subfamily of $\tilde{\mathcal{O}}$, then
$\mathcal{G}=\{(\Omega,u(\Omega))\,|\,\Omega\in\mathcal{O}\}$ is
called the graph of the mapping $(u(\cdot))$ restricted to
$\mathcal{O}$.

\begin{definition}\label{definition_cost_functional_SO_problem}\textbf{(Cost Functional, Optimal Shape Design Problem)}\rm\\
A cost functional $J$ on $\tilde{\mathcal{O}}$ is given by a map
$J:(\Omega,y)\longmapsto J(\Omega,y)\in \mathbb{R}$, where
$\Omega\in\tilde{\mathcal{O}}$ and $y\in V(\Omega)$. Let
$\mathcal{O}$ be a subfamily of $\tilde{\mathcal{O}}$ and for
every $\Omega\in\mathcal{O}$ let $u(\Omega)$ be the unique
solution of a state problem given in $\Omega$. An optimal shape
design problem can then be defined by
\begin{equation}\label{SO.1.1}
 \left\{%
 \begin{array}{l}
   \textrm{Find $\Omega^*\in\mathcal{O}$ such that} \\
   J(\Omega^*,u(\Omega^*))\leq J(\Omega,u(\Omega))\quad\forall\Omega\in\mathcal{O}. \\
 \end{array}%
 \right.
\end{equation}
\end{definition}

Now, we present a statement regarding the existence of optimal
shapes. Note that this theorem will structure the following
section where we prove of our main existence results.

\begin{theorem}\label{theorem_optimum_shape_design} \textbf{(Existence of An Optimum in Shape Design Problems)}\\
Let $\tilde{\mathcal{O}}$ be a family of admissible domains and
$\mathcal{O}$ a subfamily. Moreover, let $J$ be a cost functional
on $\tilde{\mathcal{O}}$ and assume that every
$\Omega\in\tilde{\mathcal{O}}$ has a state problem with state
space $V(\Omega)$ where each such state problem has a unique
solution $u(\Omega)\in V(\Omega)$. Finally, conjecture
\begin{itemize}
\item[(i)] Compactness of $\mathcal{G}=\{(\Omega,u(\Omega))\,|\,\Omega\in\mathcal{O}\}$:

Every sequence
$(\Omega_n,u(\Omega_n))_{n\in\mathbb{N}}\subset\mathcal{G}$ has a
subsequence $(\Omega_{n_k},u(\Omega_{n_k}))_{n_k\in\mathbb{N}}$
which satisfies
\begin{align*}
  &\Omega_{n_k}\stackrel{\tilde{\mathcal{O}}}{\longrightarrow}\Omega,\quad k\rightarrow\infty,\\
&u(\Omega_{n_k})\rightsquigarrow u(\Omega),\quad
k\rightarrow\infty
\end{align*}
for some $(\Omega,u(\Omega))\in\mathcal{G}$.
\item[(ii)] Lower semi-continuity of $J$:

Let $(\Omega_n)_{n\in\mathbb{N}}$ with
$\Omega_n\in\tilde{\mathcal{O}}$, $n\in\mathbb{N}$, and
$(y_n)_{n\in\mathbb{N}}$ with $y_n\in V(\Omega_n)$,
$n\in\mathbb{N}$, be sequences and let $\Omega$ and $y$ be some
elements in $\tilde{{\mathcal{O}}}$ and in $V(\Omega)$,
respectively. Then
\begin{align*}
 \left.%
 \begin{array}{l}
  \Omega_{n}\stackrel{\tilde{\mathcal{O}}}{\longrightarrow}\Omega,\quad n\rightarrow\infty, \\
  y_n\rightsquigarrow y,\quad n\rightarrow\infty  \\
 \end{array}%
 \right\}
 \left.%
 \begin{array}{l}
  \\
\quad  \Longrightarrow\quad\liminf_{n\rightarrow\infty} J(\Omega_{n},y_{n})\geq J(\Omega,y). \\
  \\
 \end{array}%
 \right.
\end{align*}
\end{itemize}
Under these assumptions, the optimal shape design problem
(\ref{SO.1.1}) possesses at least one solution.
\end{theorem}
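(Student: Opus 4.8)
The plan is to run the classical direct method of the calculus of variations, adapted to the abstract shape-optimization setting. First I would observe that the cost functional $J$ is bounded below on $\mathcal{O}$: if it were not, one could extract a minimizing sequence along which $J(\Omega_n,u(\Omega_n))\to-\infty$, but compactness (i) yields a subsequence converging to some $(\Omega,u(\Omega))\in\mathcal{G}$, and lower semi-continuity (ii) then forces $J(\Omega,u(\Omega))\le\liminf J(\Omega_{n_k},u(\Omega_{n_k}))=-\infty$, which is impossible since $J$ takes values in $\mathbb{R}$. Hence $\alpha:=\inf_{\Omega\in\mathcal{O}}J(\Omega,u(\Omega))>-\infty$.

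Next I would pick a minimizing sequence $(\Omega_n)_{n\in\mathbb{N}}\subset\mathcal{O}$, i.e. one with $J(\Omega_n,u(\Omega_n))\to\alpha$ as $n\to\infty$; such a sequence exists by definition of the infimum. Applying assumption (i) to the sequence $(\Omega_n,u(\Omega_n))_{n\in\mathbb{N}}\subset\mathcal{G}$, I obtain a subsequence $(\Omega_{n_k},u(\Omega_{n_k}))_{n_k\in\mathbb{N}}$ and an element $(\Omega^*,u(\Omega^*))\in\mathcal{G}$ with $\Omega_{n_k}\stackrel{\tilde{\mathcal{O}}}{\longrightarrow}\Omega^*$ and $u(\Omega_{n_k})\rightsquigarrow u(\Omega^*)$ as $k\to\infty$. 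Crucially, membership of the limit in $\mathcal{G}$ guarantees $\Omega^*\in\mathcal{O}$, so $\Omega^*$ is an admissible competitor.

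Then I would invoke the lower semi-continuity assumption (ii) with the sequences $\Omega_{n_k}$ and $y_k:=u(\Omega_{n_k})\in V(\Omega_{n_k})$, whose limits $\Omega^*$ and $y:=u(\Omega^*)\in V(\Omega^*)$ are exactly the objects just produced. This gives
\[
J(\Omega^*,u(\Omega^*))\le\liminf_{k\to\infty}J(\Omega_{n_k},u(\Omega_{n_k}))=\lim_{n\to\infty}J(\Omega_n,u(\Omega_n))=\alpha,
\]
where the middle equality uses that a subsequence of a convergent sequence has the same limit. Since also $J(\Omega^*,u(\Omega^*))\ge\alpha$ by definition of the infimum over $\mathcal{O}$, we conclude $J(\Omega^*,u(\Omega^*))=\alpha\le J(\Omega,u(\Omega))$ for all $\Omega\in\mathcal{O}$, which is precisely \eqref{SO.1.1}. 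This completes the argument.

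There is essentially no hard obstacle here: the theorem is the abstract scaffold, and all the analytic difficulty — verifying compactness of $\mathcal{G}$ via uniform Schauder estimates and Arzelà–Ascoli, and verifying lower semi-continuity for the singular surface cost functionals — is deferred to the hypotheses (i) and (ii), which are discharged later in Section \ref{section_optimal_reliability}. The only points requiring minor care in the write-up are the boundedness-below step (which needs the $\mathbb{R}$-valuedness of $J$ together with both hypotheses) and the insistence that the limiting pair lies in $\mathcal{G}$ rather than merely in the closure of $\mathcal{G}$, so that $\Omega^*$ is genuinely an admissible shape over which the infimum is taken.
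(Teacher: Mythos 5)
Your argument is correct and is exactly the standard direct-method proof that the paper relies on for this abstract theorem (which it takes from \cite{Shape_Optimization_Haslinger} without reproducing a proof, and which it later mirrors concretely in the proof of Theorem \ref{theorem_existence_optimal_shape}): minimizing sequence, compactness of $\mathcal{G}$ to extract a convergent subsequence with limit pair in $\mathcal{G}$, lower semi-continuity to pass to the limit. Your preliminary boundedness-below step is fine, though strictly redundant, since the same extraction applied to any minimizing sequence already forces the infimum to equal the real number $J(\Omega^*,u(\Omega^*))$.
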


According to the previous theorem and abstract setting we have to
define the family of admissible domains $\tilde{\mathcal{O}}$. In
this work we consider so-called $C^k$-admissible domains which
have smooth boundaries. For the sake of simplicity, we only
optimize a part of the boundary of these shapes. This method is
described in Section 2.8 of \cite{Sokolowski_Zolesio} and in
\cite{Shape_Optimization_Haslinger}, too. Importantly,
$C^k$-admissible domains satisfy compactness properties as
required in Theorem \ref{theorem_optimum_shape_design} according
to the Arzela-Ascoli theorem.

At first, admissible domains are defined which are determined by
uniformly bounded functions. Later on, these functions are assumed
to be sufficiently smooth to obtain $C^k$-admissible domains. On
these domains we will later impose boundary value problems (BVPs)
of linear elasticity which are characterized by disjoint Dirichlet
and Neumann boundaries. In the following we employ so-called
uniform cone properties which are defined in Appendix A of
\cite{Shape_Optimization_Haslinger}.

\begin{figure}[htbp]
\centering
%\scalebox{0.52}{\input{Doktorarbeit_admissible_shape_3.pstex_t}}
\scalebox{0.37}{\input{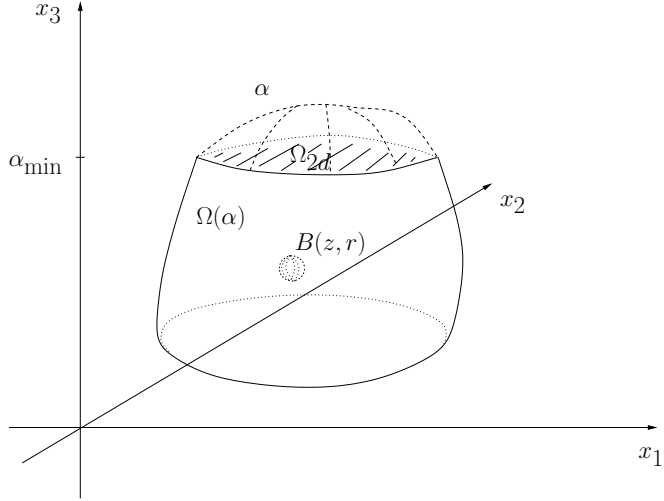}}
%\vspace{0.25cm}
\caption{Admissible shape.}
    \label{Figure_admissible_shape}
\end{figure}

\begin{definition}\label{definition_design_variables_admissible_domains}\textbf{(Basic Design, Design Variables, Admissible Domains)}\rm\\
Let $\hat{\Omega}\subset\mathbb{R}^3$ be a simply connected and
bounded domain which is called basic design under the following
assumptions:

\begin{itemize}
\item[(i)] $\hat{\Omega}$ has a uniform cone property and a $C^k$-boundary\footnote{This leads to a
local description of $\partial\hat{\Omega}$ by a finite number of
hemisphere transformations of class $C^k$, confer Definitions
\ref{definition_hemisphere_property} and
\ref{definition_Lipschitz_Cka_boundary_Gilbarg} and Remark
\ref{remark_Cka_boundary}.} for some $k\in\mathbb{N},k\geq1$.
\item[(ii)] There is some $\alpha_{\min}\in\mathbb{R}$ so that the cross section
$\Omega_{2d}=\{(x_1,x_2)\in\mathbb{R}^2\,|\,(x_1,x_2,\alpha_{\min})\in\hat{\Omega}\}$
is a nonempty domain in $\mathbb{R}^2$.
\item[(iii)] There are some $z\in\hat{\Omega}$ and $r>0$ such
that $B(z,r)\subset\hat{\Omega}$ and $z_3+r<\alpha_{\min}$.
\end{itemize}
For $\alpha_{\max}>\alpha_{\min}$ and positive constants
$L_1,L_2,L_3$ the elements of%\footnote{The $k$-th derivative
%$\alpha^{(k)}$ of $\alpha\in C^{k}(\overline{\Omega}_{2d})$ is a
%multilinear map on $\bigotimes_{i=1}^k\mathbb{R}^2$, confer VII.4
%of \cite{Amann_Escher_2}.}
\begin{align*}
\begin{split}
&\tilde{U}^{\textrm{ad}}=\left\{\alpha\in
  C^{k}(\overline{\Omega}_{2d})\,\left|\,\alpha_{\min}\leq\alpha\leq\alpha_{\textrm{max}}\,\textrm{in
}\overline{\Omega}_{2d},\,\alpha|_{\partial\Omega_{2d}}=\alpha_{\min},\,\int_{\Omega_{2d}}\right.\alpha(x)dx=L_1,\,\|\alpha\|_{C^k}\leq
L_2,\right.\\
&\hspace{37mm}\left.\left|\alpha^{(k)}(x)-\alpha^{(k)}(y)\right|%{\mathcal{L}^m(\mathbb{R}^2,\mathbb{R})}
\leq L_3\|x-y\|_2\,\,\forall x,y\in\overline{\Omega}_{2d}\right\}
\end{split}
\end{align*}
are called design variables. Let $\alpha\in \tilde{U}^{ad}$ define
the set
\begin{align*}
\begin{split}
\Omega(\alpha)=\{x\in\hat{\Omega}\,|\,x_3\leq\alpha_{\min}\}\setminus\overline{B(z,r)}\,\,\cup\,\,
\{x\in\mathbb{R}^3\,|\,(x_1,x_2)\in\Omega_{2d},\,\alpha_{\min}<
x_3<\alpha(x_1,x_2)\},
\end{split}
\end{align*}
see Figure \ref{Figure_admissible_shape}. Varying only functions
$\alpha\in \tilde{U}^{\textrm{ad}}$ the domains
$\tilde{\mathcal{O}}=\{\Omega(\alpha)\,|\,\alpha\in
\tilde{U}^{\textrm{ad}}\}$ with Lipschitz continuous boundaries
are called admissible domains. Finally, choose an open superset
for all admissible domains such as
$\Omega^{\textrm{ext}}=B(z,r^{\textrm{ext}})$ with
$r^{\textrm{ext}}>0$ sufficiently large.
\end{definition}

\begin{lemma}\label{lemma_compact_admissible_shapes}
$\tilde{U}^{\textrm{ad}}$ is compact in
$\left(C^k(\overline{\Omega}_{2d}),\|\cdot\|_{C^k}\right)$.
\end{lemma}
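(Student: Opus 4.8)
The plan is to apply the Arzelà--Ascoli theorem in the space $C^k(\overline{\Omega}_{2d})$. Recall that a subset of $C^k(\overline{\Omega}_{2d})$ is relatively compact in the $\|\cdot\|_{C^k}$-norm if and only if the functions together with all their derivatives up to order $k$ are uniformly bounded and equicontinuous. Uniform boundedness is immediate from the constraint $\|\alpha\|_{C^k}\leq L_2$ in the definition of $\tilde U^{\mathrm{ad}}$. For equicontinuity, note that for multi-indices of order $\leq k-1$ the partial derivatives $\alpha^{(j)}$ are themselves bounded in $C^1$ (since $\|\alpha\|_{C^k}\le L_2$ controls one more derivative), hence uniformly Lipschitz with constant at most $L_2$, which gives equicontinuity. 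For the top-order derivatives $\alpha^{(k)}$, equicontinuity is exactly the uniform Lipschitz bound $|\alpha^{(k)}(x)-\alpha^{(k)}(y)|\leq L_3\|x-y\|_2$ built into the definition of $\tilde U^{\mathrm{ad}}$. Thus $\tilde U^{\mathrm{ad}}$ is relatively compact, i.e. every sequence in it has a subsequence converging in $\|\cdot\|_{C^k}$ to some limit $\alpha\in C^k(\overline{\Omega}_{2d})$.

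It then remains to show that $\tilde U^{\mathrm{ad}}$ is closed, so that the limit $\alpha$ again lies in $\tilde U^{\mathrm{ad}}$; combined with relative compactness this yields compactness. Here one checks that each defining condition is preserved under $C^k$-convergence: the pointwise inequalities $\alpha_{\min}\leq\alpha\leq\alpha_{\max}$ and the boundary condition $\alpha|_{\partial\Omega_{2d}}=\alpha_{\min}$ pass to the limit by uniform (hence pointwise) convergence; the integral constraint $\int_{\Omega_{2d}}\alpha\,dx=L_1$ passes to the limit because uniform convergence on the bounded domain $\Omega_{2d}$ allows interchange of limit and integral; the norm bound $\|\alpha\|_{C^k}\leq L_2$ is preserved because $\|\cdot\|_{C^k}$ is continuous with respect to its own topology; and the Lipschitz estimate on $\alpha^{(k)}$ is preserved because $\alpha_n^{(k)}\to\alpha^{(k)}$ uniformly, so the inequality $|\alpha_n^{(k)}(x)-\alpha_n^{(k)}(y)|\le L_3\|x-y\|_2$ passes to the limit for each fixed pair $x,y$. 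Hence $\alpha\in\tilde U^{\mathrm{ad}}$ and $\tilde U^{\mathrm{ad}}$ is closed.

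I do not expect a serious obstacle here; the statement is essentially a packaging of Arzelà--Ascoli together with the observation that all the side constraints defining $\tilde U^{\mathrm{ad}}$ are closed conditions. The one point that deserves a line of care is the equicontinuity of the intermediate-order derivatives $\alpha^{(j)}$ for $j<k$: this does not follow from $\|\alpha\|_{C^k}\le L_2$ alone in a completely formal way but does follow once one notes that the $C^k$-bound controls the $C^1$-norm of each such $\alpha^{(j)}$, hence its Lipschitz constant, on the (say, convex or at least quasiconvex) domain $\overline{\Omega}_{2d}$; if $\overline{\Omega}_{2d}$ is only assumed to be a domain, one uses that a compact connected $C^k$-region satisfies a chain condition making bounded gradients imply equicontinuity. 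This mild geometric point is the only place the argument touches anything beyond routine functional analysis.
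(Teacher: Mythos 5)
Your argument is correct and follows essentially the same route as the paper: the paper likewise applies the Arzel\`a--Ascoli theorem (repeatedly, to the derivatives up to order $k$, with equicontinuity of the top-order derivatives coming from the $L_3$-Lipschitz bound) to the larger set defined only by the norm and Lipschitz constraints, and then observes that $\tilde{U}^{\textrm{ad}}$ is a closed subset of it. Your write-up merely makes explicit the details the paper leaves implicit, including the (correctly flagged) mild geometric point that a bound on one more derivative yields equicontinuity of the lower-order derivatives on $\overline{\Omega}_{2d}$.
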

\begin{proof}
Applying the Arzela-Ascoli theorem several times to
\begin{align}\label{SO.1.2}
\left\{\alpha\in
  C^{k}(\overline{\Omega}_{2d})\,\left|\,%\alpha_{\min}\leq\alpha\leq\alpha_{\textrm{max}}\,\textrm{in }\overline{\Omega}_{2d}
  \|\alpha\|_{C^k}\leq
L_2,\left|\alpha^{(k)}(x)-\alpha^{(k)}(y)\right|\leq
L_3\|x-y\|_2\,\,\forall x,y\in\overline{\Omega}_{2d}\right.\right\} %\\
\end{align}
shows the compactness in
$\left(C^k(\overline{\Omega}_{2d}),\|\cdot\|_{C^k}\right)$.
$\tilde{U}^{\textrm{ad}}$ being a closed subset of (\ref{SO.1.2})
proves the statement of this lemma. \qed
\end{proof}

\begin{definition}\label{definition_Ck_set_convergence}\textbf{($C^k$-Convergence of Sets)}\rm\\
%\begin{equation*}
$\Omega(\alpha_n)\stackrel{\tilde{\mathcal{O}}}{\longrightarrow}\Omega(\alpha)$
as $n\rightarrow\infty$
%\end{equation*}
is defined by $\alpha_n\rightarrow\alpha$ in
$C^k(\overline{\Omega}_{2d})$ as $n\rightarrow\infty$, where
$\alpha,\alpha_n$ and $\Omega(\alpha),\Omega(\alpha_n)$ for
$n\in\mathbb{N}$ are defined
 as in Definition \ref{definition_design_variables_admissible_domains}.
\end{definition}

In order to apply regularity results of linear elasticity we have
to require a sufficiently smooth boundary of the admissible
domains. Therefore, additional boundary conditions on the design
variables $\alpha$ are introduced which enable the construction of
such domains.

\begin{definition}\label{definition_Ck_admissible_domains}\textbf{($C^k$-Admissible Domains)}\rm\\
Let $\tilde{U}^{\textrm{ad}}$ be the set of design variables of
Definition \ref{definition_design_variables_admissible_domains}
and let $S_\beta:\partial\Omega_{2d}\rightarrow\mathbb{R}$ be
functions for multi-indices $\beta$ with $1\leq |\beta|\leq k$.
Define
$U^{\textrm{ad}}=\left\{\alpha\in\tilde{U}^{\textrm{ad}}\,\left|\,\nabla^\beta\alpha|_{\partial\Omega_{2d}}=S_\beta\,\,\forall
|\beta|\,\in\{1,...,k\}\right.\right\}$. Choosing $S_\beta$ so
that $\Omega(\alpha)$ has a $C^k$-boundary for every $\alpha\in
U^{ad}$ the set $\mathcal{O}=\{\Omega(\alpha)\,|\,\alpha\in
U^{\textrm{ad}}\}$ denotes the family of so-called
$C^k$-admissible domains.
\end{definition}

\begin{lemma}\label{lemma_compact_Ck_admissible_shapes}
$U^{\textrm{ad}}$ is compact in
$\left(C^k(\overline{\Omega}_{2d}),\|\cdot\|_{C^k}\right)$.
\end{lemma}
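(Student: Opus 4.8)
The plan is to piggyback on Lemma~\ref{lemma_compact_admissible_shapes}, which already gives compactness of the larger set $\tilde{U}^{\textrm{ad}}$ in $\left(C^k(\overline{\Omega}_{2d}),\|\cdot\|_{C^k}\right)$. Since $U^{\textrm{ad}}\subseteq\tilde{U}^{\textrm{ad}}$ by construction, it suffices to show that $U^{\textrm{ad}}$ is a closed subset of $\tilde{U}^{\textrm{ad}}$ with respect to the $C^k$-topology, because a closed subset of a compact set is itself compact.

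To establish closedness, I would take an arbitrary sequence $(\alpha_n)_{n\in\mathbb{N}}$ in $U^{\textrm{ad}}$ with $\alpha_n\rightarrow\alpha$ in $C^k(\overline{\Omega}_{2d})$ and verify that the limit $\alpha$ again lies in $U^{\textrm{ad}}$. First, $\tilde{U}^{\textrm{ad}}$ is closed (being compact in a metric space, or directly by the argument in Lemma~\ref{lemma_compact_admissible_shapes}), so $\alpha\in\tilde{U}^{\textrm{ad}}$. It then only remains to check the additional boundary constraints $\nabla^\beta\alpha|_{\partial\Omega_{2d}}=S_\beta$ for $1\leq|\beta|\leq k$. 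Convergence in $C^k(\overline{\Omega}_{2d})$ means precisely that $\nabla^\beta\alpha_n\rightarrow\nabla^\beta\alpha$ uniformly on $\overline{\Omega}_{2d}$ for every such $\beta$; restricting to $\partial\Omega_{2d}\subseteq\overline{\Omega}_{2d}$ preserves uniform convergence, so $\nabla^\beta\alpha_n|_{\partial\Omega_{2d}}\rightarrow\nabla^\beta\alpha|_{\partial\Omega_{2d}}$ uniformly. Since $\nabla^\beta\alpha_n|_{\partial\Omega_{2d}}=S_\beta$ for all $n$, the uniform limit of the constant sequence $(S_\beta)_n$ yields $\nabla^\beta\alpha|_{\partial\Omega_{2d}}=S_\beta$. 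Hence $\alpha\in U^{\textrm{ad}}$, so $U^{\textrm{ad}}$ is closed in $\tilde{U}^{\textrm{ad}}$ and therefore compact.

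There is no real obstacle in this argument; the only point that deserves a word is that the map $\alpha\mapsto\nabla^\beta\alpha|_{\partial\Omega_{2d}}$ is continuous from $\left(C^k(\overline{\Omega}_{2d}),\|\cdot\|_{C^k}\right)$ into $\left(C^0(\partial\Omega_{2d}),\|\cdot\|_{\infty}\right)$ for $|\beta|\leq k$, which is immediate from the definition of the $C^k$-norm. Everything else reduces to the elementary topological fact that closed subsets of compact sets are compact, already invoked in the proof of Lemma~\ref{lemma_compact_admissible_shapes}.
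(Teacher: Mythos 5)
Your argument is correct and coincides with the paper's own proof, which also obtains compactness of $U^{\textrm{ad}}$ by noting it is a closed subset of the compact set $\tilde{U}^{\textrm{ad}}$ from Lemma \ref{lemma_compact_admissible_shapes}. Your verification that the boundary conditions $\nabla^\beta\alpha|_{\partial\Omega_{2d}}=S_\beta$ survive $C^k$-limits simply fills in the closedness claim that the paper states without detail.
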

\begin{proof}
Note that $U^{\textrm{ad}}$ is a closed subset of
$\tilde{U}^{\textrm{ad}}$, where $\tilde{U}^{\textrm{ad}}$ is
already compact according to Lemma
\ref{lemma_compact_admissible_shapes}. \qed
\end{proof}

\section{Uniform Schauder Estimates}\label{section_schauder_estimates}

In this section, we review regularity results in linear elasticity
and show that certain estimates that lead to the mentioned
regularity are uniform in ${\cal O}$. From the shape optimization
prospective, these results will influence our choice of state
space and of the definition of convergence of functions with
variable domains. But they also provide the necessary input in
order to define what is meant by sufficiently regularity in
Definition \ref{stress_models} and provide candidates for the
vector spaces $\mathscr{V}_{\rm vol/surf}$ in that definition for local
crack initiation models of order up to three.

Recall the mixed problem (\ref{Thermoelasticity.1.1}). Regularity
results for the mixed problem depend crucially on the properties
of the domain's boundary in which the elasticity equations are
posed. Theorem 6.3-5 of \cite{Elasticity_Ciarlet_1} ensures the
existence of a weak solution of the mixed problem:

\begin{theorem}\label{theorem_existence_weak_solution} \textbf{(Existence of a Weak Solution)}\\
Let $\Omega\subset\mathbb{R}^n$ be a domain with a Lipschitz
continuous boundary and let
$\partial\Omega_D\subset\partial\Omega$ be measurable where
$\partial\Omega_D$ has a positive area. Let the Lam\'e
coefficients $\lambda,\,\mu$ be positive constants and let $f\in
[L^{6/5}(\Omega)]^3$, $g\in[L^{4/3}(\partial\Omega_N)]^3$ where
$\partial\Omega_N=\partial\Omega\setminus\partial\Omega_D$.
Moreover, define on $V_{\textrm{\emph{DN}}}=\left\{v\in
[H^1(\Omega)]^3\,|\,v=0\textrm{ a.e. on }\partial\Omega_D\right\}$
\begin{align*}
%&V_{\textrm{\emph{DN}}}=\left\{v\in [H^1(\Omega)]^3\,|\,v=0\textrm{ a.e. on }\partial\Omega_D\right\},\\
&B(u,v)=\int_\Omega\lambda(\nabla\cdot u) (\nabla\cdot v)\,\,dx+\int_\Omega 2\mu\,{\rm tr}(\varepsilon(u)\varepsilon(v))dx,\\
&L(v)=\int_\Omega f\cdot v\,\,dx+\int_{\partial\Omega_N}g\cdot
v\,\,dA.
\end{align*}
Then, there exists a unique $u\in V_{\textrm{\emph{DN}}}$ that
satisfies
\begin{align}\label{Elasticity.1.2}
B(u,v)=L(v)\quad\forall v\in V_{\textrm{\emph{DN}}}
\end{align}
and additionally $J(u)=\inf\{J(v)\,|\,v\in
V_{\textrm{\emph{DN}}}\}$, where $J(v)=\frac{1}{2}B(v,v)-L(v)$.
\end{theorem}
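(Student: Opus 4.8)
\textbf{Proof plan for Theorem \ref{theorem_existence_weak_solution}.}

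The plan is to verify the hypotheses of the Lax--Milgram lemma for the bilinear form $B$ and the linear functional $L$ on the Hilbert space $V_{\textrm{DN}}$, and then to observe that the unique solution of the variational equation also minimizes the quadratic energy $J$. The first routine steps are to check that $V_{\textrm{DN}}$ is a closed subspace of $[H^1(\Omega)]^3$ (hence itself a Hilbert space with the $H^1$-norm): closedness follows because the Dirichlet trace operator $v\mapsto v|_{\partial\Omega_D}$ is continuous from $[H^1(\Omega)]^3$ into $[L^2(\partial\Omega_D)]^3$ on a Lipschitz domain, so its kernel is closed. Next I would check that $B$ is bilinear, symmetric, and bounded: symmetry is immediate from the symmetry of $\varepsilon(u)\varepsilon(v)$ under interchange and of the divergence term; boundedness follows from Cauchy--Schwarz together with $\|\varepsilon(v)\|_{L^2}\le\|\nabla v\|_{L^2}$ and $\|\nabla\cdot v\|_{L^2}\le \sqrt{3}\,\|\nabla v\|_{L^2}$, giving $|B(u,v)|\le C(\lambda,\mu)\|u\|_{H^1}\|v\|_{H^1}$. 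Boundedness of $L$ comes from the Sobolev embedding $H^1(\Omega)\hookrightarrow L^{6}(\Omega)$ (so $L^{6/5}$ volume loads pair continuously with $v$) and the trace embedding $H^1(\Omega)\hookrightarrow L^{4}(\partial\Omega)$ (so $L^{4/3}$ surface loads pair continuously with $v|_{\partial\Omega_N}$), both valid on bounded Lipschitz domains.

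The crux of the argument is coercivity of $B$ on $V_{\textrm{DN}}$, i.e. an estimate $B(v,v)\ge \alpha\|v\|_{H^1}^2$ for some $\alpha>0$. Since $B(v,v)=\int_\Omega\lambda(\nabla\cdot v)^2\,dx+2\mu\int_\Omega|\varepsilon(v)|^2\,dx\ge 2\mu\int_\Omega|\varepsilon(v)|^2\,dx$ (using $\lambda>0$), the whole difficulty reduces to controlling the full $H^1$-norm by the $L^2$-norm of the symmetrized gradient on $V_{\textrm{DN}}$. This is exactly \emph{Korn's inequality} (the second Korn inequality, or Korn's inequality on a subspace), whose validity here rests on two facts: that $\partial\Omega_D$ has positive surface measure, which removes the rigid-body motions from $V_{\textrm{DN}}$ (a nonzero rigid motion $v(x)=a+b\wedge x$ cannot vanish on a set of positive measure unless $a=b=0$), and a compactness argument (Rellich--Kondrachov) to upgrade the ``$\|v\|_{H^1}^2\le C(\|\varepsilon(v)\|_{L^2}^2+\|v\|_{L^2}^2)$'' form of Korn to the homogeneous form on the complement of the rigid motions. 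I expect this Korn-inequality step to be the main obstacle, or at least the only non-mechanical ingredient; everything else is bookkeeping. I would either cite it directly (it is standard, e.g.\ in Ciarlet's treatise from which the theorem is quoted) or sketch the standard contradiction argument: if it failed there would be a sequence $v_j\in V_{\textrm{DN}}$ with $\|v_j\|_{H^1}=1$ and $\|\varepsilon(v_j)\|_{L^2}\to0$; a subsequence converges weakly in $H^1$ and strongly in $L^2$ to some $v$ with $\varepsilon(v)=0$, hence $v$ is a rigid motion; $v\in V_{\textrm{DN}}$ forces $v=0$; but then the first Korn inequality gives $\|v_j\|_{H^1}^2\le C(\|\varepsilon(v_j)\|_{L^2}^2+\|v_j\|_{L^2}^2)\to 0$, contradicting $\|v_j\|_{H^1}=1$.

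With boundedness and coercivity of $B$ and boundedness of $L$ in hand, the Lax--Milgram lemma yields a unique $u\in V_{\textrm{DN}}$ solving \eqref{Elasticity.1.2}. Finally, because $B$ is symmetric and coercive, the functional $J(v)=\tfrac12 B(v,v)-L(v)$ is strictly convex and, by coercivity, bounded below and coercive in the norm, so it has a unique minimizer $u^*$; writing out $J(u^*+tw)$ for $w\in V_{\textrm{DN}}$ and setting the derivative at $t=0$ to zero shows $B(u^*,w)=L(w)$ for all $w$, so $u^*$ satisfies \eqref{Elasticity.1.2} and therefore $u^*=u$ by uniqueness. This gives simultaneously the variational characterization and the energy-minimization characterization, completing the proof.
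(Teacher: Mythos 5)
Your proposal is correct: the paper itself gives no proof of this statement but imports it as Theorem 6.3-5 of Ciarlet's treatise, and the Lax--Milgram-plus-Korn argument you outline (closedness of $V_{\textrm{DN}}$, boundedness of $B$ and $L$ via Sobolev and trace embeddings, coercivity from Korn's inequality with the positive-area Dirichlet part eliminating rigid motions, then the symmetric-coercive equivalence with minimizing $J$) is exactly the standard proof behind that citation, consistent with the Korn inequality the paper quotes immediately afterwards. No gaps worth flagging.
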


The unique solution $u\in V_{\textrm{DN}}$ of
(\ref{Elasticity.1.2}) is called the weak solution of
(\ref{Thermoelasticity.1.1}). The following inequality can be
found in \cite{Braess}: If $\Omega\subset\mathbb{R}^3$ is a domain
the so-called Korn's second inequality
\begin{align}\label{theorem_Korns_first_inequality}
c\|v\|_{[H^1_0(\Omega)]^3}\leq\left(\int_\Omega{\rm tr}(\varepsilon(v)^2)dx\right)^{1/2}
=\|\varepsilon(v)\|_{H^0(\Omega)}
\end{align}
holds for all $v\in V_{\textrm{DN}}$. We now consider the
co-called disjoint displacement-traction problem of linear
elasticity (\ref{Thermoelasticity.1.1}), where
$\overline{\partial\Omega_D}\cap\overline{\partial\Omega_D}=\emptyset$.
This BVP will have additional regularity properties if
$\partial\Omega$ and the forces $f$ and $g$ are sufficiently
regular, confer Theorem 6.3-6 of \cite{Elasticity_Ciarlet_1} and
remarks thereafter:

\begin{theorem}\label{corollary_regularity_weak_solution}
\textbf{(Regularity for the Disjoint Displacement-Traction Problem)}\\
Let $\Omega\subset\mathbb{R}^3$ be a domain with a $C^4$-boundary,
let $f\in [W^{2,p}(\Omega)]^3$ and let $g\in
[W^{1-1/p,p}(\partial\Omega)]^3$ for some $p\geq6/5$. Consider on
$\Omega$ a disjoint displacement-traction problem. Then, there
exists a unique solution $u\in V_{\textrm{\emph{DN}}}$ of
$B(u,v)=L(v)$ for all $v\in V_{\textrm{\emph{DN}}}$, where
$V_{\textrm{\emph{DN}}},B$ and $L$ are defined as in Theorem
\ref{theorem_existence_weak_solution}. Moreover, $u$ is an element
of $[W^{4,p}(\Omega)]^3$.
\end{theorem}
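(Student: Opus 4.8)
\textbf{Proof proposal for Theorem \ref{corollary_regularity_weak_solution}.}

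The plan is to reduce the claim to known regularity results for elliptic systems satisfying the Agmon--Douglis--Nirenberg (ADN) complementing conditions, rather than re-proving elliptic regularity from scratch. First I would recall that existence and uniqueness of the weak solution $u\in V_{\mathrm{DN}}$ is already guaranteed by Theorem \ref{theorem_existence_weak_solution}, since the hypotheses $f\in[W^{2,p}(\Omega)]^3\subset[L^{6/5}(\Omega)]^3$ and $g\in[W^{1-1/p,p}(\partial\Omega)]^3\subset[L^{4/3}(\partial\Omega_N)]^3$ hold for $p\geq6/5$ by Sobolev embedding on the bounded $C^4$-domain $\Omega$ and its boundary; so only the interior and boundary regularity $u\in[W^{4,p}(\Omega)]^3$ remains to be shown. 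The key structural fact is that the Lam\'e operator $\mathcal{L}u=\nabla\cdot\sigma(u)=\mu\Delta u+(\lambda+\mu)\nabla(\nabla\cdot u)$ is a second-order, properly elliptic linear system with constant coefficients (ellipticity following from $\mu>0$ and $\lambda+\mu>0$, i.e. $\lambda+2\mu>0$), and that the mixed Dirichlet condition on $\partial\Omega_D$ together with the traction (Neumann-type) condition $\sigma(u)\cdot\nu=g$ on $\partial\Omega_N$ satisfies the complementing (Shapiro--Lopatinskii) condition — this is classical for linear elasticity and is exactly the setting of \cite{Agmon_Douglis_Nirenberg_2}.

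The main steps, in order, would be: (1) verify that $(\mathcal{L},\text{boundary operators})$ form an ADN-elliptic boundary value problem, citing the constant-coefficient ellipticity and the known verification of the complementing condition for displacement and traction boundary operators; (2) invoke the $L^p$ a priori estimate and regularity theorem of \cite{Agmon_Douglis_Nirenberg_2} — in the form stated e.g. as Theorem 6.3-6 in \cite{Elasticity_Ciarlet_1} — which says that if the right-hand side data $f$ lies in $[W^{m,p}(\Omega)]^3$ and the boundary data $g$ in $[W^{m+1-1/p,p}(\partial\Omega)]^3$ and $\partial\Omega$ is of class $C^{m+2}$, then the weak solution lies in $[W^{m+2,p}(\Omega)]^3$; (3) specialize to $m=2$: with $f\in[W^{2,p}(\Omega)]^3$, $g\in[W^{1-1/p,p}(\partial\Omega)]^3$ — wait, here one must be careful — and $\partial\Omega\in C^4$, conclude $u\in[W^{4,p}(\Omega)]^3$. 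A subtlety worth flagging: in the disjoint case $\overline{\partial\Omega_D}\cap\overline{\partial\Omega_N}=\emptyset$ the two boundary portions are separated by a positive distance, so there is no corner/interface where the boundary conditions change type; this is precisely what allows the full global $W^{4,p}$ regularity up to the boundary, and it is why the disjointness hypothesis appears in the statement. The argument then proceeds by covering $\overline{\Omega}$ with finitely many charts — interior balls, boundary charts meeting only $\partial\Omega_D$, and boundary charts meeting only $\partial\Omega_N$ — applying the interior and the respective half-space boundary estimates in each, and patching via a partition of unity.

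The main obstacle is really a matter of bookkeeping rather than a deep difficulty: one must match the regularity indices correctly. The ADN boundary regularity theorem requires the boundary datum for a second-order system with $u\in W^{4,p}$ to be compatible in the trace sense, i.e. $\sigma(u)\cdot\nu|_{\partial\Omega}\in W^{4-1-1/p,p}(\partial\Omega)=W^{3-1/p,p}(\partial\Omega)$, which forces $g\in[W^{3-1/p,p}(\partial\Omega)]^3$; but the theorem as stated assumes only $g\in[W^{1-1/p,p}(\partial\Omega)]^3$. So the honest reading is that the statement intends the hypothesis $g\in[W^{3-1/p,p}(\partial\Omega)]^3$ (the level consistent with $W^{4,p}$-regularity and with the cited Theorem 6.3-6 of \cite{Elasticity_Ciarlet_1}), and the proof should simply cite that theorem verbatim after checking its hypotheses are met. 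Concretely, then, the proof I would write is short: cite Theorem \ref{theorem_existence_weak_solution} for existence/uniqueness; observe $\Omega$ is a bounded $C^4$-domain and the problem is a disjoint displacement-traction problem so that the elasticity system with its boundary operators is ADN-elliptic with the complementing condition satisfied; apply the global elliptic regularity theorem for such systems (Theorem 6.3-6 of \cite{Elasticity_Ciarlet_1}, based on \cite{Agmon_Douglis_Nirenberg_2}) with the given data regularity to obtain $u\in[W^{4,p}(\Omega)]^3$; done. The genuinely novel uniform-in-$\Omega$ version of these estimates, needed for the compactness of $\mathcal{G}$, is deferred to the subsequent sections, so here no quantitative control of constants is attempted.
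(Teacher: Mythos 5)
Your proposal takes essentially the same route as the paper: the paper does not actually prove this theorem but quotes it from Theorem 6.3-6 of \cite{Elasticity_Ciarlet_1} and the remarks thereafter, i.e.\ from the Agmon--Douglis--Nirenberg regularity theory for elliptic systems satisfying the supplementary and complementing conditions --- exactly the citation-based argument you give after settling existence and uniqueness via Theorem \ref{theorem_existence_weak_solution}. Your observation that the conclusion $u\in[W^{4,p}(\Omega)]^3$ presupposes the stronger boundary datum $g\in[W^{3-1/p,p}(\partial\Omega)]^3$ (the hypothesis as printed matches only the $W^{2,p}$ level) is a correct reading of the cited source, so the statement should indeed be understood with that index.
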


The key to the proof is to employ the fact that the previous
problems (pure displacement, pure traction, disjoint
displacement-traction problem) are uniformly elliptic and satisfy
the so-called supplementary and complementing conditions. These
conditions are introduced in \cite{Agmon_Douglis_Nirenberg_2},
where Schauder estimates are also described in detail which can be
applied to solutions of mixed problems. The Schauder estimates are
an important ingredient in our proof of existence for optimal
shapes in this section.

From now on, we consider on $C^4$-admissible shapes\footnote{$C^4$
is needed for Theorem \ref{corollary_regularity_weak_solution}.
Therefore, set $k=4$ in the definition of $\mathcal{O}$.}
$\Omega(\alpha)\in\mathcal{O}$ the disjoint displacement-traction
problem of linear elasticity as follows:

\begin{definition}\label{state_problem}\textbf{(State Problem ${\cal P}(\alpha)$)}

 {\rm
\noindent The state problem ${\cal P}(\alpha)$ for a
$C^k$-admissible shape $\Omega(\alpha)\in{\cal O}$ is defined to
be given by the elasticity equation (\ref{Thermoelasticity.1.1})
with $\Omega=\Omega(\alpha)$ where
 $\partial\Omega(\alpha)_D=\partial B(z,r)$ is the complete
interior boundary,
$\partial\Omega(\alpha)_N=\partial\Omega(\alpha)\setminus\partial\Omega(\alpha)_D$
the exterior boundary\footnote{Note that this decomposition of the
boundary depends continuously on $\alpha\in U^{ad}$ and the
two-dimensional Lebesgue measure of $\partial\Omega(\alpha)_D$ is
greater than a positive constant for all $\alpha\in U^{ad}$.} and
where $\nu$ is the normal of $\partial\Omega_N(\alpha)$. See
Figure \ref{Figure_admissible_shape}, too. We choose
$V(\Omega(\alpha))=[C^{3}(\overline{\Omega(\alpha)})]^3$ as state
space\footnote{This allows to analyze third order local crack
initiation models.} for $\mathcal{P}(\alpha)$. }
\end{definition}

As the crucial step in this section, we use the Schauder estimates
of Theorem 9.3 in \cite{Agmon_Douglis_Nirenberg_2} and validate if
the corresponding assumptions are satisfied. At first, we present
two lemmas which show the existence of sufficiently regular
hemisphere transformations -- confer Definition
\ref{definition_hemisphere_property}  --  and the validity of a
certain inequality, respectively. These two lemmas will be used in
the proof for the next theorem. In the following statements Banach
spaces $C^{q,\phi}$ of H\"older continuous $C^q$-functions for
$\phi\in(0,1)$ occur, whose definition can e.g.\ be found in
Section 1 of \cite{Alt} and in Section 7 of
\cite{Agmon_Douglis_Nirenberg_2}.

\begin{lemma}\label{lemma_hemisphere_trafo}
Each $\Omega\in\mathcal{O}$ satisfies a hemisphere property where
the corresponding hemisphere transformations are of class
$C^{3,\phi}$ for $\phi\in(0,1)$ and have a uniform bound $\kappa$
with respect to $\mathcal{O}$.
\end{lemma}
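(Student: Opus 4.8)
The plan is to exhibit, for each $\Omega=\Omega(\alpha)\in\mathcal{O}$ and each of its boundary points, an explicit hemisphere transformation (in the sense of Definition~\ref{definition_hemisphere_property}) and to check that the $C^{3,\phi}$-norms of these maps together with those of their inverses, as well as the radii of their domains, are controlled solely by the fixed data of the basic design $\hat{\Omega}$ and by the constants $r,L_1,L_2,L_3$ entering Definition~\ref{definition_design_variables_admissible_domains}, uniformly in $\alpha\in U^{\textrm{ad}}$. The key structural observation is that $\partial\Omega(\alpha)$ decomposes into three pieces: the inner sphere $\partial B(z,r)$ and the ``skirt'' $\Sigma=\partial\hat{\Omega}\cap\{x_3\le\alpha_{\min}\}$, both of which are the \emph{same} for all $\alpha$, and the ``lid'' $\Gamma(\alpha)=\{(x_1,x_2,\alpha(x_1,x_2))\,:\,(x_1,x_2)\in\overline{\Omega}_{2d}\}$, which is the graph of $\alpha$. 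The skirt and the lid meet along the curve $\partial\Omega_{2d}\times\{\alpha_{\min}\}$, and by the choice of the trace functions $S_\beta$ in Definition~\ref{definition_Ck_admissible_domains} the assembled surface $\partial\Omega(\alpha)$ is of class $C^4$, hence $C^{3,\phi}$ for every $\phi\in(0,1)$. I would also record at the outset that every $\alpha\in U^{\textrm{ad}}$ has $\|\alpha\|_{C^4(\overline{\Omega}_{2d})}\le L_2$ with $\alpha^{(4)}$ Lipschitz of constant $\le L_3$, so that $\|\alpha\|_{C^{3,\phi}(\overline{\Omega}_{2d})}\le C(L_2,L_3)$ uniformly on $U^{\textrm{ad}}$.

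On the sphere $\partial B(z,r)$ I would use fixed smooth charts; since $z$ and $r$ are fixed and $z_3+r<\alpha_{\min}$, these charts and a lower bound for their radii do not depend on $\alpha$. On the skirt $\Sigma$, away from a fixed neighbourhood of the joining curve, I would use the restrictions of the $C^4$-hemisphere charts of the fixed basic design $\hat{\Omega}$ --- whose existence and bounds come from $\hat{\Omega}$ having a $C^4$-boundary and a uniform cone property --- shrinking their domains by a fixed amount to stay inside $\{x_3<\alpha_{\min}\}\setminus\overline{B(z,r)}$. On the interior of the lid, near a boundary point $(y_1,y_2,\alpha(y_1,y_2))$ with $(y_1,y_2)\in\Omega_{2d}$ bounded away from $\partial\Omega_{2d}$, I would use the graph-flattening shear $\psi(x)=\bigl(x_1-y_1,\,x_2-y_2,\,\alpha(x_1,x_2)-x_3\bigr)$ followed by a fixed dilation; its Jacobian is unipotent with determinant $1$, so $\psi$ carries a ball onto a half-ball of radius depending only on $\|\alpha\|_{C^1}\le L_2$, and $\|\psi\|_{C^{3,\phi}}+\|\psi^{-1}\|_{C^{3,\phi}}\le C(L_2,L_3)$.

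The main obstacle is the gluing region, i.e.\ neighbourhoods of $\partial\Omega_{2d}\times\{\alpha_{\min}\}$, where $\partial\Omega(\alpha)$ is patched together from $\Sigma$ and $\Gamma(\alpha)$ and where one still needs $C^{3,\phi}$ hemisphere transformations with a uniform radius and a uniform norm. Here the point is that the full $4$-jet of $\alpha$ along $\partial\Omega_{2d}$ is \emph{fixed} on $U^{\textrm{ad}}$, being prescribed by $\alpha|_{\partial\Omega_{2d}}=\alpha_{\min}$ and $\nabla^\beta\alpha|_{\partial\Omega_{2d}}=S_\beta$; hence, by the hypothesis on the $S_\beta$, near this curve $\partial\Omega(\alpha)$ is a $C^4$-hypersurface whose local graph over its tangent plane has $C^4$-data determined by $\hat{\Omega}$, by the fixed $S_\beta$, and by the $C^4$-bounds $L_2,L_3$ on $\alpha$ in a neighbourhood of $\partial\Omega_{2d}$ --- none of which degenerates along $U^{\textrm{ad}}$. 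Flattening this local graph exactly as above gives $C^{3,\phi}$ charts whose radius is bounded below and whose norm is bounded above in terms of $\hat{\Omega}$, the $S_\beta$, and $r,L_1,L_2,L_3$ only. Since only finitely many chart types occur, it then remains to set $\kappa$ equal to the maximum of all these $C^{3,\phi}$-norms and of the reciprocals of all the chart radii; a clean way to see that the radii cannot degenerate is to first note that the uniform cone property of $\hat{\Omega}$, together with the uniform $C^4$-bounds on $\alpha$, passes to a uniform cone property valid for every $\Omega\in\mathcal{O}$, which pins down a uniform lower bound for the chart radii.
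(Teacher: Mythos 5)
Your construction is correct and takes essentially the same route as the paper: explicit graph-flattening (shear-type) charts on the part of the boundary governed by $\alpha$, fixed charts on $\partial B(z,r)$ and on the unvaried portion of $\partial\hat{\Omega}$, with all $C^{3,\phi}$-norms and chart radii controlled only by the fixed basic design and the uniform $C^4$-bounds $L_2,L_3$ on $U^{\textrm{ad}}$, and $\kappa$ obtained as a maximum over finitely many chart types. The only real divergence is the treatment of the joining curve $\partial\Omega_{2d}\times\{\alpha_{\min}\}$: the paper covers it by extending $\alpha$ to a $C^4$-function $\alpha^{\rm ext}$ on a larger parameter domain $\Omega_{2d}^{\rm ext}$ (pulled back by a fixed $C^4$-diffeomorphism $\tilde{T}_{2d}$), so that its single explicit chart $T_x$ also captures the adjacent strip of the fixed boundary, whereas you exploit the prescribed traces $S_\beta$ (the fixed $4$-jet of $\alpha$ along $\partial\Omega_{2d}$) and flatten the boundary as a graph over its tangent plane -- both devices deliver the same uniform bounds.
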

\begin{proof}
Regarding Definition \ref{definition_hemisphere_property} we have
to show that every $x\in\Omega$ within a certain distance $d>0$ of
$\partial\Omega$ has a neighborhood $U_x$ with $B(x,d/2)\subset
U_x$ and
\begin{equation*}
\overline{U}_x\cap\overline{\Omega}=T_x(\Sigma_{R(x)}),\quad
    0<R(x)\leq1,\quad\overline{U}_x\cap\partial\Omega=T_x\left(F_{R(x)}\right),
\end{equation*}
for some hemisphere\footnote{$F_{R(x)}$ denotes the flat boundary
of the hemisphere $\Sigma_{R(x)}$.} $\Sigma_{R(x)}$ and
transformations $T_x,\,T_x^{-1}$ of class $C^{3,\phi}$. At first,
we consider $x\in\Omega$ within a sufficiently small distance
$d>0$ of $\Gamma(\alpha)$, where $\Gamma(\alpha)$ denotes the
portion of $\partial\Omega=\partial\Omega(\alpha)$ which is
determined by the design variable $\alpha$, see Figure
\ref{Figure_hemisphere_transformation}.

Because of the definition of the basic design and of the
admissible shapes $\alpha\in U^{ad}$ there is a $C^4$-extension
$\alpha^{\rm ext}:\Omega_{2d}^{ext}\rightarrow\mathbb{R}$ of $\alpha$
with $\Omega_{2d}\subset\Omega_{2d}^{ext}$ which describes a
portion $\Gamma^{ext}(\alpha)$ of the boundary
$\partial\Omega(\alpha)$ beyond $\Gamma(\alpha)$ and where
$\Omega_{2d}^{ext}$ is the image of a $C^4$-diffeomorphism
$\tilde{T}_{2d}:B(0,R)\subset\mathbb{R}^2\rightarrow\Omega_{2d}^{ext}$
for some $R>0$. This extension is needed in order to
consider all $x\in\Omega(\alpha)$ within a sufficiently small
distance $d>0$ of $\Gamma(\alpha)$. Now, we are able to define a
hemisphere transformation with the required properties:
\begin{equation}\label{SO.4.1}
    T_x:\Sigma_{R}\rightarrow\mathbb{R}^2,\quad T_x(x_1,x_2,x_3)=\left(%
\begin{array}{c}
(\tilde{T}_{2d}(x_1,x_2))_1\\
(\tilde{T}_{2d}(x_1,x_2))_2\\
  \alpha^{\rm ext}((\tilde{T}_{2d}(x_1,x_2))_1,(\tilde{T}_{2d}(x_1,x_2))_2)-x_3 \\
\end{array}%
\right),
\end{equation}
see Figure \ref{Figure_hemisphere_transformation}, too.
\begin{figure}[t]%[htbp]
\centering
\scalebox{0.37}{\input{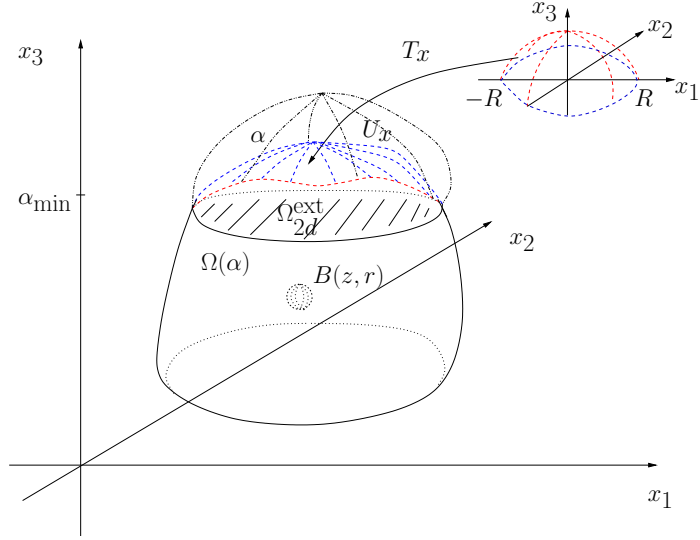}}
\caption{Hemisphere transformation.}
    \label{Figure_hemisphere_transformation}
\end{figure}
The neighborhood $U_x$ can be chosen so that
$\overline{U}_x\cap\overline{\Omega}=T_x(\Sigma_{R})$ and
$\overline{U}_x\cap\partial\Omega=T_x\left(F_{R}\right)$. This can
be achieved by sufficiently expanding $T_x(\Sigma_{R})$ beyond
$\alpha^{\rm ext}$. Because of the definition of the basic design
and of the design variables $\alpha$ one can find a bound for the
norms of the hemisphere transformations which is valid for all
$\alpha\in U^{ad}$. Analogously the remaining hemisphere
transformations can be constructed which are of a finite number
and all have a uniform bound denoted by $\kappa$, confer the last
part of the proof of Lemma \ref{lemma_compactness_graph}, too.
\qed
\end{proof}

The following lemma contains an inequality which can be found in
Section 7 of \cite{Agmon_Douglis_Nirenberg_1}. Using only a few
additional technical arguments, a statement about the inequality's
constant $C$ can be added regarding its dependency on cone
properties of the underlying domain $\Omega$.

\begin{lemma}\label{lemma_inequality}
Let $\mathcal{M}$ be a set of bounded domains in $\mathbb{R}^n$
with a uniform cone property and let $\Omega\in\mathcal{M}$. Then,
for every $\varepsilon>0$ there is a $C(\varepsilon)>0$ uniform
with respect to $\mathcal{M}$ such that
$\|v\|_{C^0(\Omega)}\leq\varepsilon\|v\|_{C^1(\Omega)}+C\int_\Omega|v|dx$
holds for all $v\in C^1(\Omega)$.
\end{lemma}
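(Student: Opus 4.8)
\emph{Proof proposal.} This is a classical interpolation (Ehrling-type) inequality; the only thing that requires attention in the present context is to organize the argument so that every constant depends solely on $n$, $\varepsilon$ and the data of the \emph{uniform} cone property of $\mathcal{M}$ (aperture and height), and never on the individual domain $\Omega$. Write $M:=\|v\|_{C^0(\Omega)}$ and $G:=\|\nabla v\|_{C^0(\Omega)}$, and note $M,G\leq\|v\|_{C^1(\Omega)}$. The plan is: pick a (near-)maximizer $x_0$ of $|v|$, drop a fixed cone at $x_0$ inside $\Omega$, observe that $|v|$ cannot decay faster than $G$ along rays out of $x_0$, hence $|v|\geq M/2$ on a small truncated cone whose radius is governed by the competition between the cone height and $M/G$, integrate this lower bound, and split the resulting estimate into the required $\varepsilon$-form by Young's inequality.

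In detail, first fix an auxiliary $\eta>0$ and choose $x_0\in\Omega$ with $|v(x_0)|\geq M-\eta$ (take $\eta=0$ if the supremum is attained; otherwise let $\eta\downarrow0$ at the end). By the uniform cone property there is a single finite cone $K$ with half-angle $\theta_0\in(0,\pi/2)$ and height $\rho>0$, both depending only on $\mathcal{M}$, such that a rigid-motion copy $K_{x_0}$ of $K$ with apex $x_0$ satisfies $K_{x_0}\subset\Omega$. Since $K_{x_0}$ is convex and contains its apex, for $y\in K_{x_0}$ the whole segment $[x_0,y]$ lies in $\Omega$, and the fundamental theorem of calculus gives $|v(y)|\geq |v(x_0)|-G\,|y-x_0|\geq (M-\eta)-G\,|y-x_0|$. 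Setting $r:=\min\{\rho,\ (M-\eta)/(2G)\}$ (with the convention $r=\rho$ if $G=0$), we obtain $|v|\geq (M-\eta)/2$ on the truncated cone $K_{x_0}\cap B(x_0,r)$, whose Lebesgue measure is $\kappa_n(\theta_0)\,r^{\,n}$ with $\kappa_n(\theta_0)>0$ depending only on $n$ and $\theta_0$. Integrating,
\[
\int_\Omega|v|\,dx\ \geq\ \frac{M-\eta}{2}\,\kappa_n(\theta_0)\,r^{\,n}.
\]

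Next comes the case distinction. If $r=\rho$ (in particular whenever $G=0$), this already yields $M-\eta\leq \frac{2}{\kappa_n(\theta_0)\rho^{\,n}}\int_\Omega|v|\,dx$, which is stronger than what is claimed. If instead $r=(M-\eta)/(2G)<\rho$, substituting gives $\int_\Omega|v|\,dx\geq \frac{\kappa_n(\theta_0)}{2^{\,n+1}}\,(M-\eta)^{\,n+1}/G^{\,n}$, hence $M-\eta\leq C_n(\theta_0)\,G^{\,n/(n+1)}\big(\int_\Omega|v|\,dx\big)^{1/(n+1)}$. Now I would apply Young's inequality with the conjugate exponents $\tfrac{n+1}{n}$ and $n+1$ together with a free scaling parameter: for every $\varepsilon>0$ this produces $M-\eta\leq \varepsilon\,G+C(\varepsilon)\int_\Omega|v|\,dx$ with $C(\varepsilon)$ depending only on $n$, $\theta_0$ and $\varepsilon$. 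Combining the two cases, using $G\leq\|v\|_{C^1(\Omega)}$, and letting $\eta\downarrow0$ gives $\|v\|_{C^0(\Omega)}\leq\varepsilon\|v\|_{C^1(\Omega)}+C(\varepsilon)\int_\Omega|v|\,dx$; since $\theta_0,\rho$ are fixed by the uniform cone property, $C(\varepsilon)$ is uniform over $\mathcal{M}$.

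I do not expect a serious obstacle here: the inequality itself is standard (cf.\ Section~7 of \cite{Agmon_Douglis_Nirenberg_1}). The one point demanding care is precisely the bookkeeping of constants --- checking that the truncated-cone volume constant $\kappa_n(\theta_0)$, the threshold radius $r$, and the Young splitting introduce only quantities controlled by $n$, $\varepsilon$ and the common cone data $(\theta_0,\rho)$, with nothing depending on the particular $\Omega\in\mathcal{M}$. The degenerate situations (vanishing gradient, or the supremum of $|v|$ not attained in $\Omega$) are harmless and are absorbed, respectively, by the convention $r=\rho$ and by the auxiliary parameter $\eta$.
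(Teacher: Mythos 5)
Your proof is correct and follows essentially the same route as the paper's: use the uniform cone data to lower-bound $\int_\Omega|v|\,dx$ by a quantity of order $\min\bigl((M/G)^n,\mathrm{const}\bigr)\,M$ coming from a cone attached to a (near-)maximizer of $|v|$, and then conclude by the same case distinction between the regime where $M/G$ limits the cone and the regime where the fixed cone dimensions do. The only cosmetic differences are that the paper fits a cone under the graph of $v$ and fixes $C$ by explicitly minimizing $\varepsilon(1+t)+Ct^{-n}$, whereas you bound $|v|\geq M/2$ on a truncated spatial cone and finish with Young's inequality, and your $\eta$-treatment of an unattained supremum spells out what the paper dismisses as ``treated similar.''
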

\begin{proof}
Without loss of generality let $v$ be non-negative. Furthermore,
assume that there is a $x_0$ with $v(x_0)=\|v\|_{C^0(\Omega)}$.
The case that there is no such $x_0$ can be treated similar.
Consider the cone with height $\|v\|_{C^0(\Omega)}$ and ground
area $B(x_0,R)$ with
\begin{align*}
R=\min\left(\frac{\|v\|_{C^0(\Omega)}}{(n\cdot\|Dv\|_{C^0(\Omega)})},C_1(\theta,h,r,n)\right)
\end{align*}
for some constant $C_1(\theta,h,r,n)$. Figure
\ref{Figure_inequality} shows for the one-dimensional case that
the ratio $\|v\|_{C^0(\Omega)}/\|Dv\|_{C^0(\Omega)}$ can be used
as radius for a cone which possesses the height
$\|v\|_{C^0(\Omega)}$ and a segment that is included under the
graph of $v$ if $\Omega=[a,b]$ is sufficiently large. Constant
$C_1(\theta,h,r,n)$ considers the case when $\Omega$ is not
sufficiently large. Thus, it follows from the cone property that
there is a constant $C_2(\theta,h,r,n)$ such that a segment of
the cone fits under the graph of $v$ %in $\overline{\Omega}$
and has the volume
\begin{equation*}
\min
\left(\frac{\|v\|_{C^0(\Omega)}^{n}}{\|Dv\|_{C^0(\Omega)}^{n}},1\right)C_2(\theta,h,r,n)\cdot\|v\|_{C^0(\Omega)}
\end{equation*}
which is a lower bound of the integral $\int_\Omega vdx$. %This
%segment of the cone fits under the graph of $v$ and its ground
%area is a subset of $\Omega$, see Figure \ref{Figure_inequality}.

\begin{figure}[htbp]
\centering
%\scalebox{0.6}{\input{sectorial_operator.pstex_t}}
\scalebox{0.45}{\input{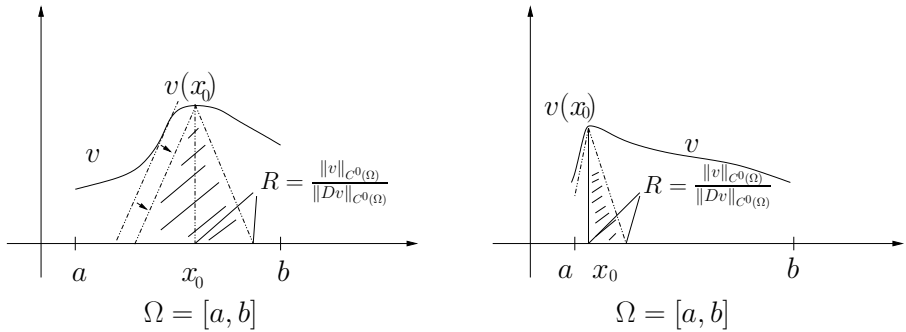}}
%\vspace{0.25cm}
\caption{Cone segment under the graph of $v$: The corresponding
ground area is determined by the greatest increase of $v$ and the
cone property of $\Omega$. If $\Omega$ is not sufficiently large a
truncated cone segment has to be considered.}
    \label{Figure_inequality}
\end{figure}

If the minimum equals the first argument we show that for every
$\varepsilon>0$ there exists a $C(\varepsilon,\theta,h,r,n)>0$
such that
\begin{align*}
\|v\|_{C^0(\Omega)}\leq\varepsilon(\|v\|_{C^0(\Omega)}+\|Dv\|_{C^0(\Omega)})+C\frac{\|v\|_{C^0(\Omega)}^{n+1}}{\|Dv\|_{C^0(\Omega)}^{n}}
\end{align*}
which is equivalent to $1\leq\varepsilon(1+t)+Ct^{-n}$ for
$t=\|Dv\|_{C^0(\Omega)}/\|v\|_{C^0(\Omega)}$. The function on the
right side has its global minimum at
$t_0=\sqrt[n+1]{nC/\varepsilon}$ for positive values of $t$.
Because the minimum is
\begin{align*}
\varepsilon\left(1+\sqrt[n+1]{\frac{nC}{\varepsilon}}\right)+\left(\frac{n}{C^{1/n}\varepsilon}\right)^{-n/(n+1)}
\end{align*}
it is greater or equal than $1$ if $C\geq(n/\varepsilon)^n$.

If the minimum of the volume of the cone segment equals
$C_2(\theta,h,r,n)\|v\|_{C^0(\Omega)}$ we show that for every
$\varepsilon>0$ there exists a $C(\varepsilon,\theta,h,r,n)>0$
such that
\begin{align*}
\|v\|_{C^0(\Omega)}\leq\varepsilon(\|v\|_{C^0(\Omega)}+\|Dv\|_{C^0(\Omega)})+C\|v\|_{C^0(\Omega)}
\end{align*}
which is equivalent to $1\leq\varepsilon(1+t)+C$ for
$t=\|Dv\|_{C^0(\Omega)}/\|v\|_{C^0(\Omega)}$. So $C\geq1$ can
additionally be required and one finally obtains
$\max((n/\varepsilon)^n,1)\leq C(\varepsilon,\theta,h,r,n)$. \qed
\end{proof}

Before applying the Schauder estimates of Theorem 9.3 in
\cite{Agmon_Douglis_Nirenberg_2} to the solutions of
$\mathcal{P}(\alpha)$, we introduce the following technical
notations: For arbitrary multi-indices $\varrho$ and $\beta$
define $\delta_{\varrho\beta}$ to be one if $\varrho=\beta$ and to
be zero in any other case. A vector and a matrix of multi-indices
is given by $\varrho(i)$ and $\beta(ij)$ for $i=1,\dots,n$ and
$j=1,\dots,m$, respectively, where each $\varrho(i)$ and
$\beta(ij)$ represents a multi-index. The $k$-th component is
given by $\varrho(i)_k$ and $\beta(ij)_k$, respectively.

\begin{theorem}\label{theorem_Schauder_estimate_applied}
Let the state problem $\mathcal{P}(\alpha)$ be given on a domain
$\Omega(\alpha)\in\mathcal{O}$. Suppose that the Lam\'e
coefficients are constants. Moreover, let $f\in
[C^{1,\phi}(\overline{\Omega^{\textrm{ext}}})]^3$ and $g\in
[C^{2,\phi}(\overline{\Omega^{\rm ext}})]^3$ and $\phi\in(0,1)$.
Then, there are hemisphere transformations of class $C^{3,\phi}$
with uniform bound $\kappa$ and a unique solution $u\in
V(\Omega(\alpha))$ of $\mathcal{P}(\alpha)$ which also belongs to
$[C^{3,\varphi}(\overline{\Omega(\alpha)})]^3$ and satisfies
\begin{equation}\label{SO.3.2}
\|u\|_{[C^{3,\varphi}(\Omega)]^3}\leq
C\left(\|f\|_{[C^{1,\phi}(\Omega)]^3}+\|g\|_{[C^{2,\phi}(\partial\Omega)]^3}
+\|u\|_{[C^0(\Omega)]^3}\right)
\end{equation}
for any $\varphi\in(0,\phi)$ and some constant $C$. The term
$\|u\|_{[C^0(\Omega)]^3}$ can be replaced by $\int_\Omega|u|dx$
and $C$ can be chosen uniformly with respect to $\mathcal{O}$.
\end{theorem}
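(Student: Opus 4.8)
The plan is to run the argument in three stages: first extract the weak solution, then promote it to a classical $[C^{3,\varphi}]^3$-solution by elliptic (Schauder/$L^p$) regularity using the smooth boundary charts, and finally derive the a priori Schauder estimate while keeping the constant $C$ independent of $\Omega\in\mathcal{O}$. The last point, the uniformity, is the real content of the theorem, and everything before it is bookkeeping.

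First I would check that the hypotheses of Theorem \ref{theorem_existence_weak_solution} hold for $\mathcal{P}(\alpha)$: since $\Omega^{\rm ext}$ is bounded, $f\in[C^{1,\phi}(\overline{\Omega^{\rm ext}})]^3\hookrightarrow[L^{6/5}(\Omega)]^3$ and $g\in[C^{2,\phi}(\overline{\Omega^{\rm ext}})]^3\hookrightarrow[L^{4/3}(\partial\Omega)]^3$ for every $\Omega\in\mathcal{O}$, the Lam\'e coefficients are fixed positive constants, and the clamped part $\partial\Omega_D=\partial B(z,r)$ has positive area, bounded below uniformly in $\alpha\in U^{\rm ad}$. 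Theorem \ref{theorem_existence_weak_solution} then gives a unique weak solution $u=u(\Omega)\in V_{\rm DN}(\Omega)$. Since any element of $V(\Omega(\alpha))=[C^3(\overline{\Omega(\alpha)})]^3$ solving $\mathcal{P}(\alpha)$ is in particular a weak solution, uniqueness in $V(\Omega(\alpha))$ will follow automatically once regularity is established.

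Next I would upgrade the regularity. By Lemma \ref{lemma_hemisphere_trafo} each $\Omega\in\mathcal{O}$ carries a finite family of $C^{3,\phi}$ hemisphere transformations with a common bound $\kappa$. I would record that the operator $u\mapsto\nabla\cdot\sigma(u)$ is a strongly (Legendre--Hadamard) elliptic second order system whose ellipticity constant depends only on $\lambda,\mu$, so it satisfies the supplementary (root) condition of \cite{Agmon_Douglis_Nirenberg_2}, and that the clamped condition $u=0$ and the traction condition $\sigma(u)\cdot\nu=g$ satisfy the complementing (Lopatinski--Shapiro) condition with constants depending only on $\lambda,\mu$ and on the unit normal $\nu$, which ranges over a compact set uniformly over $\mathcal{O}$; these are classical facts for linear elasticity. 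Flattening the boundary with the $C^{3,\phi}$ charts turns $\mathcal{P}(\alpha)$ into systems with principal coefficients in $C^{2,\phi}$ and lower-order, boundary, and right-hand-side terms controlled in terms of $\kappa$, $\lambda$, $\mu$, $\|f\|_{C^{1,\phi}}$, $\|g\|_{C^{2,\phi}}$. Interior and boundary elliptic regularity (the $L^p$ and Schauder theory of \cite{Agmon_Douglis_Nirenberg_1,Agmon_Douglis_Nirenberg_2}) then bootstraps $u$ from $[H^1(\Omega)]^3$ through $[W^{k,p}(\Omega)]^3$ and $[C^{k,\gamma}(\overline{\Omega})]^3$ up to $[C^{3,\phi}(\overline{\Omega})]^3$, and for $\varphi\in(0,\phi)$ the assertion also follows via the embedding $C^{3,\phi}\hookrightarrow C^{3,\varphi}$. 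Applying Theorem 9.3 of \cite{Agmon_Douglis_Nirenberg_2} in each chart and in the interior and summing against a subordinate partition of unity produces
\begin{equation*}
\|u\|_{[C^{3,\varphi}(\Omega)]^3}\le C\bigl(\|f\|_{[C^{1,\phi}(\Omega)]^3}+\|g\|_{[C^{2,\phi}(\partial\Omega)]^3}+\|u\|_{[C^0(\Omega)]^3}\bigr).
\end{equation*}

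The hard part will be showing that $C$ can be chosen independently of $\Omega\in\mathcal{O}$. I would have to verify that every ingredient of $C$ is uniformly bounded: the ellipticity and complementing-condition constants (depending only on $\lambda,\mu$ and the compact range of $\nu$), the H\"older norms of the transformed coefficients (bounded through $\kappa$ by Lemma \ref{lemma_hemisphere_trafo}), the number of hemisphere charts together with the norms of the subordinate partition of unity, and the cone parameters entering the ADN inequalities. This rests on the compactness of $U^{\rm ad}$ in $C^4(\overline{\Omega}_{2d})$ (Lemma \ref{lemma_compact_Ck_admissible_shapes}), on the continuous dependence of the decomposition $\partial\Omega_D\cup\partial\Omega_N$ on $\alpha$, and on the common uniform cone property of the admissible domains. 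Finally, to replace $\|u\|_{[C^0(\Omega)]^3}$ by $\int_\Omega|u|\,dx$, I would apply Lemma \ref{lemma_inequality} componentwise to $u$ with $\varepsilon>0$ small enough that the resulting term $\varepsilon\|u\|_{[C^1(\Omega)]^3}\le\varepsilon\|u\|_{[C^{3,\varphi}(\Omega)]^3}$ is absorbed into the left-hand side, the constant $C(\varepsilon)$ there being again uniform over $\mathcal{O}$ by the uniform cone property. The careful accounting that makes each of these constants uniform is precisely the technical heart of this section.
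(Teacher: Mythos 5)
Your overall strategy coincides with the paper's: verify the hypotheses of Theorem 9.3 of \cite{Agmon_Douglis_Nirenberg_2} (ellipticity, supplementary and complementing conditions, positive minor constant, hemisphere transformations of class $C^{3,\phi}$ with uniform bound $\kappa$ from Lemma \ref{lemma_hemisphere_trafo}), apply the a priori estimate, trace the uniformity of every constant back to $\lambda,\mu$, the compactness of $U^{\rm ad}$ and the uniform cone property, and use Lemma \ref{lemma_inequality} with absorption to replace $\|u\|_{[C^0(\Omega)]^3}$ by $\int_\Omega|u|\,dx$. That part is fine and matches the paper's proof.

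The gap is in what you dismiss as bookkeeping: the step ``$L^p$ and Schauder theory bootstraps $u$ from $[H^1]^3$ up to $[C^{3,\phi}(\overline{\Omega})]^3$'' with $f$ only in $[C^{1,\phi}(\overline{\Omega^{\rm ext}})]^3$ is not available with the tools quoted in the paper, and it is precisely the point that forces the paper's two-stage structure. The estimate of Theorem 9.3 in \cite{Agmon_Douglis_Nirenberg_2} is an \emph{a priori} estimate: it can only be applied once $u$ is already known to lie in $[C^{3,\varphi}(\overline{\Omega})]^3$, so it cannot itself supply the regularity. The only regularity theorem at hand, Theorem \ref{corollary_regularity_weak_solution}, requires $f\in[W^{2,p}(\Omega)]^3$, which $C^{1,\phi}$ does not guarantee; with $f\in C^{1,\phi}$ the $W^{k,p}$ ladder stops at $W^{3,p}$, i.e.\ at $C^{2,\gamma}$, short of what is needed. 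The paper therefore first proves the theorem under the stronger hypothesis $f\in[C^{2,\phi}(\overline{\Omega^{\rm ext}})]^3$ (so that $f\restriction_{\Omega(\alpha)}\in W^{2,p}$ for all $p$, Ciarlet's theorem gives $u\in[W^{4,p}]^3$, and Sobolev embedding gives $u\in[C^{3,\phi}(\overline{\Omega(\alpha)})]^3$), and only afterwards removes this hypothesis in Lemma \ref{lemmaC2C1} by approximating $f$ in $C^{1,\phi}$ by $f_n\in C^{2,\phi}$, using the uniform bound of Theorem \ref{corollary_boundedness} and Arzel\`a--Ascoli to pass to the limit; this compactness step is exactly why only $\varphi\in(0,\phi)$ (strictly less) survives in the final statement, rather than your claim that the restriction to $\varphi<\phi$ is a mere embedding remark. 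To close your proof you would either have to reproduce this approximation argument, or invoke a genuine Schauder existence/regularity theorem (not just the estimate) for the mixed displacement--traction system, which is not among the results the paper relies on and would need a precise citation and verification of its hypotheses, including uniformity over $\mathcal{O}$.
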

\begin{proof}
Let first $f\in [C^{2,\phi}(\overline{\Omega^{\rm ext}})]^3$ such
that its restriction to any $\Omega(\alpha)$ is in
$W^{2,p}(\Omega(\alpha))$ for $p>1$ arbitrary. This additional
condition will be eliminated in Lemma \ref{lemmaC2C1} below.

The existence of a unique solution $u\in V(\Omega(\alpha))$ is a
consequence of Theorem \ref{corollary_regularity_weak_solution}:
Because $\Omega(\alpha)$ has $C^4$-boundary we have a unique
solution $u\in[W^{4,p}(\Omega(\alpha))]^3$ for arbitrary
$p\geq6/5$. Then, the general Sobolev inequalities -- confer
Section 5.6 in \cite{Evans} -- will lead to $u\in
[C^{3,\phi}(\overline{\Omega(\alpha)})]^3$ %for some $\phi\in(0,1)$  nein das muss das \phi aus den annahmen sein
if $p$ is sufficiently large\footnote{Note that $f\restriction_{\Omega(\alpha)}$ and $g\restriction_{\Omega(\alpha)}$,
 are continuously differentiable.}.

Now, we show that the assumptions for the Schauder estimates of
Theorem 9.3 in \cite{Agmon_Douglis_Nirenberg_2} are satisfied.
Main assumptions are complementing and supplementary boundary
conditions, uniform ellipticity with the corresponding constant
$A$, a positive minor constant $\Delta_{\partial\Omega}$ and the
existence of hemisphere transformations\footnote{Recall Definition
\ref{definition_hemisphere_property} where also constant $d$ is
defined.} of class $C^{3,\phi}$ with corresponding norms uniformly
bounded by a constant $\kappa$, confer Sections 1, 2, 7 and 9 of
\cite{Agmon_Douglis_Nirenberg_2} for a detailed description. We
start by rewriting the equations of $\mathcal{P}(\alpha)$ in the
form of $\sum_{j=1}^3l_{ij}(x,\nabla)u_j(x)=f_i(x)$ for
$x\in\Omega$, $i=1,2,3$ and of $\sum_{j=1}^3B_{hj}(x,\nabla)u_j(x)=g_h(x)$
for $x\in\partial\Omega_N$ and $h=1,2,3$, see (1.1) and (2.1) in
\cite{Agmon_Douglis_Nirenberg_2}. Therefore, we write
\begin{align}\label{Elasticity.5.2}
\begin{split}
l_{ij}(x,\Xi)&=\sum_{|\varrho|=0}^2a_{ij,\varrho}(x)\Xi^\varrho\\
&=\sum_{|\varrho|=2}\left[\mu(\delta_{ij}(\delta_{\varrho(2,0,0)}
+\delta_{\varrho(0,2,0)}+\delta_{\varrho(0,0,2)})+\delta_{\varrho\gamma(ij)})
+\lambda\sum_{\beta(i)_i>0}\delta_{\varrho\beta(i)}\right]\Xi^\varrho,
\end{split}
\end{align}
where
$\gamma(ij)=(\delta_{1i}+\delta_{1j},\delta_{2i}+\delta_{2j},\delta_{3i}+\delta_{3j})$
and where $\sum_{\beta(i)_i>0}$ is the sum over all multi-indices
with $\beta(i)_i>0$. Corresponding to the Neumann boundary
conditions on $\partial\Omega_N$ we write
\begin{align}\label{Elasticity.5.3}
\begin{split}
B_{hj}(x,\Xi)&=\sum_{|\varrho|=0}^1b_{hj,\varrho}\Xi^\varrho=\sum_{|\varrho|=1}
(\lambda\nu_h(x)\delta_{\varrho\beta(j)}+\delta_{hj}\mu
\nu_\varrho(x)+\mu\nu_j(x)\delta_{\varrho\beta(h)}) \Xi^\varrho,
\end{split}
\end{align}
where $\beta(1)=(1,0,0),\beta(2)=(0,1,0),\beta(3)=(0,0,1),
\nu_{(1,0,0)}=\nu_1,\nu_{(0,1,0)}=\nu_2,\nu_{(0,0,1)}=\nu_3$.
Regarding Theorem 9.3 in \cite{Agmon_Douglis_Nirenberg_2} and
equations (\ref{Elasticity.5.2}) and (\ref{Elasticity.5.3}) we
have $s_i=0,t_j=2$ and $r_h=-1$ for the Neumann condition and
$r_h=-2$ for the the Dirichlet condition\footnote{Note that
homogeneous Dirichlet conditions on $\partial\Omega_D$ lead to
$B_{hj}(x,\Xi)=\delta_{hj}$, $g_h(x)=0$ for all
$x\in\partial\Omega_D$ and to $r_h=-2$.} which implies
$l_0=\max\{0,r_h\}=0$ and allows to choose $1=l\geq l_0$ for
$i,j,h\in\{1,2,3\}$.

Consider that the complementing conditions and the supplementary
boundary conditions are satisfied, confer Section 6.3 of
\cite{Elasticity_Ciarlet_1}. The existence of appropriate
hemisphere transformations of class $C^{3,\phi}$ is shown in Lemma
\ref{lemma_hemisphere_trafo} where the constants $d$ and $\kappa$
can be chosen uniformly with respect to $\mathcal{O}$. According
to Section I.3 of \cite{Thompson} the minor constant
$\Delta_{\partial\Omega}$ is positive and determined by the Lam\'e
coefficients which are constant in our case.

Now, we analyze the effect of the hemisphere transformation $T_x$
on the ellipticity constant and follow Section 9 of
\cite{Agmon_Douglis_Nirenberg_2}. Let $c(\nabla)=\sum_\phi c_\phi
\nabla^\phi=\sum_{i_1,i_2,\dots}c_{i_1,i_2,\dots}\frac{\partial^{i_1}}{\partial
x_1^{i_1}}\frac{\partial^{i_2}}{\partial x_2^{i_2}}\dots$ be an
arbitrary linear combination of differentiation operators. It is
transformed into
$\hat{c}(\hat{\nabla})=\sum_{i_1,i_2,\dots}\hat{c}_{i_1,i_2,\dots}\frac{\partial^{i_1}}{\partial
\hat{x}_1^{i_1}}\frac{\partial^{i_2}}{\partial
\hat{x}_2^{i_2}}\dots$, where $\frac{\partial}{\partial
\hat{x}_j}=\sum_{i}\frac{\partial x_i}{\partial
\hat{x}_j}\frac{\partial}{\partial x_i}$ and each
$\hat{c}_{i_1,i_2,\dots}$ is a linear combination of the
$c_{i_1,i_2,\dots}$ with coefficients that are products of the
$\frac{\partial \hat{x}_j}{\partial x_i}$. Correspondingly, we
obtain $c(\xi)=\hat{c}(\hat{\xi})$ with $\hat{\xi}=\frac{\partial
x_i}{\partial \hat{x}_j}\xi_i$. According to Section 1 in
\cite{Agmon_Douglis_Nirenberg_2} uniform ellipticity is described
by the inequality $A^{-1}\|\Xi\|_2^{2m}\leq|L(x,\Xi)|\leq
A\|\Xi\|_2^{2m}$ for all $\Xi\in\mathbb{R}^{n+1}$ and all
$x\in\overline{\Omega}$, where $L(x,\Xi)$ is the characteristic
determinant of the PDE-system. The determinant is invariant under
the hemisphere transformation in the sense that
$\hat{L}(\hat{x},\hat{\xi})=L(x,\xi)$. As the first derivatives of
$T_x$ and $T_x^{-1}$ exist and as these maps are in $x$ uniformly
bounded with respect to the norm $\|\cdot\|_{C^{3,\phi}}$ there is
a constant $\omega$ such that
$\omega^{-1}\|\hat{\xi}\|_2\leq\|\xi\|_2\leq\omega\|\hat{\xi}\|_2$
for every $x\in\mathcal{A}$, $\xi\in\Sigma_{R(x)}$ and
$\hat{\xi}=T_x(\xi)$. Confer Chapter I.6.2 of
\cite{Elliptic_PDE_Gilbarg}, too. Finally, this results in the
uniform ellipticity of the transformed system with the new
ellipticity constant $A\omega^{2m}$.

Applying the Schauder estimates to both cases of Neumann boundary
and homogeneous Dirichlet conditions yields the inequality
statement, which even holds for\footnote{Recall that we will treat
the case $f\in [C^{1,\phi}(\overline{\Omega^{\textrm{ext}}})]^3$
in Lemma \ref{lemmaC2C1} where then $0<\varphi<\phi$ is required.}
$0<\varphi\leq\phi$. A uniform choice of the constant $C$ in
(\ref{SO.3.2}) with respect to $\mathcal{O}$ is justified by the
previous analysis of the constants
$d,\kappa,\Delta_{\partial\Omega}$ and $A$. The replacement of
$\|u\|_{[C^0(\Omega)]^3}$ by $\int_\Omega|u|dx$ is a consequence
of a uniform cone property of $\mathcal{O}$ and of Lemma
\ref{lemma_inequality}. \qed
\end{proof}

By means of the following theorem and the properties of
$C^k$-admissible domains we can show compactness of
$\mathcal{G}=\{(\Omega,u(\Omega))\,|\,\Omega\in\mathcal{O}\}$
where $u(\Omega)$ uniquely solves $\mathcal{P}(\Omega)$.

\begin{theorem}\label{corollary_boundedness}
There is a positive constant $C$ such that the solution $u\in
V(\Omega(\alpha))$ of the previous theorem satisfies
$\|u\|_{[C^{3,\varphi}(\Omega)]^3}\leq C$ for any
$\varphi\in(0,\phi)$, where $C$ can be chosen uniformly with
respect to $\mathcal{O}$ and forces $f\in
[C^{1,\phi}(\overline{\Omega^{{\rm ext}}})]^3,g\in
[C^{2,\phi}(\overline{\Omega^{\rm ext}})]^3$.
\end{theorem}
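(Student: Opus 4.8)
The plan is to derive the uniform $C^{3,\varphi}$-bound directly from the Schauder estimate~(\ref{SO.3.2}) of Theorem~\ref{theorem_Schauder_estimate_applied}, whose constant $C$ is already uniform over $\mathcal{O}$; what remains is to bound the three terms on its right-hand side uniformly in $\alpha\in U^{\textrm{ad}}$ and in the forces. For the first two terms I would use that $\Omega(\alpha)\subseteq\Omega^{\rm ext}$, so $\|f\|_{[C^{1,\phi}(\Omega(\alpha))]^3}\leq\|f\|_{[C^{1,\phi}(\overline{\Omega^{\rm ext}})]^3}$, and for the surface term I would pull $g|_{\partial\Omega(\alpha)}$ back through the hemisphere charts of Lemma~\ref{lemma_hemisphere_trafo}, whose $C^{3,\phi}$-norms are bounded by $\kappa$ uniformly in $\mathcal{O}$, obtaining $\|g\|_{[C^{2,\phi}(\partial\Omega(\alpha))]^3}\leq C(\kappa)\,\|g\|_{[C^{2,\phi}(\overline{\Omega^{\rm ext}})]^3}$. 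Since Theorem~\ref{theorem_Schauder_estimate_applied} also permits $\int_{\Omega(\alpha)}|u|\,dx$ in place of $\|u\|_{[C^0(\Omega(\alpha))]^3}$, the whole problem reduces to a uniform a priori bound on $\int_{\Omega(\alpha)}|u|\,dx$, equivalently on $\|u\|_{[L^2(\Omega(\alpha))]^3}$.

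For that a priori bound I would return to the variational characterization $B(u,v)=L(v)$ of Theorem~\ref{theorem_existence_weak_solution}, which is satisfied by $u$ (it lies in $V(\Omega(\alpha))\subseteq V_{\textrm{DN}}$), and test with $v=u$. Dropping the nonnegative $\lambda$-term gives $B(u,u)\geq 2\mu\,\|\varepsilon(u)\|_{H^0(\Omega)}^2\geq 2\mu c^2\,\|u\|_{[H^1_0(\Omega)]^3}^2$ by Korn's second inequality~(\ref{theorem_Korns_first_inequality}); because the Dirichlet part $\partial\Omega(\alpha)_D=\partial B(z,r)$ is a \emph{fixed} surface of positive area contained in every admissible domain, a Poincaré inequality on $V_{\textrm{DN}}$ makes $\|\cdot\|_{[H^1_0(\Omega)]^3}$ equivalent there to the full $[H^1(\Omega)]^3$-norm. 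On the other side, $|L(u)|\leq\|f\|_{[L^2(\Omega)]^3}\|u\|_{[L^2(\Omega)]^3}+\|g\|_{[L^2(\partial\Omega_N)]^3}\|u\|_{[L^2(\partial\Omega_N)]^3}\leq C\bigl(\|f\|_{[L^2(\Omega^{\rm ext})]^3}+\|g\|_{[C^0(\overline{\Omega^{\rm ext}})]^3}\bigr)\,\|u\|_{[H^1(\Omega)]^3}$, using a trace inequality and that $\partial\Omega_N$ has uniformly bounded surface area. Cancelling one factor of $\|u\|_{[H^1(\Omega)]^3}$ yields $\|u\|_{[H^1(\Omega)]^3}\leq C(\|f\|+\|g\|)$, hence $\int_{\Omega(\alpha)}|u|\,dx\leq|\Omega^{\rm ext}|^{1/2}\|u\|_{[L^2(\Omega)]^3}\leq C$; feeding this into~(\ref{SO.3.2}) finishes the argument.

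The hard part will be the uniformity over $\mathcal{O}$ of the three geometric constants appearing above — Korn's constant $c$ in~(\ref{theorem_Korns_first_inequality}), the Poincaré constant on $V_{\textrm{DN}}$, and the trace constant $[H^1(\Omega(\alpha))]^3\to[L^2(\partial\Omega(\alpha)_N)]^3$ — each of which is a priori domain-dependent. The structural facts I would rely on are that the family $\{\Omega(\alpha):\alpha\in U^{\textrm{ad}}\}$ is uniformly bounded (all contained in $\Omega^{\rm ext}$), has a uniform cone property, and admits a uniformly finite atlas of hemisphere charts with norms bounded by $\kappa$, all of this because $\|\alpha\|_{C^k}\leq L_2$, $\alpha^{(k)}$ is uniformly Lipschitz with constant $L_3$, and $\alpha|_{\partial\Omega_{2d}}=\alpha_{\min}$ pins the boundary; moreover every admissible domain shares the fixed subset $\hat\Omega\cap\{x_3\leq\alpha_{\min}\}\setminus\overline{B(z,r)}$ and the common Dirichlet boundary $\partial B(z,r)$. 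Under such uniform regularity the usual proofs of the Korn, Poincaré and trace inequalities — either by a compactness–contradiction argument along a sequence $\alpha_n\to\alpha$ in $C^k(\overline{\Omega}_{2d})$ (recall Lemma~\ref{lemma_compact_Ck_admissible_shapes}), or by patching local estimates over the uniform hemisphere atlas exactly as in the proof of Theorem~\ref{theorem_Schauder_estimate_applied} — give constants depending only on $L_1,L_2,L_3,z,r,\alpha_{\min},\alpha_{\max}$ and the cone parameters, hence uniform in $\mathcal{O}$ and independent of $f,g$. Combining this uniform energy estimate with the force bounds and~(\ref{SO.3.2}) yields the asserted constant $C$.
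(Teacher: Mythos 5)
Your proposal is correct and follows essentially the same route as the paper: bound the $f$- and $g$-terms in the uniform Schauder estimate (\ref{SO.3.2}) by their norms on $\overline{\Omega^{\rm ext}}$, and reduce the remaining term $\int_\Omega|u|\,dx$ to a uniform $H^1$-energy estimate obtained from the weak formulation via ellipticity, Korn's second inequality and a uniform trace bound. The only difference is how the uniformity of the auxiliary constants is secured: the paper simply invokes Nitsche's result that the Korn constant is uniform for domains with a uniform cone property in the epigraph parametrization and Necas for the trace constant depending only on the uniform Lipschitz character, whereas you sketch a compactness--contradiction or chart-patching argument, which is a workable but less direct justification of the same facts.
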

\begin{proof}
First note that the norms $\|f\|_{[C^{1,\phi}(\Omega)]^3}$ and
$\|g\|_{[C^{2,\phi}(\partial \Omega)]^3}$ in (\ref{SO.3.2}) are
uniformly (in $\Omega\in{\cal O}$) bounded by
$\|f\|_{[C^{1,\phi}(\overline{\Omega^{\rm ext}})]^3}$ and
$\|g\|_{[C^{2,\phi}(\overline{\Omega^{\rm ext}})]^3}$,
respectively.

The main part of the proof thus consists of showing that there is
a constant $C$ independent of $\Omega\in\mathcal{O}$ such that
$\|u\|_{H^1(\Omega(\alpha))}\leq C$. We follow ideas of the proof
of Lemma 2.24 in \cite{Shape_Optimization_Haslinger}:

The weak formulation (\ref{Elasticity.1.2}) of
$\mathcal{P}(\alpha)$ can be rewritten in the form
\begin{align*}
\int_\Omega{\rm tr}(\sigma(u)\varepsilon(v))\,dx
=\int_\Omega f\cdot v\,dx
+\int_{\partial\Omega_N}g\cdot v\,dA\quad\forall v\in
V_{\textrm{DN}},
\end{align*}
where $\sigma_{ij}=\sum_{k,l=1}^3C_{ijkl}\varepsilon_{kl}$. The
ellipticity constant
$C_{ijkl}=\delta_{ij}\delta_{kl}\lambda+\mu(\delta_{ik}\delta_{jl}+\delta_{il}\delta_{jk})$
is given by the Lam\'e coefficients $\lambda,\mu$ and so a constant
element of $C(\overline{\Omega^\textrm{ext}})$. Moreover, the
constant satisfies the symmetries $C_{ijkl}=C_{jikl}=C_{klij}$ and
there exists a constant $q>0$ such that
$C_{ijkl}(x)\xi_{ij}\xi_{kl}\geq q\xi_{ij}\xi_{kl}$ for all
$x\in\overline{\Omega^\textrm{ext}}$. This results in
\begin{align*}
B_\alpha(v,v)=\int_{\Omega(\alpha)}{\rm tr}(\sigma(v)\varepsilon(v))dx
\geq
q\int_{\Omega(\alpha)}{\rm tr}(\varepsilon(v)^2)\,dx
=q\|\varepsilon(v)\|_{[L^2(\Omega(\alpha))]^{3\times 3}}^2
\end{align*}
for all $v\in V_{\textrm{DN}}$. Because of the assumptions for $f$
and $g$
\begin{align*}
\left|L_\alpha(v)\right|=\left|\int_{\Omega(\alpha)} f\cdot v\,dx
+\int_{\partial\Omega_N(\alpha)} g\cdot v\,dA\right|\leq
C\|v\|_{[H^1(\Omega(\alpha))]^3}
\end{align*}
holds and the constant is uniform in $\alpha\in U^{\rm ad}$ as a
result of the uniform bounds on $\|f\|_{[C^{1,\phi}(\Omega)]^3}$
and $\|g\|_{[C^{2,\phi}(\partial \Omega)]^3}$.  This independence
is a consequence of
$\|v\restriction_{\partial\Omega}\|_{[L^2(\partial\Omega)]^3}\leq
C\|v\|_{[H^1(\Omega)]^3}$ with $C$ depending only on the uniform
Lipschitz constant of the boundary, confer \cite{Necas}. By the
same reason, the volume of $\partial\Omega$ is uniformly bounded.
The previous inequalities and the weak equation
(\ref{Elasticity.1.2}) lead to
$q\|\varepsilon(u)\|_{[H^0(\Omega(\alpha))]^3}^2\leq
C\|u\|_{[H^1(\Omega(\alpha)]^3}$. This and Korn's second
inequality (\ref{theorem_Korns_first_inequality}) imply
\begin{equation*}
q\|\varepsilon(u)\|_{[H^0(\Omega(\alpha))]^3}^2\leq
C\|u\|_{[H^1(\Omega(\alpha))]^3}\leq
C\|\varepsilon(u)\|_{[H^0(\Omega(\alpha))]^3},
\end{equation*}
where $C$ also depends on the constant of Korn's second
inequality. As \cite{Nitsche} shows for the local epigraph
parametrization, this constant is uniform with respect to a class
of domains possessing a uniform cone property. Applying once more
Korn's second inequality one obtains
$\|u\|_{[H^1(\Omega(\alpha))]^3}\leq C$ with $C$ independent of
$\alpha$.

This result, the inequality $\|v\|_{L^1(\Omega)}\leq
\sqrt{\textrm{vol}(\Omega)}\,\|v\|_{L^2(\Omega)}$ for $v\in
L^2(\Omega)$ and the results of the previous theorem show the
statement of this theorem. $\qed$
\end{proof}

The following lemma closes the gap left open in the proof of
Theorem \ref{theorem_Schauder_estimate_applied}, as so far we have
only proven these statements for $f\in
[C^{2,\phi}(\overline{\Omega^{\rm ext}})]^3$ in order to be able
to apply Theorem \ref{corollary_regularity_weak_solution} -- see
the beginning of the proof of Theorem
\ref{theorem_Schauder_estimate_applied}. Since the right hand side
of the Schauder estimate (\ref{SO.3.2}) only depends on the
$C^{1,\phi}$-norm of $f$, it is possible to overcome this
restriction:

\begin{lemma}\label{lemmaC2C1}
Suppose that the statements of Theorem
\ref{theorem_Schauder_estimate_applied} and Theorem
\ref{corollary_boundedness} hold for $f\in
[C^{2,\phi}(\overline{\Omega^{\rm ext}})]^3$. Then, they extend to
$f\in [C^{1,\phi}(\overline{\Omega^{\rm ext}})]^3$ with the same
uniform constant $C$.
\end{lemma}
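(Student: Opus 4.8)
The plan is to approximate a given $f\in [C^{1,\phi}(\overline{\Omega^{\rm ext}})]^3$ by a sequence $(f_k)_{k\in\N}$ in $[C^{2,\phi}(\overline{\Omega^{\rm ext}})]^3$ with $f_k\to f$ in $[C^{1,\phi}(\overline{\Omega^{\rm ext}})]^3$ (say $f_k\to f$ in $C^{1,\phi'}$ for every $\phi'<\phi$, which is what mollification delivers, and which suffices for our purposes since we will ultimately work with $\varphi<\phi$). Such a sequence exists by standard mollification on a slightly larger domain together with an extension operator for the bounded Lipschitz domain $\Omega^{\rm ext}$. For each fixed $\Omega(\alpha)\in\mathcal{O}$ and each $k$, Theorem~\ref{corollary_regularity_weak_solution} applies to $f_k$, producing the strong solution $u_k=u_k(\alpha)\in V(\Omega(\alpha))$ of $\mathcal{P}(\alpha)$ with right-hand side $f_k$, and Theorem~\ref{theorem_Schauder_estimate_applied}, Theorem~\ref{corollary_boundedness} hold for $u_k$ with a constant $C$ uniform in $\mathcal{O}$ and in $k$ (uniformity in $k$ because $\|f_k\|_{[C^{1,\phi}(\overline{\Omega^{\rm ext}})]^3}$ is bounded).

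First I would extract the limit. Since $\|u_k\|_{[C^{3,\varphi}(\Omega(\alpha))]^3}\leq C$ uniformly in $k$ (for fixed $\alpha$), the Arzel\`a–Ascoli theorem gives a subsequence converging in $[C^{3,\varphi'}(\overline{\Omega(\alpha)})]^3$ for any $\varphi'<\varphi$ to some limit $u\in [C^{3,\varphi}(\overline{\Omega(\alpha)})]^3$. Next I would verify that $u$ solves $\mathcal{P}(\alpha)$ with right-hand side $f$: the weak formulation $B_\alpha(u_k,v)=L_{\alpha,f_k}(v)$ passes to the limit because $u_k\to u$ in $[H^1(\Omega(\alpha))]^3$ (a fortiori from $C^1$-convergence on the bounded domain) and $L_{\alpha,f_k}(v)\to L_{\alpha,f}(v)$ since $f_k\to f$ in, e.g., $[L^{6/5}(\Omega(\alpha))]^3$. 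By uniqueness of the weak solution (Theorem~\ref{theorem_existence_weak_solution}), $u=u(\alpha)$ is the solution of $\mathcal{P}(\alpha)$ for the force $f$, and in particular it lies in $V(\Omega(\alpha))=[C^{3}(\overline{\Omega(\alpha)})]^3$ and even in $[C^{3,\varphi}(\overline{\Omega(\alpha)})]^3$.

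Finally I would transfer the estimates. The Schauder bound \eqref{SO.3.2} for $u_k$ reads
\begin{equation*}
\|u_k\|_{[C^{3,\varphi}(\Omega(\alpha))]^3}\leq C\left(\|f_k\|_{[C^{1,\phi}(\Omega(\alpha))]^3}+\|g\|_{[C^{2,\phi}(\partial\Omega(\alpha))]^3}+\textstyle\int_{\Omega(\alpha)}|u_k|\,dx\right),
\end{equation*}
with $C$ uniform in $\mathcal{O}$. Letting $k\to\infty$: the left-hand side is lower semicontinuous under the $C^{3,\varphi'}$-convergence we have (the $C^{3,\varphi}$-seminorm of the limit does not exceed $\liminf$ of the seminorms), while on the right $\|f_k\|_{[C^{1,\phi}]}\to\|f\|_{[C^{1,\phi}]}$ and $\int_{\Omega(\alpha)}|u_k|\to\int_{\Omega(\alpha)}|u|$. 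This yields \eqref{SO.3.2} for $u$ with force $f\in[C^{1,\phi}(\overline{\Omega^{\rm ext}})]^3$ and the same uniform $C$; Theorem~\ref{corollary_boundedness} follows likewise by passing to the limit in the a priori bound, or simply by re-inserting the already-established uniform bounds on $\|f\|_{[C^{1,\phi}(\overline{\Omega^{\rm ext}})]^3}$ and $\|g\|_{[C^{2,\phi}(\overline{\Omega^{\rm ext}})]^3}$ into \eqref{SO.3.2}. The main obstacle is the bookkeeping of H\"older exponents: mollification does not converge in $C^{1,\phi}$ but only in $C^{1,\phi'}$ for $\phi'<\phi$, so one must be slightly careful to keep a fixed target exponent $\phi$ on the right-hand side while arguing convergence in a weaker topology — this is precisely why the statement of Theorem~\ref{theorem_Schauder_estimate_applied} was phrased with $\varphi\in(0,\phi)$ and why the footnote there flagged that $0<\varphi<\phi$ is needed here. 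A minor additional point is confirming that the approximations $f_k$ can be taken to respect whatever compatibility is implicitly used, but since the disjoint displacement-traction problem imposes no compatibility conditions on $f$, this is automatic. $\qed$
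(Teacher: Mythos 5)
Your proposal follows essentially the same route as the paper's proof: approximate $f$ by forces in $[C^{2,\phi}(\overline{\Omega^{\rm ext}})]^3$, use the uniform Schauder/boundedness results for these, extract a convergent subsequence of solutions via Arzel\`a--Ascoli, identify the limit as the solution of $\mathcal{P}(\alpha)$ for $f$, and pass the estimate (\ref{SO.3.2}) to the limit. The only (harmless) deviations are that you identify the limit through the weak formulation and uniqueness instead of pointwise convergence of derivatives, and you invoke lower semicontinuity of the H\"older norm rather than norm convergence; indeed your observation that mollification converges only in $C^{1,\phi'}$ for $\phi'<\phi$ (while keeping uniform $C^{1,\phi}$-bounds) is, if anything, more careful than the paper's bare assertion of a $C^{2,\phi}$ sequence converging in $C^{1,\phi}$.
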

\begin{proof}
Finally, we have to consider the case $f\in
[C^{1,\phi}(\overline{\Omega^{\rm ext}})]^3$. Then, there exists a
sequence $(f_n)_{n\in\mathbb{N}}\subset
[C^{2,\phi}(\overline{\Omega^{\rm ext}})]^3$ such that $f_n\to f$
in $ [C^{1,\phi}(\overline{\Omega^{\rm ext}})]^3$ as $n\to
\infty$. Given $\Omega(\alpha)$, denote the sequence of solutions
to ${\cal P}(\alpha)_n$ with volume force $f_n$ by
$u_n,\,n\in\mathbb{N}$. We can now apply Theorem
\ref{corollary_boundedness} to $u_n$ with $\varphi=\phi$ due to
$f_n\in [C^{2,\phi}(\overline{\Omega^{\rm ext}})]^3$, confer the
last paragraph in the proof of Theorem
\ref{theorem_Schauder_estimate_applied}. Thus,
$(u_n)_{n\mathbb{N}}$ is uniformly bounded in
$[C^{3,\phi}(\overline{\Omega^{\rm ext}})]^3$. According to
compact embeddings of H\"older spaces (a consequence of Arzela
Ascoli), there is a subsequence $(u_{n_k})_{n_k\in\mathbb{N}}$
such that $u_{n_k}\to u$ in $[C^{3,\varphi}(\overline{\Omega^{\rm
ext}})]^3$ for any $\varphi\in(0,\phi)$. We deduce that $u$
fulfills ${\cal P}(\alpha)$ due to pointwise convergence of first
and second derivatives. As the Schauder estimate (\ref{SO.3.2})
holds for all $u_{n_k}$, it carries over to $u$ by
$C^{3,\varphi}$-continuity in $u$ of both sides and the
$C^{1,\phi}$-continuity in $f$ of the right hand side of that
inequality. \qed
\end{proof}

\section{Existence of Optimal Shapes}\label{section_optimal_reliability}

In this final section, we exploit the results of the previous
section in order to prove existence of solutions to shape
optimization problems which are given by a very general class of
cost functionals. These cost functionals are not constrained by
convexity assumptions and the state problems are described by
mixed problems of linear elasticity, see Definition
\ref{state_problem}. We optimize shapes within the family of
$C^k$-admissible domains. The class of shape optimization problems
is large enough to include those originating from optimal
reliability of local crack initiation models up to third order,
confer Lemma \ref{opt_reliability_is_shape_opt} and Definition
\ref{stress_models}.

Since the cost functionals include surface integrals which lead to
a loss of regularity according to the trace
theorem\footnote{Confer Appendix B.3.5 in
\cite{Finite_Elements_Ern} for more details.} and (higher) derivatives of
$u$, we have to resort to strong regularity of solutions. As
already announced in Section \ref{section_SO_admissible_domains}
only a part of the boundary is subject of optimization. The
abstract setting of Section \ref{section_SO_admissible_domains}
and Theorem \ref{theorem_optimum_shape_design} determine the
structure of this section which leads to our existence results.

As our state space $V(\Omega(\alpha))$ is
$[C^{3}(\Omega(\alpha))]^3$, we employ the following definition
with $q=3$ for the convergence of functions with variable domains
in $\mathcal{O}$, also confer Section 2.5.2 in
\cite{Shape_Optimization_Haslinger}. Following the previous
section, we consider $C^4$-admissible shapes $\mathcal{O}$.

\begin{definition}\label{definition_Ck_function_convergence}\textbf{($C^{q}$-Convergence of Functions with Variable Domains)}\rm\\
Recalling the sets $\mathcal{O}$ and $\Omega^{\textrm{ext}}$ of
Definition \ref{definition_design_variables_admissible_domains}
let $p_\Omega:[C^{q}(\overline{\Omega})]^3\rightarrow
[C^{q}_0(\Omega^{\textrm{ext}})]^3$ be the extension operator
which can be derived from Lemma
\ref{appendix_function_spaces_theorem.5.1} for
$q\in\mathbb{N}\setminus\{0\}$. For $u\in
[C^{q}(\overline{\Omega})]^3$ set $u^{\textrm{ext}}=p_\Omega u$.
For $(\Omega_l)_{l\in\mathbb{N}}\subset\mathcal{O}$,
$\Omega\in\mathcal{O}$ and $(u_l)_{l\in\mathbb{N}}$ with $u_l\in
[C^{q}(\overline{\Omega}_l)]^3$, $l\in\mathbb{N}$, and
$u\in[C^{q}(\overline{\Omega})]^3$ the expression
%\begin{align*}
$u_l\rightsquigarrow u$ as $l\rightarrow\infty$
%\end{align*}
is defined by $u^{\textrm{ext}}_l\rightarrow u^{\textrm{ext}}$ in
$[C^{q}_0(\Omega^{\textrm{ext}})]^3$.
\end{definition}

\noindent \textbf{Notation (Local Cost Functional)}\\
Find an optimal shape $\Omega(\alpha)\in\mathcal{O}$ which
minimizes a local functional of the form
$J(\Omega,u)=J_{\textrm{vol}}(\Omega,u)+J_{\textrm{sur}}(\Omega,u)$
with
\begin{align}\label{cost_functionals_so}
\begin{split}
J_{\textrm{vol}}(\Omega,u)&=\int_\Omega \mathcal{F}_{\rm
vol}(x,u,\nabla u,\nabla^2
u,\nabla^3u)\,dx,\\
 J_{\textrm{sur}}(\Omega,u)&=\int_{\partial\Omega}
\mathcal{F}_{\rm sur}(x,u,\nabla u,\nabla^2
u,\nabla^3u)\,dA
\end{split}
\end{align}
and $u$ uniquely\footnote{The uniqueness of $u$ is realized by
Theorem \ref{theorem_Schauder_estimate_applied}.} given by
$\Omega(\alpha)$ as the solution of the state problem
$\mathcal{P}(\alpha)$.

\begin{lemma}\label{lemma_compactness_graph}
Let the setting of Theorem \ref{corollary_boundedness} be given on
an arbitrary sequence of domains
$(\Omega(\alpha_n))_{n\in\mathbb{N}}\subset\mathcal{O}$. For
$\phi\in(0,1)$ let $f\in
[C^{1,\phi}(\overline{\Omega^{\textrm{ext}}})]^3$ and $g\in
[C^{2,\phi}(\overline{\Omega^{\textrm{ext}}})]^3$.  Let
$(\alpha_n,u_n)_{n\in\mathbb{N}}$ be a sequence of admissible
shapes $\alpha_n\in U^{\rm ad}$ and of the corresponding solutions
$u_n\in V(\Omega(\alpha_{n}))$ of $\mathcal{P}(\alpha_n)$. Then,
there is a subsequence $(\alpha_{n_k},u_{n_k})_{n_k\in\mathbb{N}}$
such that
$\Omega(\alpha_{n_k})\stackrel{\tilde{\mathcal{O}}}{\longrightarrow}\Omega(\alpha)$
and $u_{n_k}\rightsquigarrow u$ as $n_k\rightarrow\infty$ for some
$\alpha\in U^{\rm ad}$ and for the corresponding solution $u\in
V(\Omega(\alpha))$ of $\mathcal{P}(\alpha)$.
\end{lemma}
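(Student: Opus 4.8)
The plan is to obtain the claimed subsequence in three stages: compactness of the shapes, compactness of the extended state functions via the uniform Schauder bound and Arzel\`a--Ascoli, and finally identification of the state limit as the solution on the limit shape by passing to the limit in $\mathcal{P}(\alpha)$.

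\textbf{Shapes.} By Lemma \ref{lemma_compact_Ck_admissible_shapes}, $U^{\rm ad}$ is compact in $C^4(\overline{\Omega}_{2d})$, so a subsequence satisfies $\alpha_{n_k}\to\alpha$ in $C^4(\overline{\Omega}_{2d})$ for some $\alpha\in U^{\rm ad}$; by Definition \ref{definition_Ck_set_convergence} this is precisely $\Omega(\alpha_{n_k})\stackrel{\tilde{\mathcal{O}}}{\longrightarrow}\Omega(\alpha)$. In particular the local epigraph/hemisphere parametrizations of the free boundary portions converge in $C^4$, hence the outward unit normals $\nu_{n_k}$ converge in $C^3$ on the corresponding patches, while the Dirichlet part $\partial B(z,r)$ is fixed throughout.

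\textbf{States.} Apply Theorem \ref{corollary_boundedness} together with Lemma \ref{lemmaC2C1}: for each $\varphi'\in(0,\phi)$ the solutions obey $\|u_{n_k}\|_{[C^{3,\varphi'}(\Omega(\alpha_{n_k}))]^3}\le C$ with $C$ independent of $k$. Using the extension operators $p_{\Omega(\alpha_{n_k})}$ of Lemma \ref{appendix_function_spaces_theorem.5.1} --- whose operator norms are uniform over $\mathcal{O}$ because they are built locally from the boundary parametrizations, whose $C^{3,\phi}$-norms are uniformly bounded by the constant $\kappa$ of Lemma \ref{lemma_hemisphere_trafo} --- the extensions $u_{n_k}^{\rm ext}=p_{\Omega(\alpha_{n_k})}u_{n_k}$ are bounded in $[C^{3,\varphi'}_0(\Omega^{\rm ext})]^3$, uniformly in $k$. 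Fixing $\varphi<\varphi'<\phi$, the compact embedding $C^{3,\varphi'}(\overline{\Omega^{\rm ext}})\hookrightarrow C^{3,\varphi}(\overline{\Omega^{\rm ext}})$ (Arzel\`a--Ascoli) yields a further, not relabelled, subsequence with $u_{n_k}^{\rm ext}\to\tilde u$ in $[C^{3,\varphi}_0(\Omega^{\rm ext})]^3$, in particular in $[C^{3}_0(\Omega^{\rm ext})]^3$.

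\textbf{Identification and conclusion.} The remaining, and most delicate, step is to show that $\tilde u|_{\overline{\Omega(\alpha)}}$ solves $\mathcal{P}(\alpha)$; the point is that the strong ($C^2$) convergence lets one pass to the limit in the PDE despite the moving domain. Since $u_{n_k}^{\rm ext}\to\tilde u$ in $C^2$ and $\Omega(\alpha_{n_k})\to\Omega(\alpha)$, every interior point $x\in\Omega(\alpha)$ lies in $\Omega(\alpha_{n_k})$ for $k$ large, so passing to the limit in $\nabla\cdot\sigma(u_{n_k})+f=0$ gives $\nabla\cdot\sigma(\tilde u)+f=0$ in $\Omega(\alpha)$; the homogeneous Dirichlet condition on the fixed boundary $\partial B(z,r)$ passes at once. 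For the traction condition I would pull the boundary equations $\sigma(u_{n_k})\cdot\nu_{n_k}=g$ back to the fixed reference domain via the $C^4$-converging parametrizations: there the coefficients, assembled from $\nu_{n_k}$ (converging in $C^3$) and from traces of $\nabla u_{n_k}^{\rm ext}$ (converging uniformly), converge, so the limit satisfies $\sigma(\tilde u)\cdot\nu=g$ on $\partial\Omega(\alpha)_N$. Hence $\tilde u|_{\Omega(\alpha)}\in[C^{3,\varphi}(\overline{\Omega(\alpha)})]^3\subset V(\Omega(\alpha))$ solves $\mathcal{P}(\alpha)$, and by the uniqueness in Theorem \ref{theorem_Schauder_estimate_applied} it equals $u(\Omega(\alpha))$. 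Finally, since $p_\Omega$ is constructed from the same boundary parametrizations, which converge in $C^{3,\phi}$, the limit of the extensions is the extension of the limit, $\tilde u=p_{\Omega(\alpha)}u(\Omega(\alpha))=u(\Omega(\alpha))^{\rm ext}$; therefore $u_{n_k}^{\rm ext}\to u(\Omega(\alpha))^{\rm ext}$ in $[C^{3}_0(\Omega^{\rm ext})]^3$, i.e.\ $u_{n_k}\rightsquigarrow u(\Omega(\alpha))$ in the sense of Definition \ref{definition_Ck_function_convergence}. Since this limit is uniquely determined, the same argument applied to any subsequence also verifies the consistency stipulation in that definition, completing the proof.
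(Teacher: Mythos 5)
Your proposal is correct and follows essentially the same route as the paper's proof: compactness of $U^{\rm ad}$ for the shapes, the uniform $C^{3,\varphi}$-bound from Theorem \ref{corollary_boundedness} combined with a uniformly bounded extension operator, Arzel\`a--Ascoli in $\Omega^{\rm ext}$, and identification of the limit via uniqueness of the solution of $\mathcal{P}(\alpha)$. The only differences are ones of emphasis: you spell out the passage to the limit in the interior equation and the traction condition (which the paper states in one line), and you justify the uniformity of the extension constant by appeal to the uniformly bounded boundary parametrizations rather than reproducing the explicit straightening diffeomorphisms as the paper does.
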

\begin{proof}
Due to Lemma \ref{lemma_compact_Ck_admissible_shapes} there is a
subsequence $(\alpha_{n_l})_{n_l\in\mathbb{N}}$ with
$\Omega(\alpha_{n_l})\stackrel{\tilde{\mathcal{O}}}{\longrightarrow}\Omega(\alpha)$
as $n_l\rightarrow\infty$ for some $\alpha\in U^{\rm ad}$.
According to Theorem \ref{theorem_Schauder_estimate_applied}
$u_{n_l}\in[C^{3,\varphi}(\overline{\Omega(\alpha_{n_l})})]^3$
holds for some $\varphi\in(0,\phi)$ and every $n_l\in\mathbb{N}$.
Moreover, according to Theorem \ref{corollary_boundedness} there
is a constant $C>0$ independent of every $\Omega\in\mathcal{O}$
such that
$\|u_{n_l}\|_{[C^{3,\varphi}(\Omega(\alpha_{n_l}))]^3}\leq C$.
Because of Lemma \ref{appendix_function_spaces_theorem.5.1} there
is a constant $C$ such that
$u^{\textrm{ext}}_{n_l}=p_{\Omega(\alpha_{n_l})}u_{n_l}$ satisfies
%\begin{equation*}
$\left\|u^{\textrm{ext}}_{n_l}\right\|_{[C^{3,\varphi}(\Omega^{\textrm{ext}})]^3}\leq
C\|u_{n_l}\|_{[C^{3,\varphi}(\Omega(\alpha_{n_l}))]^3}$.
%\end{equation*}
The constant can be chosen to be independent of the design
variables $\alpha$ which we will show at the end of the proof.
%Chapters I.6.2 and I.6.9 of \cite{Elliptic_PDE_Gilbarg}
The previous inequalities result in a uniform bound for all
$\left\|u^{\textrm{ext}}_{n_l}\right\|_{[C^{3,\varphi}(\Omega^{\textrm{ext}})]^3}$.
As the unit ball in $C^{3,\varphi}(\Omega^{\textrm{ext}})$ is
compact in $C^{3}(\Omega^{\textrm{ext}})$ due to the Arzela-Ascoli
theorem, we find a further subsequence
$(\alpha_{n_k},u_{n_k})_{n_k\in\mathbb{N}}$ such that
\begin{align*}
\Omega(\alpha_{n_k})\stackrel{\tilde{\mathcal{O}}}{\longrightarrow}\Omega(\alpha),\,
u^{\textrm{ext}}_{n_k}\stackrel{C^{3}}{\longrightarrow} v\textrm{
as }n_k\rightarrow\infty \textrm{ for some } v\in
[C^{3}(\Omega^\textrm{ext})]^3.
\end{align*}
Because of
$u^{\textrm{ext}}_{n_k}\stackrel{C^{3}}{\longrightarrow} v\textrm{
as }n_k\rightarrow\infty$ the function
$v|_{\overline{\Omega(\alpha)}}$ solves $\mathcal{P}(\alpha)$.
According to Theorem \ref{corollary_regularity_weak_solution}
$\mathcal{P}(\alpha)$ has a unique solution $u\in
V(\Omega(\alpha))$ and $v$ is an extension $u^\textrm{ext}$ of
$u$.

Finally, we show the statement that the constant $C$ of Lemma
\ref{appendix_function_spaces_theorem.5.1} can be chosen uniformly
with respect to the set of admissible shapes $\mathcal{O}$. The
proof of this extension lemma, confer the proof of Lemma 6.37 in
\cite{Elliptic_PDE_Gilbarg}, is based on
$C^{k,\varphi}$-diffeomorphisms $\psi$ that locally straighten the
boundary $\partial\Omega$. For $u\in
C^{k,\varphi}(\overline{\Omega})$ one considers
$\tilde{u}(y)=u\circ\psi^{-1}(y)$, where
$y=(y_1,\dots,y_{n-1},y_n)=(y',y_n)$ and $y_n>0$. An extension
into $y_n<0$ can then be defined by
\begin{equation*}
\tilde{u}(y',y_n)=\sum_{i=1}^{k+1}c_i\tilde{u}(y',-y_n/i),\quad
y_n<0,
\end{equation*}
where $c_1,\dots,c_{k+1}$ are determined by the equations
%\begin{equation*}
$\sum_{i=1}^{k+1}c_i(-1/i)^m=1$ with $m=0,\dots,k$.
%\end{equation*}
The proof in \cite{Elliptic_PDE_Gilbarg} then shows that
$w=\tilde{u}\circ\psi$ provides a $C^{k,\varphi}$ extension of $u$
into $\Omega\cup B$ for some balls $B$ and corresponding
$C^{k,\varphi}$-diffeomorphisms $\psi$. Considering a
finite-covering argument of $\partial\Omega$ and a partition of
unity (subordinate to this covering) prove the existence of an
extension $w\in C^{k,\varphi}(\Omega^{\textrm{ext}})$. The
inequality
\begin{equation}\label{equ_extension_inequ}
\|w\|_{C^{k,\varphi}(\Omega^{\textrm{ext}})}\leq
C\|u\|_{C^{k,\varphi}(\Omega)}
\end{equation}
for a constant $C$ depending only on $k,\Omega$ and
$\Omega^{\textrm{ext}}$ is a consequence of
\begin{equation}\label{equ_lipschitz}
K^{-1}\|x-y\|\leq\|x'-y'\|\leq K\|x-y\|,\quad
x'=\psi(x),\,y'=\psi(y),
\end{equation}
confer proof of Lemma 6.37 and equations (6.29) and (6.30) in
\cite{Elliptic_PDE_Gilbarg}. As we are employing a hypograph
representation for the admissible shapes we can subdivide the
problem by first considering the part of the boundary that is
fixed and then the part that varies with the admissible design
variables $\alpha\in U^{\textrm{ad}}$. The fixed part can be
treated uniformly for all admissible shapes. Thus, there are
always the same $C^{k,\varphi}$-diffeomorphisms $\psi$ with
respect to the admissible shapes for that part which can be used
for the construction of the extension as described above. For the
varying part of the boundary we explicitly give the form of the
$C^{k,\varphi}$-diffeomorphisms depending on the admissible shape.
Following the proof of Lemma 6.4 we obtain
\begin{equation*}%\label{equ_dipheomorphism}
\psi(x)=\left(%
\begin{array}{c}
  x_1 \\
  x_2 \\
  \alpha(x_1,x_2)-x_3 \\
\end{array}%
\right),\quad \psi^{-1}(x')=\left(%
\begin{array}{c}
  x'_1 \\
  x'_2 \\
  \alpha(x'_1,x'_2)-x'_3 \\
\end{array}%
\right).
\end{equation*}
Because the norms of the admissible design variables $\alpha\in
U^{\textrm{ad}}$ are uniformly bounded so are the norms of $\psi$
and $\psi^{-1}$ and thereby constant $K$ of (\ref{equ_lipschitz})
can be chosen uniformly with respect to the admissible shapes.
This finally shows that $C$ of (\ref{equ_extension_inequ}) can
also be chosen uniformly. $\qed$
\end{proof}

As already mentioned we consider functionals which are volume or
surface integrals with continuous integrands. In order to show
continuity of the functionals we apply Lebesgue's dominated
convergence theorem.

\begin{lemma}\label{lemma_continuous_functional} \textbf{(Continuity of Local Cost
Functionals)}\\
Let $\mathcal{F}_{\textrm{vol}},\mathcal{F}_{\textrm{sur}}\in
C^0(\mathbb{R}^d)$ (with $d$ as in Definition \ref{stress_models}
with $r=3$) and let the set $\mathcal{O}$ only consist of
$C^0$-admissible shapes. For $\Omega\in\mathcal{O}$ and $u\in
[C^{3}(\overline{\Omega})]^3$ consider the volume integral $J_{\rm
vol}(\Omega,u)$ and the surface integral $J_{\rm sur}(\Omega,u)$
of (\ref{cost_functionals_so}), respectively.

Let $(\Omega_{n})_{n\in\mathbb{N}}\subset\mathcal{O}$ with
$\Omega_{n}\stackrel{\tilde{\mathcal{O}}}{\longrightarrow}\Omega$
as $n\rightarrow\infty$ and let
$(u_n)_{n\in\mathbb{N}}\subset[C^{3}(\overline{\Omega}_n)]^3$ be a
sequence with $u_n\rightsquigarrow u$ as $n\rightarrow\infty$ for
some $u\in [C^{3}(\overline{\Omega})]^3$. Then,
\begin{itemize}
\item[(i)] $J_{\rm vol}(\Omega_{n},u_{n})\rightarrow
J_{\rm vol}(\Omega,u)$ as $n\rightarrow\infty$.
\item[(ii)] If the set $\mathcal{O}$ only consists of $C^1$-admissible shapes
one obtains $J_{\rm sur}(\Omega_{n},u_{n})\rightarrow
J_{\rm sur}(\Omega,u)$ as $n\rightarrow\infty$ as well.
\end{itemize}
\end{lemma}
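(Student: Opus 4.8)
The plan is to reduce both assertions to Lebesgue's dominated convergence theorem on a fixed domain of integration, using the extension operator $p_\Omega$ and the $C^3$-convergence of the extended solutions $u^{\textrm{ext}}_n$ on $\Omega^{\textrm{ext}}$ provided by Definition \ref{definition_Ck_function_convergence}. First I would fix a reference domain: for (i) take $\Omega^{\textrm{ext}}$ itself, writing $J_{\rm vol}(\Omega_n,u_n)=\int_{\Omega^{\textrm{ext}}}\mathbf{1}_{\Omega_n}(x)\,\mathcal{F}_{\rm vol}\bigl(x,u^{\textrm{ext}}_n,\nabla u^{\textrm{ext}}_n,\nabla^2u^{\textrm{ext}}_n,\nabla^3u^{\textrm{ext}}_n\bigr)\,dx$, which is legitimate because $u^{\textrm{ext}}_n$ restricts to $u_n$ on $\overline{\Omega}_n$. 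The integrand converges pointwise a.e.: the indicator $\mathbf{1}_{\Omega_n}$ converges to $\mathbf{1}_{\Omega}$ a.e.\ since $\alpha_n\to\alpha$ uniformly forces $\partial\Omega_n\to\partial\Omega$ and the boundary $\partial\Omega$ is Lebesgue-null (Lipschitz), while $u^{\textrm{ext}}_n\to u^{\textrm{ext}}$ together with its first three derivatives uniformly on $\Omega^{\textrm{ext}}$, and $\mathcal{F}_{\rm vol}\in C^0(\mathbb{R}^d)$ is continuous. A uniform $L^\infty$ dominating function exists: the $C^3$-bounds from Theorem \ref{corollary_boundedness} (pushed through the extension operator, cf.\ Lemma \ref{lemma_compactness_graph}) confine all the arguments $(x,u^{\textrm{ext}}_n,\dots,\nabla^3u^{\textrm{ext}}_n)$ to a fixed compact subset $K\subset\mathbb{R}^d$, on which $\mathcal{F}_{\rm vol}$ is bounded, and $\Omega^{\textrm{ext}}$ has finite measure. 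Dominated convergence then yields (i).

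For (ii) the same strategy applies but the integration is over the moving surface $\partial\Omega_n$, so I would pull everything back to the fixed two-dimensional parameter domain $\overline{\Omega}_{2d}$ via the graph parametrization $x\mapsto(x_1,x_2,\alpha_n(x_1,x_2))$ of $\Gamma(\alpha_n)$ (the varying part of the boundary; the fixed parts $\partial B(z,r)$ and the cylindrical sides contribute terms handled as in (i) on a fixed surface). On $\Omega_{2d}$ one has $dA_n=\sqrt{1+|\nabla\alpha_n|^2}\,dx_1dx_2$, so
\begin{equation*}
J_{\rm sur}(\Omega_n,u_n)=\int_{\Omega_{2d}}\mathcal{F}_{\rm sur}\bigl(\Psi_n(x'),u^{\textrm{ext}}_n(\Psi_n(x')),\dots\bigr)\sqrt{1+|\nabla\alpha_n(x')|^2}\,dx'
\end{equation*}
with $\Psi_n(x')=(x'_1,x'_2,\alpha_n(x'))$. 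Here the $C^1$-admissibility is exactly what is needed so that $\sqrt{1+|\nabla\alpha_n|^2}$ makes sense and converges uniformly (indeed $\alpha_n\to\alpha$ in $C^k$, $k\geq 1$, gives $\nabla\alpha_n\to\nabla\alpha$ uniformly). Each factor in the integrand converges pointwise: $\Psi_n\to\Psi$ uniformly, $u^{\textrm{ext}}_n$ and its derivatives converge uniformly on $\Omega^{\textrm{ext}}$ hence $u^{\textrm{ext}}_n\circ\Psi_n\to u^{\textrm{ext}}\circ\Psi$ and likewise for $\nabla^j u^{\textrm{ext}}_n\circ\Psi_n$, $j\le 3$, by uniform continuity, and $\mathcal{F}_{\rm sur}$ is continuous. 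A uniform dominating function over the fixed finite-measure domain $\Omega_{2d}$ is obtained as before from the compactness of the range together with the uniform bound $\|\nabla\alpha_n\|_{C^0}\le L_2$. Dominated convergence closes (ii).

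The main obstacle, and the point deserving genuine care, is the interchange of limit and surface integral over a \emph{variable} boundary: one must verify that the pullback to $\Omega_{2d}$ is uniform across the sequence — i.e.\ that the local graph charts, the Jacobian factors $\sqrt{1+|\nabla\alpha_n|^2}$, and the compositions $\nabla^j u^{\textrm{ext}}_n\circ\Psi_n$ all behave compatibly — and that the non-graph portions of $\partial\Omega_n$ (which are fixed by Definition \ref{definition_design_variables_admissible_domains}) contribute a term to which part (i)'s argument applies verbatim on a fixed surface. Everything else is the routine verification of the two hypotheses of dominated convergence (a.e.\ pointwise convergence, and an integrable dominating function), for which the uniform $C^{3,\varphi}$-bound from Theorem \ref{corollary_boundedness} and the uniform $C^k$-bound on the design variables supply exactly the compactness of the argument range that makes the continuous integrands $\mathcal{F}_{\rm vol},\mathcal{F}_{\rm sur}$ uniformly bounded along the sequence.
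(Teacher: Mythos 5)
Your proposal is correct and follows essentially the same route as the paper: rewrite $J_{\rm vol}$ over the fixed set $\Omega^{\rm ext}$ with the characteristic function of $\Omega_n$ and apply dominated convergence, and for $J_{\rm sur}$ pull the integral back to the fixed parameter domain via the graph chart $(s_1,s_2)\mapsto(s_1,s_2,\alpha_n(s_1,s_2))$ (the paper phrases this as a fixed finite atlas of $\partial\Omega_n$), using convergence and boundedness of the Gram factor $\sqrt{1+|\nabla\alpha_n|^2}$ before invoking dominated convergence again. One cosmetic remark: the dominating bound should not be drawn from Theorem \ref{corollary_boundedness}, since the $u_n$ in the lemma are arbitrary and not assumed to solve $\mathcal{P}(\alpha_n)$; the assumed convergence $u^{\rm ext}_n\to u^{\rm ext}$ in $[C^{3}_0(\Omega^{\rm ext})]^3$ already confines the arguments of $\mathcal{F}_{\rm vol},\mathcal{F}_{\rm sur}$ to a compact set, which is exactly what the paper uses.
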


\begin{proof} (i)
At first, we consider the volume integral. Using the
characteristic function one obtains
\begin{align*}
J_{\rm vol}(\Omega_n,u_n)&=\int_{\Omega^{\textrm{ext}}}\chi_{\Omega_n}\cdot
\mathcal{F}_{\textrm{vol}}(x,u^{\textrm{ext}}_n,\nabla
u^{\textrm{ext}}_n,\nabla^2 u^{\textrm{ext}}_n,\nabla^3 u^{\textrm{ext}}_n)\,dx.
\end{align*}
Because of $\mathcal{F}_{\textrm{vol}}\in C^0(\mathbb{R}^d)$ and
$u_n\rightsquigarrow u$ as $n\rightarrow\infty$ there is a
constant $C>0$ such that the inequality
$\left|\chi_{\Omega_n}(x)\cdot
\mathcal{F}_{\textrm{vol}}(x,u^{\textrm{ext}}_n(x),\nabla
u^{\textrm{ext}}_n(x),\nabla^2 u^{\textrm{ext}}_n(x))\right|\leq
C$ holds for all $n\in\mathbb{N}$ almost everywhere in
$\Omega^{\textrm{ext}}$. Moreover,
$\Omega_{n}\stackrel{\tilde{\mathcal{O}}}{\longrightarrow}\Omega$
and $u^{\textrm{ext}}_n\rightarrow u^{\textrm{ext}}$ in
$[C^{3}_0(\Omega^{\textrm{ext}})]^3$ and ${\cal F}_{\rm vol}\in
C^0(\R^d)$ ensure the existence of
\begin{align*}
&\lim_{n\rightarrow\infty}\left(\chi_{\Omega_{n}}(x)\cdot
\mathcal{F}_{\textrm{vol}}(x,u^{\textrm{ext}}_{n}(x),\nabla
u^{\textrm{ext}}_{n}(x)),\nabla^2
u^{\textrm{ext}}_{n}(x)),\nabla^3 u^{\textrm{ext}}_n(x)\right)\\
&=\chi_{\Omega}(x)\cdot
\mathcal{F}_{\textrm{vol}}(x,u^{\textrm{ext}}(x),\nabla
u^{\textrm{ext}}(x),\nabla^2 u^{\textrm{ext}}(x),\nabla^3 u^{\textrm{ext}}(x))
\end{align*}
for all $x\in\Omega^{\textrm{ext}}$. As the intrgrands are pointwise and uniformly in $\Omega^{\rm ext}$ bounded, Lebesgue's dominated
convergence theorem can now be applied to permute integral and
limit:
\begin{align*}
&\quad\lim_{n\rightarrow\infty}J_{\rm vol}(\Omega_{n},u_{n})=\lim_{n\rightarrow\infty}\int_{\Omega^{\textrm{ext}}}\chi_{\Omega_{n}}
\mathcal{F}_{\textrm{vol}}(x,u^{\textrm{ext}}_{n},\nabla u^{\textrm{ext}}_{n},\nabla^2 u^{\textrm{ext}}_{n},\nabla^3 u^{\textrm{ext}}_{n})\,dx\\
&=\int_{\Omega^{\textrm{ext}}}\,\lim_{n\rightarrow\infty}\left(\chi_{\Omega_{n}}\cdot
\mathcal{F}_{\textrm{vol}}(x,u^{\textrm{ext}}_{n},\nabla
u^{\textrm{ext}}_{n},\nabla^2
u^{\textrm{ext}}_{n},\nabla^3 u^{\textrm{ext}}_{n})\right)\,dx\\
&=\int_{\Omega}\mathcal{F}_{\textrm{vol}}(x,u,\nabla
u,\nabla^2 u,\nabla^3u)\,dx=J_{\rm vol}(\Omega,u).
\end{align*}

(ii) With respect to the surface integral similar arguments can be used
and so we only address technical steps which are special to
integrating over a surface. Because of their definition the
boundary of every $\Omega\in\mathcal{O}$ is a differentiable
submanifold and the surface integral
\begin{align*}
&J_{\rm sur}(\Omega_n,u_n)=\int_{\partial\Omega_n}
\mathcal{F}_{\textrm{sur}}(x,u_n,\nabla
u_n,\nabla^2 u_n,\nabla^3 u_n)\,dA
\end{align*}
is the sum of integrals over a finite number $|I|$ of map areas
$\{\mathcal{A}^i_n\}_{i\in I}$. This number is the same for every
$\Omega\in\mathcal{O}$ due to the definition of the basic design
$\hat{\Omega}$ and of the design variables $\alpha\in U^{\rm ad}$. The
integrals have the form
\begin{align*}
\int_{\mathcal{A}^i_n}\mathcal{F}_{\textrm{sur}}(\varphi_n(s),u_n(\varphi_n(s)),\nabla
u_n(\varphi_n(s)),\nabla^2
u_n(\varphi_n(s),\nabla^3
u_n(\varphi_n(s)))\sqrt{g^{\varphi_n}(s)}ds
\end{align*}
where $\varphi^i_n:\mathcal{A}^i_n\rightarrow\mathbb{R}^3$ is a
chart and $g^{\varphi_n}(s)$ the corresponding Gram determinant.
The map
$\varphi_n^{\alpha}:\overline{\Omega}_{2d}\rightarrow\mathbb{R}^3,\varphi_n^{\alpha}(s_1,s_2)=(s_1,s_2,\alpha_n(s_1,s_2))^T$,
is the chart for the portion of the boundary $\partial\Omega_n$
which is described by the design variable %\footnote{This is the
%only part of the boundary which varies over the different domains
%of $\mathcal{O}$.}
$\alpha_n\in C^1(\overline{\Omega}_{2d})$.
Because of $\mathcal{F}_{\textrm{sur}}\in C^0(\mathbb{R}^{d})$ and
$u_n\rightsquigarrow u$ as $n\rightarrow\infty$ and because of the
fact that $g^{\varphi^\alpha_n}$ is bounded we can apply
Lebesgue's dominated convergence theorem again to prove $J_{\rm
sur}(\Omega_n,u_n)\rightarrow J_{\rm sur}(\Omega,u)$ as
$n\rightarrow\infty$. \qed
\end{proof}

Now, we can prove the existence of optimal shapes and present our
main result.

\begin{theorem}\label{theorem_existence_optimal_shape}\textbf{(Existence of Optimal
Shapes for SO Problems in Linear Elasticity)}\\
Let $\mathcal{F}_{\rm vol},\mathcal{F}_{\rm sur}\in
C^0(\mathbb{R}^d)$ and recall $\tilde{\mathcal{O}},\,\mathcal{O}$
of the previous definitions\footnote{$\mathcal{O}$ consists of
$C^4$-admissible shapes.}. Consider the state problem
$\mathcal{P}(\alpha)$ with $f\in
[C^{1,\phi}(\overline{\Omega^{\textrm{ext}}})]^3$ and $g\in
[C^{2,\phi}(\overline{\Omega^{\textrm{ext}}})]^3$ for some
$\phi\in(0,1)$ and consider the cost functional
$J(\Omega,u)=J_{\rm vol}(\Omega,u)+J_{\rm sur}(\Omega,u)$ with
\begin{align*}
\begin{split}
J_{\textrm{vol}}(\Omega,u)&=\int_\Omega \mathcal{F}_{\rm
vol}(x,u,\nabla u,\nabla^2
u,\nabla^3u)\,dx,\\
 J_{\textrm{sur}}(\Omega,u)&=\int_{\partial\Omega}
\mathcal{F}_{\rm sur}(\cdot,u,\nabla u,\nabla^2
u,\nabla^3u)\,dA
\end{split}
\end{align*}
as defined in (\ref{cost_functionals_so}) and with $u=u(\Omega)$
the unique solution of $\mathcal{P}(\alpha)$. Then, there exists
an optimal shape $\Omega^*\in\mathcal{O}$ such that
$J(\Omega^*,u(\Omega^*))\leq J(\Omega,u(\Omega))$ for all
$\Omega\in\mathcal{O}$.
\end{theorem}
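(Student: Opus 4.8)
The plan is to verify the two hypotheses of the abstract existence result, Theorem~\ref{theorem_optimum_shape_design}, in the concrete setting at hand and then to invoke that theorem. Here $\tilde{\mathcal{O}}$ is the family of admissible domains of Definition~\ref{definition_design_variables_admissible_domains}, $\mathcal{O}$ is the subfamily of $C^4$-admissible domains of Definition~\ref{definition_Ck_admissible_domains} (so $k=4$), the state space is $V(\Omega(\alpha))=[C^{3}(\overline{\Omega(\alpha)})]^3$, the state problem is $\mathcal{P}(\alpha)$ of Definition~\ref{state_problem}, and the notions of convergence $\stackrel{\tilde{\mathcal{O}}}{\longrightarrow}$ and $\rightsquigarrow$ are those of Definitions~\ref{definition_Ck_set_convergence} and \ref{definition_Ck_function_convergence} with $q=3$. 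First I would record that, under the standing hypotheses $f\in[C^{1,\phi}(\overline{\Omega^{\textrm{ext}}})]^3$ and $g\in[C^{2,\phi}(\overline{\Omega^{\textrm{ext}}})]^3$, Theorem~\ref{theorem_Schauder_estimate_applied} together with Lemma~\ref{lemmaC2C1} guarantees that each $\mathcal{P}(\alpha)$ has a unique solution $u(\Omega(\alpha))\in V(\Omega(\alpha))$, which in fact lies in $[C^{3,\varphi}(\overline{\Omega(\alpha)})]^3$ with a bound uniform over $\mathcal{O}$ by Theorem~\ref{corollary_boundedness}. Hence the map $\Omega\mapsto u(\Omega)$ and its graph $\mathcal{G}=\{(\Omega,u(\Omega))\mid\Omega\in\mathcal{O}\}$ are well defined, as the abstract framework requires.

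Next I would check assumption~(i) of Theorem~\ref{theorem_optimum_shape_design}, the compactness of $\mathcal{G}$. This is exactly the content of Lemma~\ref{lemma_compactness_graph}: any sequence $(\Omega(\alpha_n),u_n)_{n\in\mathbb{N}}\subset\mathcal{G}$ admits a subsequence with $\Omega(\alpha_{n_k})\stackrel{\tilde{\mathcal{O}}}{\longrightarrow}\Omega(\alpha)$ and $u_{n_k}\rightsquigarrow u$ for some $\alpha\in U^{\rm ad}$ and $u=u(\Omega(\alpha))$, so that $(\Omega(\alpha),u)\in\mathcal{G}$. The only point to make explicit is that the convergence $\rightsquigarrow$ used there — $C^{3}$-convergence of the extensions $u^{\textrm{ext}}_{n_k}$ in $[C^{3}_0(\Omega^{\textrm{ext}})]^3$ — is precisely the one fixed in Definition~\ref{definition_Ck_function_convergence}, and that the extraction rests on the $\mathcal{O}$-uniform $C^{3,\varphi}$-bound (Theorem~\ref{corollary_boundedness}), the $\mathcal{O}$-uniform extension operator, and the compact embedding $C^{3,\varphi}\hookrightarrow C^{3}$ on $\Omega^{\textrm{ext}}$ via Arzel\`a--Ascoli.

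Then I would check assumption~(ii), the lower semi-continuity of $J=J_{\rm vol}+J_{\rm sur}$. Since $\mathcal{F}_{\rm vol},\mathcal{F}_{\rm sur}\in C^{0}(\mathbb{R}^d)$ and every shape in $\mathcal{O}$ is $C^4$-admissible, hence in particular $C^1$-admissible, Lemma~\ref{lemma_continuous_functional} applies and yields $J_{\rm vol}(\Omega_n,u_n)\to J_{\rm vol}(\Omega,u)$ and $J_{\rm sur}(\Omega_n,u_n)\to J_{\rm sur}(\Omega,u)$ whenever $\Omega_n\stackrel{\tilde{\mathcal{O}}}{\longrightarrow}\Omega$ and $u_n\rightsquigarrow u$. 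Adding the two limits shows that $J$ is in fact continuous along such sequences, and continuity trivially implies the required inequality $\liminf_{n\to\infty}J(\Omega_n,y_n)\geq J(\Omega,y)$; in particular no convexity of the integrands is needed. With both hypotheses of Theorem~\ref{theorem_optimum_shape_design} verified, that theorem delivers an optimal shape $\Omega^*\in\mathcal{O}$ with $J(\Omega^*,u(\Omega^*))\leq J(\Omega,u(\Omega))$ for all $\Omega\in\mathcal{O}$, which is the assertion.

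As for where the real difficulty lies: essentially all of it has already been absorbed into Section~\ref{section_schauder_estimates} and into Lemmas~\ref{lemma_compactness_graph} and \ref{lemma_continuous_functional}, so the argument above is only a bookkeeping matching of the abstract hypotheses to those statements. The genuine obstacle — the one motivating the whole paper — is that $J_{\rm sur}$ depends on $\nabla u$ (and higher derivatives) restricted to $\partial\Omega$, so compactness in assumption~(i) must hold in a topology strong enough (here $C^{3}$ up to the boundary, uniformly in $\Omega$) for those traces to pass to the limit; this is exactly why the uniform Schauder/regularity machinery and the uniform $C^{k,\varphi}$-extension operator were indispensable, and why the weak $H^1$-theory of \cite{Shape_Optimization_Haslinger} does not suffice. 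I would close by noting that, via Lemma~\ref{opt_reliability_is_shape_opt} and Proposition~\ref{properties_Weibull_model}, this result covers the optimal reliability problems for $r$-th order local crack initiation models with $0$-regular intensities and $r\le 3$, in particular the local probabilistic Weibull model for LCF and the deterministic optimal LCF-lifing problem.
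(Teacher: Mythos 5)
Your proposal is correct and follows essentially the same route as the paper: both rest on the graph compactness of Lemma~\ref{lemma_compactness_graph} and the continuity of $J$ from Lemma~\ref{lemma_continuous_functional}. The only cosmetic difference is that you formally invoke the abstract Theorem~\ref{theorem_optimum_shape_design}, whereas the paper directly runs the minimizing-sequence argument (extract a convergent subsequence, pass to the limit in $J$), which is just that theorem's proof written out in the concrete setting.
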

\begin{proof}
Let $(\alpha_n,u_n)_{n\in\mathbb{N}}$ be a minimizing sequence of
$\inf\{J(\Omega,u(\Omega)\,|\,\Omega\in\mathcal{O})\}$, where
$u_n=u(\Omega_n)=u(\Omega(\alpha_n))$ is the unique solution of
the state problem $\mathcal{P}(\alpha_n)$. Then, because of Lemma
\ref{lemma_compactness_graph} there exists a subsequence
$(\alpha_{n_k},u_{n_k})_{n_k\in\mathbb{N}}$ such that
$\Omega(\alpha_{n_k})\stackrel{\tilde{\mathcal{O}}}{\longrightarrow}\Omega(\alpha)$
and $u_{n_k}\rightsquigarrow u$ as $n_k\rightarrow\infty$ for some
$\alpha\in U^{\rm ad}$ and for the corresponding solution $u\in
V(\Omega(\alpha))$ of $\mathcal{P}(\alpha)$. Since all
prerequisites of Lemma \ref{lemma_continuous_functional} are
satisfied we obtain $J(\Omega_{n_k},u_{n_k})\rightarrow
J(\Omega,u)\quad\textrm{as }n_k\rightarrow\infty$. Because
$(\alpha_{n_k},u_{n_k})_{n_k\in\mathbb{N}}$ is also a minimizing
sequence of $\inf\{J(\Omega,u(\Omega)\,|\,\Omega\in\mathcal{O})\}$
the admissible shape $\Omega^*=\Omega(\alpha)$ is an optimal
shape.\qed
\end{proof}

Next, we apply the previous theorem to the cost functional from
Lemma \ref{opt_reliability_is_shape_opt} and we get
solutions to the optimal reliability problem as an easy corollary
to Theorem \ref{theorem_existence_optimal_shape}.

\begin{theorem}\label{theorem_optimal_reliable_design}\textbf{(Optimal Reliability)}\\
Let $\mathcal{O}$ be $C^4$-admissible domains as described in
Definition \ref{definition_Ck_admissible_domains} and let
$\mathscr{V}_{\rm vol}=C^{1,\phi}(\overline{\Omega^{\rm ext}})]^3$
and $\mathscr{V}_{\rm sur}=C^{2,\phi}(\overline{\Omega^{\rm
ext}})$. Let furthermore $\gamma$ be a local crack initiation
model as in Definition \ref{stress_models} with order $r\leq 3$
and $0$-regular intensity functions $\varrho_{\rm vol}$ and
$\varrho_{\rm sur}$. Let $T$ be failure time defined as the
formation of the first crack associated to $\gamma$. Let
furthermore $t^*\in\mathscr{T}$ be given.
\begin{itemize}
\item[(i)] Then, there exists at least one shape $\Omega^*\in{\cal O}$ that minimizes the probability of failure up to time $t^*$, confer Definition \ref{optimal_reliability_problem}.
\item[(ii)] In particular, this applies to the local Weibull model for LCF, confer Definition \ref{definition_local_probabilistic_LCF}.
\end{itemize}
\end{theorem}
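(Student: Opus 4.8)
The plan is to deduce both claims from Theorem~\ref{theorem_existence_optimal_shape} via the translation provided by Lemma~\ref{opt_reliability_is_shape_opt}. First I would check that, for the data at hand, $\gamma$ really is an $r$-th order local crack initiation model in the sense of Definition~\ref{stress_models}. With $\mathscr{V}_{\rm vol}=[C^{1,\phi}(\overline{\Omega^{\rm ext}})]^3$ and $\mathscr{V}_{\rm sur}=[C^{2,\phi}(\overline{\Omega^{\rm ext}})]^3$ the restricted loads satisfy $f\restriction_\Omega\in[L^{6/5}(\Omega)]^3$ and $g\restriction_{\partial\Omega_N}\in[L^{4/3}(\partial\Omega_N)]^3$, so Theorem~\ref{theorem_existence_weak_solution} yields a unique weak solution $u(\Omega)$ for every $\Omega\in\mathcal{O}$, and Theorem~\ref{theorem_Schauder_estimate_applied} together with Lemma~\ref{lemmaC2C1} upgrades this to $u(\Omega)\in[C^{3,\varphi}(\overline{\Omega})]^3\subset[C^3(\overline{\Omega})]^3$. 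Consequently the weak derivatives $\nabla^l u$ for $l=0,\dots,r$ (recall $r\le 3$) are genuine continuous functions whose traces on $\partial\Omega$ are well defined, so all standing hypotheses of Definition~\ref{stress_models} hold.

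Next, Lemma~\ref{opt_reliability_is_shape_opt} identifies the optimal reliability problem of Definition~\ref{optimal_reliability_problem} with the shape optimization problem of Definition~\ref{definition_cost_functional_SO_problem} for the cost functional $J(\Omega,y)=\int_\Omega\mathcal{F}_{\rm vol}\,dx+\int_{\partial\Omega}\mathcal{F}_{\rm sur}\,dA$, where $\mathcal{F}_{\rm vol/sur}(\cdot)=\int_0^{t^*}\varrho_{\rm vol/sur}(t,\cdot)\,dt$. Since the intensities are $0$-regular, i.e.\ in $C^0(\mathscr{T})\otimes C^0(\R^d)$, integration over the compact interval $[0,t^*]$ preserves continuity, so $\mathcal{F}_{\rm vol},\mathcal{F}_{\rm sur}\in C^0(\R^d)$; when $r<3$ one simply regards them as functions on $\R^d$ (with $d$ as in Definition~\ref{stress_models} for $r=3$) that ignore the higher-order slots, which is harmless since the $C^3$-regularity of $u$ makes those slots available. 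All hypotheses of Theorem~\ref{theorem_existence_optimal_shape} are then satisfied, and it produces $\Omega^*\in\mathcal{O}$ with $J(\Omega^*,u(\Omega^*))\le J(\Omega,u(\Omega))$ for every $\Omega\in\mathcal{O}$; transporting this inequality back through Lemma~\ref{opt_reliability_is_shape_opt} gives $P(T_{\Omega^*}\le t^*)\le P(T_\Omega\le t^*)$ for all $\Omega\in\mathcal{O}$, which is assertion~(i).

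For~(ii) I would invoke Proposition~\ref{properties_Weibull_model}: the local Weibull model for LCF of Definition~\ref{definition_local_probabilistic_LCF} is a surface- and strain-driven first-order crack initiation model whose intensity is $0$-regular by part~(ii) of that proposition. The only hypothesis still to be verified is $\|1/N_{\rm det}(\nabla u)\|_{L^m(\partial\Omega)}<\infty$; but $\nabla u$ is continuous on $\overline{\Omega}$ and $N_{\rm det}$ is continuous into $\R_+\cup\{\infty\}$ (with the convention $1/\infty=0$), so $x\mapsto 1/N_{\rm det}(\nabla u(x))$ is continuous, hence bounded, on the compact surface $\partial\Omega$, whence its $L^m$-norm is finite. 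Thus Proposition~\ref{properties_Weibull_model} guarantees that the associated measures $\rho_\Omega$ are Radon, so $\gamma$ is a genuine first-order local crack initiation model with $0$-regular intensities and part~(i) applies. The proof contains no real obstacle: the substantial work --- uniform Schauder estimates, compactness of the graph $\mathcal{G}$, continuity of the cost functionals --- has already been done in Sections~\ref{section_schauder_estimates} and~\ref{section_optimal_reliability}, and the only points requiring a moment's care are the bookkeeping of the argument dimension $d$ when $r<3$ and the passage from $0$-regular intensities to $C^0$-densities under the $t$-integration.
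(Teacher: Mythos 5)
Your proposal is correct and follows essentially the same route as the paper's own proof: part (i) is obtained by verifying the hypotheses of Theorem \ref{theorem_existence_optimal_shape} through the translation of Lemma \ref{opt_reliability_is_shape_opt}, and part (ii) by checking the $L^m$-finiteness condition of Proposition \ref{properties_Weibull_model} via continuity, hence boundedness, of $1/N_{\rm det}(\nabla u)$ on the compact boundary. You merely spell out in more detail (regularity of $u$, continuity of $\mathcal{F}_{\rm vol/sur}$, bookkeeping for $r<3$) what the paper's two-line proof leaves implicit.
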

\begin{proof}
(i) As all prerequisites of Theorem
\ref{theorem_existence_optimal_shape} are satisfied by Lemma
\ref{opt_reliability_is_shape_opt}  we directly obtain the
existence of an optimal shape $\Omega^*\in\mathcal{O}$ from
Theorem \ref{theorem_existence_optimal_shape}.

(ii) Confer Proposition \ref{properties_Weibull_model} and note
that for $\Omega\in{\cal O}$ the condition $\|\frac{1}{N_{\rm
det}(\nabla u(\Omega))}\|_{L^m(\partial \Omega)}<\infty$ is
fulfilled since $\frac{1}{N_{\rm det}(\nabla u(\Omega))}$ is
continuous and hence bounded on $\partial\Omega$. Now apply (i).
\qed
\end{proof}
%It is interesting to note that in the case (ii) in the above
%theorem, the optimal shape does not depend on the choice of $t^*$,
%which is by definition equivalent to $T_{\Omega}\preceq
%T_{\Omega^*}$ in stochastic order $\forall \Omega\in{\cal O}$.
%This can be seen as the solution to a stronger optimization
%problem as given in Definition \ref{optimal_reliability_problem}
%for this specific case.

The strong convergence properties underlying Theorem
\ref{theorem_existence_optimal_shape} can also be applied to cost
functionals that do not belong to the class
(\ref{cost_functionals_so}). As an example, one can also prove
existence of solutions to the deterministic optimal LCF lifing
problem.
\begin{theorem}\label{theorem_optimal_deterministic_design}\textbf{(Deterministic LCF and Shape Optimization)}\\
Under the setting of Theorem \ref{theorem_optimal_reliable_design}
there exists a solution $\Omega^*\in{\cal O}$ to the deterministic
optimal LCF lifing problem of Definition
\ref{definition_deterministic_LCF}.
\end{theorem}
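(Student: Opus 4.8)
The plan is to cast the deterministic optimal LCF lifing problem as an optimal shape design problem in the sense of Definition \ref{definition_cost_functional_SO_problem} and to apply the abstract existence result Theorem \ref{theorem_optimum_shape_design}, reusing the compactness statement of Lemma \ref{lemma_compactness_graph}. By Proposition \ref{properties_Weibull_model}(ii) together with Lemma \ref{PropertiesNi}, the map $\psi:\R^{3\times3}\to\R_+$, $\psi(M)=1/N_{\rm det}(M)$ with the convention $1/\infty=0$, is continuous; since moreover $N_{\rm det}$ is everywhere strictly positive (possibly $+\infty$), maximizing $T_{{\rm det},\Omega}=\inf\{N_{\rm det}(\nabla u(x))\,|\,x\in\partial\Omega\}$ over $\Omega\in\mathcal{O}$ is equivalent to minimizing the cost functional
$$J(\Omega,u)=\sup_{x\in\partial\Omega}\psi(\nabla u(x))=\Big\|\tfrac{1}{N_{\rm det}(\nabla u)}\Big\|_{L^\infty(\partial\Omega)}\in[0,\infty).$$
This $J$ is not of the integral form (\ref{cost_functionals_so}), but the strong $C^{3,\varphi}$-convergence established in Section \ref{section_schauder_estimates} is still enough to handle it.

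First I would fix the abstract setting exactly as in the proof of Theorem \ref{theorem_existence_optimal_shape}: the $C^4$-admissible families $\tilde{\mathcal{O}},\mathcal{O}$, the state spaces $V(\Omega(\alpha))=[C^{3}(\overline{\Omega(\alpha)})]^3$, the state problem $\mathcal{P}(\alpha)$ with its unique solution $u(\Omega(\alpha))$ (Theorem \ref{theorem_Schauder_estimate_applied}), and the convergences of Definitions \ref{definition_Ck_set_convergence} and \ref{definition_Ck_function_convergence}. Hypothesis (i) of Theorem \ref{theorem_optimum_shape_design}, the compactness of $\mathcal{G}$, is then precisely Lemma \ref{lemma_compactness_graph}.

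The only non-routine point is the lower semicontinuity of $J$, hypothesis (ii) of Theorem \ref{theorem_optimum_shape_design}. So let $\Omega(\alpha_n)\stackrel{\tilde{\mathcal{O}}}{\longrightarrow}\Omega(\alpha)$ and $u_n\rightsquigarrow u$, i.e.\ $\alpha_n\to\alpha$ in $C^{4}(\overline{\Omega}_{2d})$ and $u_n^{\rm ext}\to u^{\rm ext}$ in $[C^{3}_0(\Omega^{\rm ext})]^3$; in particular $\nabla u_n^{\rm ext}\to\nabla u^{\rm ext}$ uniformly on $\overline{\Omega^{\rm ext}}$. Since $\partial\Omega(\alpha)$ is compact and $x\mapsto\psi(\nabla u(x))$ is continuous on $\overline{\Omega(\alpha)}$ (as $u\in[C^{3,\varphi}(\overline{\Omega(\alpha)})]^3$), I would pick $x^*\in\partial\Omega(\alpha)$ with $\psi(\nabla u(x^*))=J(\Omega(\alpha),u)$ and then construct $x_n\in\partial\Omega(\alpha_n)$ with $x_n\to x^*$: if $x^*$ lies on the part of $\partial\Omega(\alpha)$ common to all admissible shapes (the inner sphere $\partial B(z,r)$ and the portion of $\partial\hat{\Omega}$ below $\alpha_{\min}$), take $x_n=x^*$; if $x^*=(s_1,s_2,\alpha(s_1,s_2))$ with $(s_1,s_2)\in\overline{\Omega}_{2d}$ lies on the part determined by $\alpha$, take $x_n=(s_1,s_2,\alpha_n(s_1,s_2))\in\partial\Omega(\alpha_n)\subset\overline{\Omega^{\rm ext}}$, so that $x_n\to x^*$ by the uniform convergence $\alpha_n\to\alpha$. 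Then
$$J(\Omega(\alpha_n),u_n)\ \ge\ \psi\big(\nabla u_n(x_n)\big)=\psi\big(\nabla u_n^{\rm ext}(x_n)\big)\ \longrightarrow\ \psi\big(\nabla u^{\rm ext}(x^*)\big)=\psi(\nabla u(x^*))=J(\Omega(\alpha),u)\quad(n\to\infty),$$
where the limit follows from the uniform convergence $\nabla u_n^{\rm ext}\to\nabla u^{\rm ext}$, the continuity of $\nabla u^{\rm ext}$, $x_n\to x^*$, and the continuity of $\psi$. Taking $\liminf$ gives $\liminf_n J(\Omega(\alpha_n),u_n)\ge J(\Omega(\alpha),u)$, which is hypothesis (ii). (Running the same argument with near-maximizers of $\psi(\nabla u_n(\cdot))$ on $\partial\Omega(\alpha_n)$ and the Hausdorff convergence $\partial\Omega(\alpha_n)\to\partial\Omega(\alpha)$ would even yield continuity of $J$, but lower semicontinuity is all that is needed.)

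Theorem \ref{theorem_optimum_shape_design} then produces $\Omega^*\in\mathcal{O}$ minimizing $J$, equivalently maximizing $T_{{\rm det},\Omega}$, which by the reformulation above is exactly a solution of the deterministic optimal LCF lifing problem of Definition \ref{definition_deterministic_LCF}. The step I expect to be the main obstacle is the construction and control of the moving trace points $x_n\in\partial\Omega(\alpha_n)$ converging to $x^*$: this is where the explicit epigraph parametrization of $C^4$-admissible domains and the uniform $C^{3,\varphi}$-bounds of Theorem \ref{corollary_boundedness} (together with the uniformly bounded extension operator) are essential, whereas the rest is routine verification within the framework of Theorem \ref{theorem_optimum_shape_design}.
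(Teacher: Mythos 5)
Your proposal is correct and follows essentially the same route as the paper: recast the lifing problem as minimization of a non-integral cost functional built from $T_{{\rm det},\Omega}$ (the paper uses $-\inf_{x\in\partial\Omega}N_{\rm det}(\nabla u(x))$, you use the equivalent $\sup_{x\in\partial\Omega}1/N_{\rm det}(\nabla u(x))$), take compactness from Lemma \ref{lemma_compactness_graph} as in Theorem \ref{theorem_existence_optimal_shape}, and obtain the needed (semi-)continuity from the uniform convergence in Definition \ref{definition_Ck_function_convergence}. Your explicit construction of boundary points $x_n\in\partial\Omega(\alpha_n)$ via the epigraph parametrization simply fills in the detail that the paper compresses into one sentence, so there is no substantive difference.
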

\begin{proof} Define the cost functional
%\begin{equation*}%\label{LCF_Design.1.2}
$J_{\rm
sur}(\Omega,u)=-\inf_{x\in\partial\Omega}N_{\textrm{det}}(\nabla
u(x))=-T_{{\rm det},\Omega}$
%\end{equation*}
on ${\cal O}\times [C^{3,\phi}(\overline{\Omega^{\rm ext}})]^3$ that needs to be minimized.
The proof of Theorem \ref{theorem_existence_optimal_shape} yields
all arguments needed except for the continuity property of
$J_{\textrm{sur}}$. But this follows from uniform convergence
required in Definition \ref{definition_Ck_function_convergence}.
\qed
\end{proof}

We have proven the existence of optimal designs for a general
class of cost functionals without convexity constraints and
including higher order derivatives of the state solutions. The
technically relevant case of optimal reliability for probabilistic
models of LCF and its deterministic counterpart is included. In
\cite{Gottschalk_Schmitz}, we address sensitivity analysis and the
existence of shape gradients in the probabilistic framework.

%We note that the deterministic approach
%associated with $T_{{\rm det},\Omega}$ is not a good candidate for
%shape differentiability in $\Omega$, since it is obtained taking
%the $\rm inf$ of predicted life times $N_{\rm det}(\nabla u)$ over
%all surface points, which is not a differentiable operation. Thus,
%besides experimental evidence, there are also good mathematical
%reasons to consider further reliability theory
%applied to the shape optimization of mechanical components.

\appendix
\section{Appendix}\label{appendix}

\begin{definition}\label{definition_hemisphere_property} \textbf{(Hemisphere Property, Hemisphere Transformation)} \cite{Agmon_Douglis_Nirenberg_2}\rm\\
Let $\Omega\subset\mathbb{R}^n$ be a domain, $\Gamma$ be a regular portion of
$\partial\Omega$ and let $\mathcal{A}\subset\Omega$ be a
subdomain such that $\partial\mathcal{A}\cap\partial\Omega$ is in
the interior of $\Gamma$ in the $n$-dimensional sense and let
$d>0$ be a positive constant. If every $x\in\mathcal{A}$ within a
distance $d$ of $\partial\Omega$ has a neighborhood $U_x$ with
$\overline{U}_x\cap\partial\Omega\subset\Gamma,\,B(x,d/2)\subset
U_x$ and
\begin{equation*}
\overline{U}_x\cap\overline{\Omega}=T_x(\Sigma_{R(x)}),\quad\overline{U}_x\cap\partial\Omega=T_x\left(F_{R(x)}\right),\quad
    0<R(x)\leq1
\end{equation*}
for some hemisphere $\Sigma_{R(x)}$ and functions $T_x,\,T_x^{-1}$
of some class $C^{k,\phi}$, $\mathcal{A}$ is said to satisfy a
hemisphere property. Moreover, the functions $T_x$ are called
hemisphere transformations.
\end{definition}

\begin{definition}\label{definition_Lipschitz_Cka_boundary_Gilbarg} \textbf{($C^{k,\phi}$-Boundary, Lipschitz Boundary)} \cite{Elliptic_PDE_Gilbarg}\rm\\
Let $\Omega\subset\mathbb{R}^n$ be a bounded domain, let
$k\in\mathbb{N}$ and $0\leq\phi\leq1$. The subset $\Omega$ has a
$C^{k,\phi}$-boundary if at each point $x_0\in\partial\Omega$
there exists $B=B(x_0,r)$ for some $r>0$ and an injective
$C^{k,\phi}$-map
%\begin{align*}
$\psi:B\rightarrow D\subset\mathbb{R}^n$
%\end{align*}
such that
$\psi(B\cap\Omega)\subset\mathbb{R}_+^n=\{x\in\mathbb{R}^n\,|\,x_n\geq0\}$,
$\psi(B\cap\partial\Omega)\subset\partial\mathbb{R}_+^n$ and
$\psi^{-1}\in C^{k,\phi}(D)$. If the maps $\psi$ are only
Lipschitz continuous $\Omega$ has a Lipschitz boundary.
\end{definition}

\begin{remark}\label{remark_Cka_boundary} \cite{Elliptic_PDE_Gilbarg}\rm\\
Let $\Omega$ be a bounded domain in $\mathbb{R}^n$ and let
$k\in\mathbb{N}$ and $0\leq\phi\leq1$. If every
$x_0\in\partial\Omega$ has a neighbourhood in which the boundary
is locally described by a graph of a $C^{k,\phi}$-function of
$n-1$ of the coordinates $x_1,\dots,x_n$ the domain $\Omega$ has a
$C^{k,\phi}$-boundary. Note that the converse is true if $k\geq1$.
\end{remark}

\begin{lemma}\label{appendix_function_spaces_theorem.5.1}\textbf{(Extension Lemma)} \cite{Elliptic_PDE_Gilbarg}, Part I, Section 6\\
Let $\Omega^{\textrm{ext}}\subset\mathbb{R}^n$ be open and let
$\Omega$ be a $C^{k,\phi}$-domain with
$\overline{\Omega}\subset\Omega^{\textrm{ext}}$, $k\geq1$ and
$0\leq\phi<1$. For $\phi=0$ it is $C^{k,0}=C^{k}$. If $u\in
C^{k,\phi}(\overline{\Omega})$ there is a function $w\in
C^{k,\phi}_0(\Omega^{\textrm{ext}})$ such that $w=u$ in
$\overline{\Omega}$ and
$\|w\|_{C^{k,\phi}(\Omega^{\textrm{ext}})}\leq
C\|u\|_{C^{k,\phi}(\Omega)}$ for a constant $C$ depending only on
$k,\Omega$ and $\Omega^{\textrm{ext}}$.
\end{lemma}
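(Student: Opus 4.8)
The plan is the classical one for H\"older extension: reduce to the model problem of extending across a flat boundary by a higher-order reflection, and then patch the local extensions together with a partition of unity. This is precisely the argument behind Lemma 6.37 of \cite{Elliptic_PDE_Gilbarg}, and it is in fact already reproduced inside the proof of Lemma \ref{lemma_compactness_graph} above, so the task here is mainly to organise it cleanly.

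First I would localise. Since $\partial\Omega$ is compact and of class $C^{k,\phi}$, by Remark \ref{remark_Cka_boundary} (here $k\geq1$) it can be covered by finitely many balls $B_1,\dots,B_M$ on each of which $\partial\Omega$ is the graph of a $C^{k,\phi}$-function of $n-1$ of the coordinates; I would add one further ball $B_0$ with $\overline{B_0}\subset\Omega$, and after shrinking the balls arrange that $\bigcup_{i=0}^{M}B_i$ contains $\overline{\Omega}$ and is contained in $\Omega^{\textrm{ext}}$. For $i\geq1$ let $\psi_i$ be the associated $C^{k,\phi}$-diffeomorphism flattening $\partial\Omega$, so that $\psi_i(B_i\cap\Omega)\subset\{y_n>0\}$ and $\psi_i(B_i\cap\partial\Omega)\subset\{y_n=0\}$, and recall the two-sided bound $K^{-1}\|x-y\|\leq\|\psi_i(x)-\psi_i(y)\|\leq K\|x-y\|$.

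Second, in the flattened picture I would extend $\tilde u_i:=u\circ\psi_i^{-1}$, which is $C^{k,\phi}$ on $\{y_n\geq0\}$, into $\{y_n<0\}$ by
\begin{equation*}
\tilde u_i(y',y_n)=\sum_{j=1}^{k+1}c_j\,\tilde u_i(y',-y_n/j),\qquad y_n<0,
\end{equation*}
where $c_1,\dots,c_{k+1}$ are the unique solution of the Vandermonde system $\sum_{j=1}^{k+1}c_j(-1/j)^m=1$ for $m=0,\dots,k$ (the nodes $-1/j$ being distinct). These conditions force every one-sided derivative of order $\leq k$ to match across $\{y_n=0\}$, so $\tilde u_i\in C^{k}$ on the full ball, and one then checks that each $k$-th derivative keeps a $\phi$-H\"older modulus controlled by that of $u$. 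Pulling back, $w_i:=\tilde u_i\circ\psi_i$ is a $C^{k,\phi}$-extension of $u$ into $\Omega\cup B_i$, and the chain rule together with the $C^{k,\phi}$-bounds on $\psi_i,\psi_i^{-1}$ and the two-sided estimate above give $\|w_i\|_{C^{k,\phi}(\Omega\cup B_i)}\leq C\|u\|_{C^{k,\phi}(\Omega)}$ with $C$ depending only on $k$ and $\Omega$; on $B_0$ one simply takes $w_0=u$.

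Finally I would glue: choose a partition of unity $\{\eta_i\}_{i=0}^{M}$ subordinate to $\{B_i\}_{i=0}^{M}$ with $\sum_i\eta_i\equiv1$ near $\overline{\Omega}$ and each $\eta_i$ compactly supported in $\Omega^{\textrm{ext}}$, and set $w=\sum_{i=0}^{M}\eta_i w_i$. Then $w\in C^{k,\phi}_0(\Omega^{\textrm{ext}})$; $w=u$ on $\overline{\Omega}$ since every $w_i$ agrees with $u$ there and the $\eta_i$ sum to one; and, as the covering, the diffeomorphisms $\psi_i$ and the cutoffs $\eta_i$ are fixed once and for all, $\|w\|_{C^{k,\phi}(\Omega^{\textrm{ext}})}\leq C\|u\|_{C^{k,\phi}(\Omega)}$ with $C=C(k,\Omega,\Omega^{\textrm{ext}})$; the case $\phi=0$ is contained in the same argument. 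The main obstacle is the one genuinely analytic step in the middle paragraph: verifying that the reflection formula yields a function that is $C^{k,\phi}$ — and not merely $C^{k}$ — across the flattened boundary, i.e. that the top-order derivatives remain $\phi$-H\"older with a constant bounded in terms of that of $u$; everything else is bookkeeping with the chain rule and the partition of unity.
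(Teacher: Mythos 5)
Your argument is exactly the one the paper relies on: the lemma is quoted from Gilbarg--Trudinger (Lemma 6.37), and its construction -- flattening diffeomorphisms $\psi$, the higher-order reflection $\tilde u(y',y_n)=\sum_{j=1}^{k+1}c_j\tilde u(y',-y_n/j)$ with the Vandermonde conditions $\sum_j c_j(-1/j)^m=1$, then a finite covering and partition of unity -- is precisely what is recapitulated in the proof of Lemma \ref{lemma_compactness_graph}. Your proposal is correct and takes essentially the same route, with the remaining H\"older verification for the top-order derivatives being the content of the cited reference, which you correctly identify as the only genuinely analytic step.
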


{\small \noindent \textbf{Acknowledgements:} This work has been supported
by the German federal ministry of economic affairs BMWi via an AG
Turbo grant. We thank the gas turbine technology department of the
Siemens AG for stimulating discussions, and particularly Dr. Georg
Rollmann for his many helpful and constructive suggestions.
Prof. Dr. Rolf Krause (USI Lugano) and the
Institute of Computational Science (ICS Lugano) also supported
this work.  Many suggestions of the referees helped to improve this article.}
\addcontentsline{toc}{chapter}{Bibliography}

\end{document}